\title{Variants of the Gy\'arf\'as-Sumner Conjecture:\\Oriented Trees and Rainbow Paths}
\author
{
  \normalsize Manu Basavaraju\thanks{\scriptsize Department of Computer Science and Engineering, National Institute of Technology Karnataka, Surathkal, India. E-mail: \texttt{manub@nitk.ac.in}}
  \and
  \normalsize L. Sunil Chandran\thanks{\scriptsize Department of Computer Science and Automation, Indian Institute of Science, Bangalore, India.
  E-mail: \texttt{sunil@iisc.ac.in}}
  \and
  \normalsize Mathew C. Francis\thanks{\scriptsize Computer Science Unit, Indian Statistical Institute, Chennai, India. E-mail: \texttt{mathew@isichennai.res.in}}
  \and
  \normalsize Karthik Murali\thanks{\scriptsize School of Computer Science, Carleton University, Ottawa, Canada. E-mail: \texttt{karthikmurali@cmail.carleton.ca}}
}
\theoremstyle{plain}
\newtheorem{theorem}{Theorem}
\theoremstyle{plain}
\newtheorem{lemma}[theorem]{Lemma}
\theoremstyle{plain}
\theoremstyle{definition}
\newtheorem{proposition}{Proposition}
\theoremstyle{plain}
\newtheorem{corollary}{Corollary}
\theoremstyle{remark}
\newtheorem{comment}{Remark}
\theoremstyle{plain}
\theoremstyle{plain}
\newtheorem{conjecture}{Conjecture}
\theoremstyle{plain}
\theoremstyle{definition}
\newtheorem{definition}{Definition}
\newcommand{\com}[1]{}
\newcommand{\st}{\ensuremath{\mathrm{\mathsf{st}}}}
\date{\vspace{-5ex}}
\begin{document}

\maketitle

\begin{abstract}
Given a finite family $\mathcal{F}$ of graphs, we say that a graph $G$ is ``$\mathcal{F}$-free'' if $G$ does not contain any graph in $\mathcal{F}$ as a subgraph. We abbreviate $\mathcal{F}$-free to just ``$F$-free'' when $\mathcal{F}=\{F\}$.
A vertex-colored graph \(H\) is called ``rainbow'' if no two vertices of \(H\) have the same color. Given an integer \(s\) and a finite family of graphs \(\mathcal{F}\), let $\ell(s,\mathcal{F})$ denote the smallest integer such that any properly vertex-colored $\mathcal{F}$-free graph $G$ having $\chi(G)\geq \ell(s,\mathcal{F})$ contains an induced rainbow path on $s$ vertices. Scott and Seymour showed that \(\ell(s,K)\) exists for every complete graph $K$. A conjecture of N.~R.~Aravind states that $\ell(s,C_3)=s$. The upper bound on $\ell(s,C_3)$ that can be obtained using the methods of Scott and Seymour setting $K=C_3$ are, however, super-exponential. Gy\'arf\'as and S\'ark\"ozy showed that \(\ell(s,\{C_3,C_4\}) = \mathcal{O}\big((2s)^{2s}\big)\). For $r\geq 2$, we show that \(\ell(s,K_{2,r}) \leq (r-1)(s-1)(s-2)/2+s\) and therefore, \(\ell(s,C_4) \leq \frac{s^2-s+2}{2}\). This significantly improves Gy\'arf\'as and S\'ark\"ozy's bound and also covers a bigger class of graphs. We adapt our proof to achieve much stronger upper bounds for graphs of higher girth: we prove that \(\ell(s,\{ C_3,C_4,\ldots,C_{g-1}\})\leq s^{1+\frac{4}{g-4}}\), where $g\geq 5$. Moreover, in each case, our results imply the existence of at least \(s!/2\) distinct induced rainbow paths on \(s\) vertices. Along the way, we obtain some new results on an oriented variant of the Gy\'arf\'as-Sumner conjecture. For $r\geq 2$, let $\mathcal{B}_r$ denote the orientations of $K_{2,r}$ in which one vertex has out-degree or in-degree $r$. We show that every $\mathcal{B}_r$-free oriented graph having chromatic number at least $(r-1) (s-1) (s-2) + 2s +1$ and every bikernel-perfect oriented graph with girth $g\geq 5$ having chromatic number at least $2s^{1+\frac{4}{g-4}}$ contains every oriented tree on at most $s$ vertices as an induced subgraph.
\bigskip

\noindent \textbf{Keywords:} Induced rainbow paths, Induced oriented trees, chromatic number, Gy\'arf\'as-Sumner conjecture, Gallai-Roy-Vitaver theorem.
\end {abstract}

\section{Introduction}\label{sec:introduction}
All the graphs considered in this paper are simple and finite. They are also considered to be undirected unless otherwise mentioned. Please refer to~\cite{Diestel} for standard graph theoretic terminology and notation that is not defined in this paper. Given a (directed or undirected) graph \(G=(V,E)\), its vertex set and edge set are denoted by \(V(G)\) and \(E(G)\), respectively. Given a directed graph $G$, we let $\hat{G}$ denote the underlying undirected graph of $G$: more precisely, $V(\hat{G})=V(G)$ and $E(\hat{G})=\{uv\colon (u,v)\in E(G)$ or $(v,u)\in E(G)\}$. If $G$ is an undirected graph, then we define $\hat{G}=G$. If \(\mathcal{F}\) is a finite set of (directed) graphs, a (directed) graph \(G\) is said to be \(\mathcal{F}\)-free if there is no subgraph in \(G\) which is isomorphic to a member of \(\mathcal{F}\). If \(\mathcal{F}\) is a singleton with the element \(F\), we shall simply write \(F\)-free instead of \(\{F\}\)-free.
If $\mathcal{F}$ is a finite family of undirected graphs, a directed graph $G$ is said to be $\mathcal{F}$-free if $\hat{G}$ is $\mathcal{F}$-free.

All the colorings considered in this paper are proper vertex colorings---i.e. they are assignments of colors to the vertices such that no two adjacent vertices get the same color. The sets of colors that we use shall always be subsets of natural numbers. The minimum number of colors required to properly color a graph \(G\) is called its \emph{chromatic number}, denoted by \(\chi(G)\). A proper coloring which uses exactly \(\chi(G)\) colors is called an \emph{optimal coloring}. Given a properly colored graph \(G\), a subgraph \(H\) of \(G\) is said to be \emph{rainbow} if no two vertices of \(H\) have the same color. Additionally, if the subgraph \(H\) is induced, then \(H\) is said to be an \emph{induced rainbow} subgraph of \(G\). A \(t\)-path is a path on \(t\) vertices. A cycle of length \(n\) is denoted by \(C_n\) (the length of a path or cycle is the number of edges in it). A complete graph on $n$ vertices is denoted by $K_n$.

If $G$ is a directed graph, then a path in $G$ simply refers to a path in $\hat{G}$ and a cycle in $G$ simply refers to a cycle in $\hat{G}$. The \emph{girth} of a (directed or undirected) graph \(G\), denoted by \(g(G)\), is the minimum length of a cycle in $G$. A \emph{directed path} in a directed graph $G$ is a sequence of pairwise distinct vertices $v_1,v_2,\ldots,v_t$ such that for $1\leq i<t$, $(v_i,v_{i+1})\in E(G)$.  A \emph{directed cycle} in a directed graph $G$ is a sequence of vertices $v_1,v_2,\ldots,v_t,v_1$ such that $v_1,v_2,\ldots,v_t$ is a directed path in $G$ and $(v_t,v_1)\in E(G)$. An \emph{orientation} of an undirected graph \(G\) is a directed graph obtained by assigning a direction to each edge in \(E(G)\). An \emph{acyclic orientation} of a graph is one in which there are no directed cycles. A directed graph $G$ is said to be an \emph{oriented graph} if it is an orientation of $\hat{G}$; in other words, it is a directed graph with no directed cycle of length 2.
An \emph{oriented tree} is an orientation of a tree. Let \(T\) be an oriented tree with a vertex \(r \in V(T)\) designated as the root. If for each vertex \(v \in V(T)\), there is a directed path starting from \(r\) and ending at \(v\), then \(T\) is called an \emph{out-tree}. On the other hand, if for each vertex \(v \in V(T)\), there is a directed path starting from \(v\) and ending at \(r\), then \(T\) is called an \emph{in-tree}. A directed graph without any directed cycles is called an \emph{directed acyclic graph}. There is a linear ordering of the vertices of every directed acyclic graph, called a \emph{topological ordering} of the graph, such that for every edge $(u,v)$ in the graph, the vertex $u$ comes before the vertex $v$ in the ordering. A \emph{kernel} of a directed graph $G$ is an independent set $S$ of $G$ having the additional property that for every vertex $u\in V(G)\setminus S$, there exists a vertex $v\in S$ such that $(u,v)\in E(G)$. Similarly, an \emph{antikernel} of $G$ is an independent set $S\subseteq V(G)$ having the property that for every vertex $u\in V(G)\setminus S$, there exists a vertex $v\in S$ such that $(v,u)\in E(G)$ (note that some authors use the terms ``kernel'' and ``antikernel'' with their meanings interchanged). A directed graph is called \emph{bikernel-perfect} if every induced subgraph of it has both a kernel and an antikernel. It is well known that directed acyclic graphs are all bikernel-perfect~\cite{richardson}.
\section{Preliminaries}
Erd\H{o}s~\cite{ErdosProbability} proved that given any integer \(k\), there exist graphs with girth more than \(k\) and arbitrarily large chromatic number. This is equivalent to the following statement.

\begin{theorem}[Erd\H{o}s~\cite{ErdosProbability}]\label{thm:erdos}
If $\mathcal{F}$ is a finite family of graphs, none of which is a forest, then for every integer $c$, there exists an $\mathcal{F}$-free graph $G$ having $\chi(G)\geq c$.
\end{theorem}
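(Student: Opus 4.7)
The plan is to reduce the statement to the classical Erd\H{o}s result on high-girth, high-chromatic graphs, which is essentially what is cited just above the theorem. The only non-trivial point is translating the hypothesis ``no member of $\mathcal{F}$ is a forest'' into a single numerical girth bound that all of $\mathcal{F}$ respects.

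First, I would fix the family $\mathcal{F}=\{F_1,\ldots,F_m\}$ and an integer $c$. Since no $F_i$ is a forest, each $F_i$ contains at least one cycle, so the girth $g(F_i)$ is a well-defined finite integer. Because $\mathcal{F}$ is finite, I can set
\[
k \;=\; \max_{1\leq i \leq m} g(F_i),
\]
which is also a finite integer. This is the only new observation beyond Erd\H{o}s's classical probabilistic construction.

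Next, I would invoke Erd\H{o}s's theorem (the statement paraphrased just before the theorem) to produce a graph $G$ with girth strictly greater than $k$ and with $\chi(G)\geq c$. I claim this $G$ is $\mathcal{F}$-free. Indeed, suppose for contradiction that some $F_i\in\mathcal{F}$ occurs as a subgraph of $G$. Then $F_i$, having a cycle of length $g(F_i)\leq k$, would force $G$ to contain a cycle of length at most $k$, contradicting $g(G)>k$. Hence $G$ is $\mathcal{F}$-free and satisfies $\chi(G)\geq c$, completing the proof.

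I do not expect any real obstacle here: the statement is essentially a packaging of Erd\H{o}s's classical theorem, with the finiteness of $\mathcal{F}$ used to replace ``each forbidden graph has a cycle'' by ``all forbidden graphs have girth at most $k$'' for a single uniform $k$. The only subtlety worth stating explicitly in a clean writeup is that the girth bound is preserved under taking subgraphs, which is what allows the girth-based exclusion to kill every $F_i$ simultaneously.
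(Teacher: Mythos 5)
Your proof is correct and is exactly the translation the paper has in mind: the paper presents the theorem as "equivalent" to Erd\H{o}s's girth/chromatic-number result without spelling out the details, and your argument (take $k=\max_i g(F_i)$, invoke Erd\H{o}s for girth $>k$ and $\chi\geq c$, and note any copy of some $F_i$ would force a short cycle) supplies precisely that reduction. No issues.
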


It is folklore that every graph $G$ contains every tree (and hence every forest) on $\chi(G)$ vertices as a subgraph. In fact, every graph having minimum degree $k-1$ contains every tree on $k$ vertices as a subgraph. 
Now combining this with Theorem~\ref{thm:erdos}, we get the following.

\begin{proposition}\label{prop:boundedchi}
Let $\mathcal{F}$ be a finite family of graphs. There exists a constant $c$ such that $\chi(G)<c$ for every $\mathcal{F}$-free graph $G$ if and only if $\mathcal{F}$ contains a forest.
\end{proposition}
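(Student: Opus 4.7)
The plan is to prove the two directions of the biconditional separately, each as a direct consequence of one of the two results recalled in the preceding paragraph, namely Theorem~\ref{thm:erdos} and the folklore fact that a graph $G$ contains every tree (hence every forest) on $\chi(G)$ vertices as a subgraph.

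For the ``if'' direction, suppose $\mathcal{F}$ contains a forest $F$, and let $n=|V(F)|$. I would take the constant to be $c=n$ and verify that every $\mathcal{F}$-free graph $G$ satisfies $\chi(G)<n$. Indeed, any $\mathcal{F}$-free graph $G$ is in particular $F$-free; if $\chi(G)\geq n$, the folklore fact produces every tree on $n$ vertices as a subgraph of $G$, and since $F$ embeds in any tree on the same vertex set obtained by adding edges that join distinct components of $F$, this forces $F\subseteq G$, a contradiction. Hence $\chi(G)<n=c$.

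For the ``only if'' direction, I would argue contrapositively: assume $\mathcal{F}$ contains no forest, so every member of $\mathcal{F}$ contains a cycle and is therefore not a forest. Theorem~\ref{thm:erdos} then applies verbatim to $\mathcal{F}$ and yields, for every integer $c$, an $\mathcal{F}$-free graph $G$ with $\chi(G)\geq c$. This directly contradicts the existence of any constant bounding $\chi$ over the class of $\mathcal{F}$-free graphs.

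There is no real obstacle here; the statement is essentially the formal repackaging of the two preceding results. The only subtlety worth spelling out is the passage from ``tree'' to ``forest'' in the folklore fact, which is handled by the one-line observation above that a forest on $n$ vertices is a subgraph of some tree on the same vertex set. Once this is noted, both directions are immediate.
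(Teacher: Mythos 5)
Your proof is correct and follows the same route the paper intends: the ``if'' direction from the folklore fact that a graph with large chromatic number contains every tree (hence every forest) on that many vertices, and the ``only if'' direction from Theorem~\ref{thm:erdos}. The paper leaves this combination implicit, and your explicit handling of the forest-to-tree step is the only detail it glosses over.
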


\subsection{The Gy\'arf\'as-Sumner Conjecture}\label{sec:gyarfassumner}

The \emph{clique number} $\omega(G)$ of a graph $G$ is the maximum size of a clique in $G$.
For any graph \(G\), a natural lower bound for its chromatic number \(\chi(G)\) is its clique number \(\omega(G)\). There are several constructions in the literature that show that there exist triangle-free graphs having arbitrarily large chromatic number (for example, see Mycielski's construction in \cite{DouglasWest}), which means that
\(\chi(G)\) can be arbitrarily large compared to \(\omega(G)\). Thus, it is interesting to study those classes of graphs for which \(\chi(G)\) can be bounded above by a function of \(\omega(G)\). Formally, a class of graphs is said to be \(\chi\)-\textit{bounded} if there exists a function \(f\) such that for every graph \(G\) in the class, \(\chi(G) < f(\omega(G))\). We phrase this in slightly different language: for a family $\mathcal{G}$ of graphs and a complete graph $K$, let $f(\mathcal{G},K)$ be the smallest integer such that every $K$-free graph $G$ in $\mathcal{G}$ has $\chi(G)<f(\mathcal{G},K)$. Thus, a class of graphs $\mathcal{G}$ is $\chi$-bounded if and only if $f(\mathcal{G},K)$ exists for every complete graph $K$. Clearly, for a positive integer $t$, $f(\mathcal{G},K_t)$ exists only if $f(\mathcal{G},K_i)$ exists for each $i<t$. From the existence of constructions like that of Mycielski's, we know that the class of all graphs, in fact, even the class of triangle-free graphs, is not \(\chi\)-bounded---i.e., if we denote the class of all graphs by $\mathcal{G}$, then even $f(\mathcal{G},K_3)$ does not exist.

We denote by $\mathcal{G}_A$ the class of all graphs that do not contain the graph $A$ as an induced subgraph. For brevity, we abuse notation and let $f(A,K)$ denote $f(\mathcal{G}_A,K)$. Now we can think of $f(A,K)$ in the following way: $f(A,K)$ is the smallest integer such that every $K$-free graph $G$ having $\chi(G)\geq f(A,K)$ contains $A$ as an induced subgraph, or put in other words, every graph that has chromatic number at least $f(A,K)$ contains either $A$ as an induced subgraph or $K$ as a subgraph.
Since the class of all graphs is not $\chi$-bounded, it is natural to ask the following question: Does there exist a graph \(A\) such that $\mathcal{G}_A$ is \(\chi\)-bounded? In other words, is there a graph $A$ such that $f(A,K)$ exists for every complete graph $K$? Clearly, there are such graphs---for example, a $K_2$---and they are called \emph{\(\chi\)-bounding graphs}. Even though $K_2$ is \(\chi\)-bounding, Mycielski's construction shows that \(K_3\) is not. In fact, it follows from Theorem~\ref{thm:erdos} that no \(\chi\)-bounding graph can have a cycle. Thus, the only possible \(\chi\)-bounding graphs are forests.
Gy\'arf\'as~\cite{GyarfasRamsey} and Sumner~\cite{SumnerSubtrees} independently conjectured that the converse must also be true, i.e. all forests are \(\chi\)-bounding. In fact, as noted in~\cite{Gyarfas1987Problems}, since every forest is an induced subgraph of some tree, this is equivalent to saying that all trees are $\chi$-bounding.

\begin{conjecture}[Gy\'arf\'as-Sumner]
For every tree $T$ and complete graph $K$, $f(T,K)$ exists. That is, there exists an integer $f(T,K)$ such that all \(K\)-free graphs \(G\) with \(\chi(G) \geq f(T,K)\) contain \(T\) as an induced subgraph.
\end{conjecture}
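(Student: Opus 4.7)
The statement to prove is the Gy\'arf\'as-Sumner conjecture itself, which is notoriously open; what follows is a plan in the spirit of the question, along with the obstruction that has defeated every attempt to date. The natural setup is induction on $|V(T)|$. The base case $T=K_2$ holds trivially since any graph with $\chi(G)\geq 2$ contains an edge. For the inductive step, I would pick a leaf $\ell\in T$ with neighbor $v$, set $T'=T-\ell$, and try to show that in a $K$-free graph $G$ of sufficiently large chromatic number one can find an induced copy of $T'$ together with an extra vertex $u$ adjacent to the image of $v$ but non-adjacent to the images of the other vertices of $T'$. The hope would be to take $f(T,K)$ to be some explicit function of $f(T',K)$ and $|V(K)|$.

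The classical execution of this plan, due to Gy\'arf\'as when $T$ is a path, proceeds via a BFS-style decomposition: fix an optimal coloring of $G$ and partition $V(G)$ into levels $L_0,L_1,\ldots$ produced by breadth-first search from an appropriately chosen vertex. A pigeonhole argument shows that for some $i$, some component of $G[L_i]$ inherits a substantial fraction of $\chi(G)$, and iterating extracts a long induced path with the required attachments to lower levels. This gives $f(P_s,K_t)\leq (t-1)^{s-1}$ cleanly, because extending a partial path by one vertex only requires pushing chromatic number into the neighborhood of its last vertex---there is no branching to accommodate.

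The main obstacle, and the reason the conjecture has resisted a solution in full generality, is branching. For a vertex $v\in T$ of degree $d$, completing the embedding requires finding $d$ rooted subtrees embedded simultaneously into the neighborhood of a single vertex of $G$, pairwise non-adjacent except at that common vertex, with the induced-subgraph constraint controlling adjacencies back to the already-embedded portion. In a $K_t$-free graph, the neighborhood of a high-chromatic vertex need not itself have large chromatic number---in a triangle-free graph it is an independent set---so one cannot simply recurse inside $N(v)$. Any successful argument must therefore be genuinely global, tracking which vertices of $G$ ``see'' a partial embedding and reserving enough chromatic richness for all $d$ extensions at once. The strongest partial results (Kierstead-Penrice for radius-$2$ trees, Kierstead-Zhu for certain radius-$3$ trees, Scott for subdivisions of trees, and the ongoing program of Chudnovsky, Scott, and Seymour) each bypass this difficulty by imposing extra structural hypotheses on $T$ or on $G$, but none of them unify into a single argument.

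Accordingly, I would not expect to resolve the conjecture, and any honest plan has to be incremental. A reasonable first target is brooms and bounded-depth spiders, where the Gy\'arf\'as path argument can be combined with a pigeonhole over the branches attached at the central vertex. A second target is to look for a refinement of the chromatic number---perhaps a ``levelled'' invariant that simultaneously controls branching depth and per-level chromatic contribution---which might behave well enough under deletion of a leaf to power the induction cleanly. I expect the genuine obstruction to lie in finding such an invariant, since it must capture exactly the information that $\chi$ alone is too coarse to record.
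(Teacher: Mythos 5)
You have correctly identified that the statement you were given is not a theorem of the paper at all: it is the Gy\'arf\'as--Sumner conjecture, stated as Conjecture~1 purely as background, and the paper offers no proof of it (none exists). So there is no proof to compare yours against, and your proposal rightly refrains from claiming one; as an assessment of the difficulty it is essentially accurate. One small correction: Gy\'arf\'as's bound for paths has the clique number in the exponent and the path length in the base, i.e.\ roughly $\chi(G)\leq (s-1)^{t-2}$ for $P_s$-free, $K_t$-free graphs, not $(t-1)^{s-1}$ as you wrote.

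It is worth noting how your sketch relates to what the paper actually does. The paper's contributions are exactly the kind of relaxations you gesture at in your last paragraph: it replaces $K$-freeness by stronger sparsity hypotheses ($K_{2,r}$-freeness, $\mathcal{B}_r$-freeness, or girth at least $5$), under which the branching obstruction you describe can be controlled. In particular, Lemma~\ref{lem:oriented-gyarfas} carries out precisely your leaf-deletion induction --- remove all out-leaves (or in-leaves) $L$ of $T$, embed $T-L$ inductively, then re-attach the leaves one at a time --- and the step you identify as the crux (finding an attachment vertex adjacent to the image of the parent but to nothing else already embedded) goes through there because $\mathcal{B}_r$-freeness bounds, by a counting argument over the sets $W_x=N[x]\cap N^+(u')$, how many out-neighbours of the parent's image can ``see'' the partial embedding. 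In the general $K$-free setting no such counting is available, which is exactly the global bookkeeping problem you point to; so your diagnosis of where the induction breaks matches the reason the paper works only in these restricted classes.
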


Note that if the above conjecture is true, then for any positive integer $s$ and complete graph $K$, there exists a number $f(s,K)$ such that every $K$-free graph $G$ having $\chi(G)\geq f(s,K)$ contains every tree on $s$ vertices as an induced subgraph---in fact, $f(s,K)=\max\{f(T,K)\colon T$ is a tree on $s$ vertices$\}$. Despite considerable work, this conjecture is known to be true only for simple types of trees (see \cite{ScottSeymourSurvey} for a complete list of trees for which the conjecture has been solved). It is not even known whether the conjecture holds for triangle-free graphs; i.e. whether or not $f(s,K_3)$ exists for every integer $s$.

Generalizing the above notation, for a finite family $\mathcal{F}$ of graphs and a graph $A$, we can define $f(A,\mathcal{F})$ to be the least integer such that every $\mathcal{F}$-free graph $G$ having $\chi(G)\geq f(A,\mathcal{F})$ contains $A$ as an induced subgraph. By Proposition~\ref{prop:boundedchi}, we have that $f(A,\mathcal{F})$ exists whenever $\mathcal{F}$ contains a forest, irrespective of the graph $A$. Since our aim is to study those graphs $A$ that appear as induced subgraphs in $\mathcal{F}$-free graphs when their chromatic number becomes sufficiently large, we shall limit our study to those families $\mathcal{F}$ that contain no forest. Then from Theorem~\ref{thm:erdos}, it follows that $f(A,\mathcal{F})$ exists only if $A$ is a forest. As before, it can be seen that $f(A,\mathcal{F})$ exists for all forests $A$ if and only if $f(T,\mathcal{F})$ exists for all trees $T$. Following our earlier notation, we define $f(s,\mathcal{F})=\max\{f(T,\mathcal{F})\colon T$ is a tree on $s$ vertices$\}$.

Some versions of the Gy\'arf\'as-Sumner conjecture obtained by relaxing the condition of $K$-freeness to $\mathcal{F}$-freeness, where $\mathcal{F}$ is some finite family of undirected graphs, are known to be true. In other words, $f(s,\mathcal{F})$ exists for some families $\mathcal{F}$ of graphs. For example, as the following theorem shows, even though it is unknown whether $f(s,C_3)$ exists (note that $C_3=K_3$), we know that $f(s,\{C_3,C_4\})$ exists.

\begin{theorem}[Gy\'arf\'as, Szemeredi, Tuza~\cite{GyarfasSzemerediTuza}]\label{thm:gst}
Every $\{C_3,C_4\}$-free graph $G$ with $\chi(G)\geq s$ contains every tree on $s$ vertices as an induced subgraph. In other words, $f(s,\{C_3,C_4\})\leq s$.
\end{theorem}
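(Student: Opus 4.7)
The plan is to pass to a $\chi$-critical subgraph of $G$ and then embed $T$ one vertex at a time, using $\{C_3,C_4\}$-freeness to keep the set of forbidden candidates small at each step.

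Step 1 (reduction). Pass from $G$ to a vertex-$\chi$-critical subgraph $H$; a standard argument gives $\chi(H)\geq s$ and $\delta(H)\geq \chi(H)-1\geq s-1$. The graph $H$ inherits $\{C_3,C_4\}$-freeness, which we use in the following two shapes: no two adjacent vertices of $H$ share a common neighbor (triangle-freeness), and any two distinct vertices of $H$ share at most one common neighbor ($C_4$-freeness).

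Step 2 (ordering and greedy embedding). Root $T$ arbitrarily and list its vertices $v_1,v_2,\ldots,v_s$ in an order (e.g.\ BFS or DFS from the root) in which each $v_i$ with $i\geq 2$ has a unique $T$-neighbor among $\{v_1,\ldots,v_{i-1}\}$; call it $p(v_i)$. We build an embedding $\phi\colon V(T)\to V(H)$ maintaining the invariant that $\phi$ restricted to $\{v_1,\ldots,v_i\}$ is an induced copy of $T[\{v_1,\ldots,v_i\}]$. Set $\phi(v_1)$ to be any vertex of $H$. Having embedded $v_1,\ldots,v_{i-1}$, we choose $\phi(v_i)\in N_H(\phi(p(v_i)))$ so that $\phi(v_i)\notin\{\phi(v_1),\ldots,\phi(v_{i-1})\}$ and $\phi(v_i)$ is not $H$-adjacent to any $\phi(v_j)$ with $j<i$ and $v_j\neq p(v_i)$.

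Step 3 (the counting). The key claim is that such a $\phi(v_i)$ always exists. We have $|N_H(\phi(p(v_i)))|\geq s-1$. For each $j<i$ with $v_j\neq p(v_i)$, I claim at most one vertex of $N_H(\phi(p(v_i)))$ is forbidden on account of $v_j$. Indeed, if $\phi(v_j)\in N_H(\phi(p(v_i)))$ then triangle-freeness prevents any common neighbor of $\phi(v_j)$ and $\phi(p(v_i))$, so only $\phi(v_j)$ itself is forbidden by $v_j$; if $\phi(v_j)\notin N_H(\phi(p(v_i)))$ then $C_4$-freeness gives at most one common neighbor, yielding a single forbidden candidate. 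Summing over $j$, the forbidden set has size at most $i-2\leq s-2<s-1$, so a valid $\phi(v_i)$ exists, and the invariant is preserved because $p(v_i)$ is the only $T$-neighbor of $v_i$ already placed. After $s$ steps we have the desired induced copy of $T$.

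The main thing to get right is the per-$v_j$ bound of one in Step 3: a naive application of $C_4$-freeness alone would give a bound of two, blowing the final threshold up to roughly $2s$. The clean bound of one needs both triangle-freeness (in the case $\phi(v_j)\in N_H(\phi(p(v_i)))$) and $C_4$-freeness (in the complementary case), which is precisely where $\{C_3,C_4\}$-freeness of $G$ is used, and yields $f(s,\{C_3,C_4\})\leq s$.
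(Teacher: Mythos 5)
Your proof is correct. Note that the paper does not itself prove Theorem~\ref{thm:gst}; it is cited as a known result of Gy\'arf\'as, Szemer\'edi and Tuza, so there is no in-paper proof to compare against. Your argument is the standard one: pass to a subgraph with $\delta(H)\geq s-1$, order $T$ so each new vertex has exactly one earlier neighbour, and greedily extend an induced embedding by choosing an unblocked vertex in $N_H(\phi(p(v_i)))$. The crux is exactly the per-$v_j$ bound of one forbidden candidate, which you correctly split into the two cases ($\phi(v_j)$ a neighbour of $\phi(p(v_i))$ versus not), using triangle-freeness to kill common neighbours in the first case and $C_4$-freeness (together with the fact that $\phi(v_j)$ itself is then not a candidate) in the second; summing gives at most $i-2<s-1\leq \deg_H(\phi(p(v_i)))$ forbidden vertices, so the extension always succeeds. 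It is worth observing that this is also the skeleton of the paper's own Lemma~\ref{lem:generallemma}: there too one maintains a maximal ``good'' (induced) partial embedding and bounds the number of blocked extensions using the forbidden-subgraph/girth hypothesis, the added complications in the paper coming from having to track colours of the candidate vertices and from the more general forbidden families $\mathcal{B}_r$ and $\{C_3,\ldots,C_{g-1}\}$.
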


The detailed survey of Scott and Seymour~\cite{ScottSeymourSurvey} lists many known results on the Gy\'arfas-Sumner conjecture, and discusses two well-studied variants of the conjecture: the ``oriented'' variant and the ``rainbow'' variant. We mainly concern ourselves with these two variants, which as we shall see, are closely related. Below, we give a brief introduction to each of these variants and present our contributions.

\subsection{Our results}\label{sec:ourresults}
The Gallai-Roy-Vitaver theorem states that every properly colored graph $G$ contains a (not necessarily induced) rainbow $\chi(G)$-path. It is natural to ask if a stronger statement, along the lines of the Gy\'arf\'as-Sumner conjecture, holds: for every integer $s\geq 1$, does there exist an integer $h(s)$ such that every properly colored graph $G$ having $\chi(G)\geq h(s)$ contains every tree on $s$ vertices as a rainbow subgraph? This is not true as shown by Erd\H{o}s and Hajnal~\cite{ErdosHajnal}, who showed the existence of graphs with arbitrarily large chromatic number that can be properly colored in such a way that the neighbourhood of every vertex is colored using just two colors (the so-called ``shift graph of triples''; see~\cite{KiersteadAndTrotter,ScottSeymourSurvey}). Thus, there are properly colored graphs with arbitrarily large chromatic number that do not even contain a rainbow claw. Note that the graphs produced by the construction of Erd\H{os} and Hajnal are also triangle-free. N.~R.~Aravind (see~\cite{Manu}) proposed to modify the Gallai-Roy-Vitaver theorem in another way by asking the following question: does every properly colored triangle-free graph $G$ contain an induced rainbow $\chi(G)$-path? Note that such a statement cannot be true for general graphs, as for example, the complete graph does not contain even an induced 3-path in spite of having arbitrarily large chromatic number. On the other hand, it is known that every triangle-free graph $G$ contains an induced $\chi(G)$-path~\cite{Gyarfas1987Problems}. We state Aravind's conjecture below.

\begin{conjecture}[N. R. Aravind]\label{conj:aravind}
Any properly colored triangle-free graph \(G\) contains an induced rainbow path on \(\chi(G)\) vertices.
\end{conjecture}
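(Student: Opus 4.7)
\bigskip
\noindent \textbf{Proof plan.} The natural starting point is the Gallai--Roy--Vitaver theorem itself. Given a proper coloring $c$ of $G$ with $k = \chi(G)$ colors, orient each edge $uv$ from the endpoint of smaller color to the endpoint of larger color; the resulting acyclic oriented graph $D$ has the property that every directed path in it is automatically rainbow, because colors strictly increase along such a path. Gallai--Roy--Vitaver guarantees the existence of a directed path on $k$ vertices in $D$, so the conjecture reduces to showing that some longest directed path of $D$ is also induced in $G$.

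\bigskip
\noindent Let $P = v_1 v_2 \cdots v_k$ be a longest directed path in $D$. Triangle-freeness immediately forbids chords $v_i v_{i+2}$: such an edge together with $v_{i+1}$ would form a triangle. Hence every chord $v_i v_j$ of $P$ satisfies $j - i \geq 3$ and is oriented $v_i \to v_j$. If $P$ has no chord we are done. Otherwise, for each chord $v_i v_j$ one would need to modify $P$ so as to drop $v_i$ or $v_j$ from the path---for instance, replacing the segment $v_{j-1} v_j v_{j+1}$ with $v_{j-1} w v_{j+1}$ for some vertex $w$ satisfying $c(v_{j-1}) < c(w) < c(v_{j+1})$ and adjacent to both endpoints. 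The hope is that the bipartite triangle-free structure between consecutive color classes provides enough candidate vertices $w$ to always allow such a swap, and a carefully designed lexicographic potential on the sequence of colors of $P$ would then drive the argument to termination.

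\bigskip
\noindent \textbf{Main obstacle.} All the difficulty concentrates on chords of distance at least $3$: these are fully compatible with triangle-freeness and with the local structure along $P$, so no easy argument rules them out, and any attempted rerouting risks either shortening $P$ below $k$ vertices (losing the full rainbow palette) or merely introducing new chords of comparable distance. This is essentially why the Scott--Seymour methods applied with $K = C_3$ yield only super-exponential upper bounds on $\ell(s, C_3)$ rather than the tight bound $\ell(s, C_3) = s$ conjectured by Aravind. A successful resolution would likely require either a genuinely global argument tracking the entire chord structure simultaneously, or an inductive approach---say, peeling off a single color class $C_k$ and locating within $C_k$ a vertex that extends an induced rainbow $(k-1)$-path produced by induction in $G - C_k$---and it is precisely the step of producing such an extending vertex while preserving both the induced and the rainbow conditions that has resisted all prior attempts.
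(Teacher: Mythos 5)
You have not proved the statement, and in fact no complete proof exists in the paper either: Conjecture~\ref{conj:aravind} is stated as an \emph{open conjecture}, and the paper only records partial progress around it (the case $g(G)>\chi(G)$ is trivial via Gallai--Roy--Vitaver, the case $g(G)=\chi(G)$ is due to Babu et al.~\cite{Manu}, and the paper's own theorems give bounds such as $\ell(s,C_4)\leq (s^2+s)/2$ and $\ell(s,\{C_3,\ldots,C_{g-1}\})\leq s^{1+4/(g-4)}$, none of which touch the triangle-free case with the conjectured linear bound). So any submission claiming this statement must either be a genuine breakthrough or contain a gap, and yours is explicitly the latter: your own ``Main obstacle'' paragraph concedes that chords $v_iv_j$ with $j-i\geq 3$ are not ruled out and that the rerouting step is only a ``hope.''

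Concretely, the gap is the entire content of the problem. Taking the natural orientation $D$ of $G$ with respect to the coloring and a longest directed ($=$ rainbow) path $P$ is exactly the standard starting point (the paper uses the same orientation, refined by the Greedy Refinement Algorithm, to prove its actual results), and triangle-freeness only kills chords at distance $2$. For the remaining chords you propose swapping $v_j$ for some $w$ adjacent to $v_{j-1}$ and $v_{j+1}$ with an intermediate color, but (a) no such $w$ need exist, (b) even if it does, the new path may acquire fresh chords anywhere along its length, and (c) you define no potential function and give no termination argument, so there is no mechanism forcing the process to converge to an induced path on $\chi(G)$ vertices. The paper's machinery (out-tree colorings plus Tur\'an-type edge counts on an auxiliary graph) controls the number of ``bad'' vertices only under $K_{2,r}$-freeness or girth at least $5$; under triangle-freeness alone the count of such vertices cannot be bounded this way, which is precisely why the best known bounds for $\ell(s,C_3)$ remain super-exponential. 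Your writeup should be presented as a discussion of the difficulty, not as a proof.
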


This conjecture can be generalized along the lines of the Gy\'arf\'as-Sumner conjecture as follows. Let $\mathcal{F}$ be a finite family of graphs. We define \(\ell(s,\mathcal{F})\) to be the smallest integer such that all properly colored \(\mathcal{F}\)-free graphs \(G\) with \(\chi(G) \geq \ell(s,\mathcal{F})\) contain an induced rainbow $s$-path. Clearly, $\ell(s,\emptyset)$ does not exist for $s\geq 3$ (since, as noted above, there exist graphs having arbitrarily large chromatic number and containing no induced 3-paths). Scott and Seymour~\cite{ScottAndSeymour} showed that $\ell(s,K)$ exists for every complete graph $K$, or in other words, that all rainbow paths are $\chi$-bounding (here we mean that the class of all properly colored graphs that do not contain an induced rainbow $s$-path, for some integer $s\geq 0$, is $\chi$-bounded). But the upper bounds provided by their construction are very fast growing functions. For example, their bound for \(\ell(s,C_3)\) is super-exponential in \(s\), whereas Conjecture~\ref{conj:aravind} states that \(\ell(s,C_3) = s\).

Note that Conjecture~\ref{conj:aravind} holds trivially for the case when \(g(G) > \chi(G)\) because the rainbow \(\chi(G)\)-path in $G$ (which exists by the Gallai-Roy-Vitaver theorem) 
is also an induced path in this case. Babu et al.~\cite{Manu} showed Conjecture~\ref{conj:aravind} to be true for the case when \(g(G) = \chi(G)\). Therefore, as a special case, Conjecture~\ref{conj:aravind} holds true for 4-chromatic triangle-free graphs. Gy\'arf\'as and S\'ark\"ozy~\cite{GyarfasAndSarkozy} showed that $\ell(s,\{C_3,C_4\})$ exists. From their proof, it follows that \(\ell(s,\{C_3,C_4\}) = \mathcal{O}\big((2s)^{2s}\big)\). 
In this paper, we improve this result significantly by bringing this bound down to a quadratic function of $s$; we show that $\ell(s,C_4)=\mathcal{O}(s^2)$. We in fact show the more general result that $\ell(s,K_{2,r})=\mathcal{O}(rs^2)$ for any $r\geq 2$ (note that $C_4$ is the same as $K_{2,2}$). The exact result is stated below.
\medskip

\begin{restatable}{theorem}{rainbowpaththm}\label{thm:rainbowpaths}
Let $G$ be a properly colored graph. In each of the following cases, $G$ contains at least $s!/2$ distinct
induced rainbow paths on $s$ vertices:
\begin{enumerate}
\vspace{-0.05in}
\renewcommand{\labelenumi}{(\roman{enumi})}
\renewcommand{\theenumi}{(\roman{enumi})}
\itemsep 0.01in
\item\label{rainbowpathk2r} \(G\) is \(K_{2,r}\)-free (\(r \geq 2\)) and \(\chi(G)\geq (r-1)(s-1)(s-2)/2 + s\).
\item\label{rainbowpathgirth} \(G\) has girth $g\geq 5$ and \(\chi(G) \geq s^{1+\frac{4}{g-4}}\).
\end{enumerate}
\end{restatable}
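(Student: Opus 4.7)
The plan is to reduce the rainbow-path problem to the oriented setting via the canonical acyclic orientation induced by the coloring. Given the proper coloring $c$ of $G$, I orient each edge from the endpoint of smaller color to the endpoint of larger color; call the resulting oriented graph $D$. Because the orientation is derived from a total order, $D$ is acyclic, and in particular bikernel-perfect. Moreover $\hat{D}=G$, so $D$ is $K_{2,r}$-free in case (i) and has girth at least $g$ in case (ii). The crucial observation is that any induced directed $s$-path $v_1 \to v_2 \to \cdots \to v_s$ in $D$ has strictly increasing colors along it, hence is an induced rainbow $s$-path of $G$. It therefore suffices to exhibit $s!/2$ distinct induced directed $s$-paths in $D$.

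I would argue by induction on $s$, maintaining a collection of at least $s!/2$ induced directed $s$-paths in $D$, always extending at the source (smallest-color) end of each previous path so that different extensions yield different paths. The inductive step takes an induced directed $(s-1)$-path $v_1 \to \cdots \to v_{s-1}$ and shows that at least $s$ new sources $v_0$ may be prepended to form induced directed $s$-paths; a valid $v_0$ is an in-neighbor of $v_1$ in $D$ that is non-adjacent in $G$ to $v_2,\ldots,v_{s-1}$. The product $s \cdot (s-1)!/2 = s!/2$ then closes the induction. In case (i), $K_{2,r}$-freeness bounds the common in-neighborhood of $v_1$ with any other $v_j$ by at most $r-1$, so, summed over the $s-2$ interior vertices and over the $s-1$ inductive levels, one expects a chromatic overhead on the order of $(r-1)\binom{s}{2}$, matching the hypothesis $(r-1)(s-1)s/2 + s$. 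In case (ii) a chord would force a cycle of length strictly less than $g$, and a BFS- or kernel-based layering inside $D$ should give the sharper bound $s^{1+4/(g-4)}$ by trading short-cycle freeness against iterated-neighborhood growth.

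The main obstacle is controlling both \emph{quantity} and \emph{distinctness} simultaneously at each inductive step. Distinctness is naturally handled by always extending at the unique source of each directed path in $D$, since the source is determined by the path together with the acyclic orientation, so two distinct choices of predecessor yield two distinct vertex sets and hence two distinct induced subgraphs of $G$. Quantity is the delicate half: after excising the at most $(r-1)(s-2)$ chord-creating in-neighbors of $v_1$ (respectively, the short-cycle-creating ones in the girth case), the chromatic number of the sub-digraph of remaining predecessors of $v_1$ must still exceed $s$, and this is precisely what the chromatic-number budget in the theorem statement is calibrated to guarantee. The girth case will additionally require a careful analysis of how chromatic number inside nested neighborhoods decays against the girth parameter, which is the source of the exponent $1+4/(g-4)$.
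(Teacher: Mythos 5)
Your reduction to the color-induced acyclic orientation $D$ is the same starting point as the paper's, and your observation that induced directed $s$-paths in $D$ are induced rainbow $s$-paths in $G$ is correct. The gap is in the core of the argument: you work with the natural orientation of the \emph{given} coloring and assert, at each inductive step, that ``the chromatic number of the sub-digraph of remaining predecessors of $v_1$ must still exceed $s$,'' claiming the chromatic-number hypothesis is ``calibrated'' for this. Nothing supports that claim. The chromatic number of $G$ is a global quantity; in the orientation by colors, the in-neighbourhood of a particular source can be an independent set (for instance, any vertex of the second-smallest color has only predecessors of the smallest color), and nothing in your construction forces the sources of the paths you build to have rich predecessor sets. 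This is exactly the obstruction the paper points out --- the given coloring need not be an out-tree coloring of its natural orientation under \emph{any} ordering of the colors --- and it is overcome there by recoloring with the Greedy Refinement Algorithm (Algorithm~\ref{GreedyRefinement}), whose output $\alpha$ is an out-tree coloring of (a subgraph of) the natural orientation, so that every vertex of high color is guaranteed a neighbour of \emph{every} smaller color pointing in the right direction. Without a refinement step or an equivalent invariant, your induction cannot even guarantee a single valid extension, let alone $s$ of them; the $K_{2,r}$ and girth arguments you sketch only bound the number of \emph{bad} predecessors, which is useless without a lower bound on the total supply.

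There is a second, quantitative gap: the multiplicity $s!/2$. You want at least $s$ extensions of every $(s-1)$-path at every level, but the chromatic budget in the statement is (in the paper's proof) only enough to produce \emph{one} induced decreasing path per total order on the colors; the count $s!/2$ is then obtained by running the argument over all $k!$ orderings and noting that a fixed path is produced by at most $2k!/s!$ of them, not by exhibiting many extensions of a single path. Your per-path branching claim would require a strictly stronger local lemma that you do not prove and that the stated bounds do not obviously support. Finally, the girth case in your proposal is only a hope (``a BFS- or kernel-based layering should give the sharper bound''): the exponent $1+\frac{4}{g-4}$ in the paper comes from building an auxiliary graph on the vertices of a maximal partial embedding, showing its girth is at least $g/2$, and applying the extremal bounds of Lemma~\ref{lem:extremal}; no ingredient of that kind appears in your outline.
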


\begin{comment}
As the theorem above implies that $\ell(s,C_4)\leq \frac {s^2 - s+2} {2} $, it improves the bound given by Gy\'arf\'as and S\'ark\"ozy and also covers a bigger class of graphs. For graphs of higher girth, our results imply that \(\ell(s,\mathcal{C}_g) \leq s^{1+\frac{4}{g-4}}\) where \(\mathcal{C}_{g} = \{C_3,C_4,\dots,C_{g-1}\}\) and \(g \geq 5\). Note that the method of Gy\'arf\'as and S\'ark\"ozy does not yield a bound which improves as the girth of the graph increases.
\end{comment}
\medskip

\begin{comment}
Although Babu et al.~\cite{Manu} proved that \(\ell(s,\mathcal{C}_s) =s\), it seems extremely difficult by their method to even show that \(\ell(s,\mathcal{C}_{s-1})\) is linear in $s$. However, our result above (Theorem~\ref{thm:rainbowpaths}\ref{rainbowpathgirth}) shows that linear bounds on \(\ell(s,\mathcal{C}_g)\) can be achieved even when \(g = \Omega(\log s)\). Note that this is equivalent to saying that
there exists a function $h$ such that for every $\epsilon<1$, every properly colored graph $G$ having girth at least $h(\epsilon)\log\chi(G)$ contains an induced rainbow path on at least $\epsilon\cdot\chi(G)$ vertices \big(for example, we could let $h(\epsilon)=\frac{4}{\log(1/\epsilon)}$~\big).
\end{comment}
\medskip

\begin{comment}
Another feature of our method, in comparison to the previous approaches, is that we are able to show the presence of at least \(s!/2\) distinct induced rainbow \(s\)-paths as opposed to a single rainbow $s$-path.
\end{comment}
\bigskip

Let $G$ be an undirected graph, $\alpha$ be a proper vertex coloring of $G$, and $<$ a total order defined on the colors used in the coloring $\alpha$.
Assign a direction to each edge $uv$ of $G$ as follows: If $\alpha(u) <  \alpha(v)$, direct the edge from $v$ to $u$; otherwise,
direct it from $u$ to $v$. Let the resulting oriented graph be $D$. It is easy to verify that $D$ is a directed acyclic graph, since along any directed path, the colors of the vertices are in decreasing order. We refer to $D$ as the \emph{natural orientation} of $G$ with respect to $(\alpha, <)$. Whenever the ordering $<$ is clear from the context, we may drop $<$ and just call it ``the natural orientation of $G$ with respect to $\alpha$''. Note that any induced directed path of $D$ would correspond to an induced rainbow path in $G$ and any induced rainbow path in $G$ along which the colors decrease corresponds to an induced directed path in $D$.
\medskip

Let $\mathcal{F}$ be a finite collection of directed or undirected graphs. Let $\overline{\ell}(s,\mathcal{F})$ denote the least integer such that every $\mathcal{F}$-free directed acyclic graph $G$ with $\chi(G)\geq\overline{\ell}(s,\mathcal{F})$ contains an induced directed $s$-path.
Note that a properly colored graph contains an induced rainbow path on $s$ vertices if its natural orientation $D$ (with respect to an arbitrarily chosen ordering of the colors) contains an induced directed path on $s$ vertices. It follows that $\ell(s,\hat{\mathcal{F}})\leq\overline{\ell}(s,\mathcal{F})$, where $\hat{\mathcal{F}}=\{\hat{G}\colon G\in\mathcal{F}\}$.
This means that in order to prove Theorem~\ref{thm:rainbowpaths}\ref{rainbowpathk2r}, it suffices to show that $\overline{\ell}(s,K_{2,r}) \le (r-1)(s-1)(s-2)/2 + s$ and in order to prove Theorem~\ref{thm:rainbowpaths}\ref{rainbowpathgirth}, it suffices to show that for $g\geq 5$, $\overline{\ell}(s,\{C_3,C_4,\ldots,C_{g-1}\}) \le s^{1+\frac{4}{g-4}}$. In fact, we prove a stronger statement: we show that if the chromatic number of a suitably restricted directed acyclic graph is at least the bounds mentioned above, then it contains every out-tree and every in-tree on $s$ vertices as an induced subgraph. Also, for the former case, we prove a slightly stronger statement, namely instead of $K_{2,r}$-free directed acylic graphs, we prove the bound for the more general class of directed acyclic graphs that do not contain as a subraph an orientation of a $K_{2,r}$ having a vertex of out-degree or in-degree $r$. Formally, for $r\geq 2$, let $\mathcal{B}^+_r$ denote the family of orientations of $K_{2,r}$ in which at least one vertex has out-degree $r$, and $\mathcal{B}^-_r$ denote the family of orientations of $K_{2,r}$ in which at least one vertex has in-degree $r$. We also define $\mathcal{B}_r=\mathcal{B}^+_r\cup\mathcal{B}^-_r$.

\begin{restatable}{theorem}{outtreethm}\label{thm:outtree}
Let $G$ be a directed acyclic graph. Then in each of the following cases, $G$ contains every out-tree (resp. every in-tree) on at most $s$ vertices as an induced subgraph:
\begin{enumerate}
\vspace{-0.05in}
\renewcommand{\labelenumi}{(\roman{enumi})}
\renewcommand{\theenumi}{(\roman{enumi})}
\itemsep 0.01in
\item\label{outtbr} $G$ is $\mathcal{B}^+_r$-free (resp. $\mathcal{B}^-_r$-free), where $r\geq 2$, and $\chi(G)\geq (r-1)(s-1)(s-2)/2+s$.
\item\label{outtgirth} $G$ has girth $g\geq 5$ and \(\chi(G) \geq s^{1+\frac{4}{g-4}}\).
\end{enumerate}
\end{restatable}

Could it be possible that a theorem such as the one above could be made for arbitrary oriented trees instead of just out-trees and in-trees, and also without restricting ourselves to directed acyclic graphs? In fact, if we only want the oriented trees to occur as subgraphs that are not necessarily induced, then such a result exists due to Burr~\cite{Burr}.

\begin{theorem}[Burr~\cite{Burr}]\label{thm:burr}
Every directed graph $G$ having $\chi(G)\geq (s-1)^2$ contains every oriented forest on $s$ vertices as a subgraph.
\end{theorem}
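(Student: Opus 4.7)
The plan is to induct on $s$, after reducing to the tree case: every oriented forest on $s$ vertices is a spanning subgraph of some oriented tree on the same $s$ vertices (obtained by adding arcs with arbitrary orientations to join the components), so it suffices to establish the claim for oriented trees. For $s\leq 2$ the claim is immediate.

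For the inductive step, fix an oriented tree $T$ on $s$ vertices, pick a leaf $v$ of $T$ with unique neighbor $u$, and set $T'=T-v$. Up to reversing every arc of $G$ (which leaves $\chi$ unchanged and only reverses $T$), I may assume the edge of $T$ between $u$ and $v$ is directed from $u$ to $v$. Let $V'\subseteq V(G)$ be the set of vertices whose out-degree in $G$ is at most $s-2$, and let $H=G-V'$. The crucial estimate is $\chi(G[V'])\leq 2s-3$: in $G[V']$ every vertex has out-degree at most $s-2$, so the total number of arcs is at most $(s-2)|V'|$, giving average degree at most $2(s-2)$ in $\hat{G}[V']$. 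Since the same bound descends to every induced subgraph, $\hat{G}[V']$ is $2(s-2)$-degenerate and hence properly $(2s-3)$-colorable. Combining disjoint color palettes on $\hat{H}$ and $\hat{G}[V']$ yields
\[
\chi(H)\geq\chi(G)-\chi(G[V'])\geq (s-1)^2-(2s-3)=(s-2)^2,
\]
which is exactly the threshold needed to invoke the inductive hypothesis for $T'$ on $s-1$ vertices.

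Applying induction produces an embedding $\phi'\colon V(T')\to V(H)$. Since $\phi'(u)\in V(H)$, its out-degree in $G$ is at least $s-1$, while the image of $\phi'$ comprises only $s-1$ vertices; hence at most $s-2$ of the out-neighbors of $\phi'(u)$ lie in $\phi'(V(T'))$, so I may pick an unused out-neighbor $w$ of $\phi'(u)$ and extend $\phi'$ by setting $\phi(v)=w$, yielding an embedding of $T$ in $G$. The only delicate point is pinning down the degeneracy constant $2s-3$ so that it matches the algebraic identity $(s-1)^2-(s-2)^2=2s-3$; this is what makes the induction close exactly, and beyond this calculation the remaining steps are routine.
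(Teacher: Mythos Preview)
The paper does not prove Theorem~\ref{thm:burr}; it is quoted as a result of Burr and used as background, so there is no in-paper argument to compare against. Your proof is precisely the classical one attributed to Burr: strip off the vertices of out-degree at most $s-2$, bound the chromatic number of that piece by $2s-3$ via degeneracy, and apply the inductive hypothesis to the high-out-degree remainder $H$, then extend the embedded $T'$ by one unused out-neighbour. The inductive step and the arithmetic $(s-1)^2-(2s-3)=(s-2)^2$ are correct.

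One small gap: the base case $s=2$ is not ``immediate'' in the form you need. With the bound as stated, $\chi(G)\geq (2-1)^2=1$ is satisfied by a single isolated vertex, which contains no arc; so the $s=2$ case of the theorem (and hence the inductive hypothesis you invoke at $s=3$, where you only get $\chi(H)\geq 1$) is literally false. This is a known off-by-one in how the bound is sometimes quoted rather than a flaw in your method. The cleanest fix is either to read the hypothesis as $\chi(G)\geq (s-1)^2+1$ throughout (the identity becomes $(s-1)^2+1-(2s-3)=(s-2)^2+1$, so the induction still closes, and now $s=1,2$ are genuinely immediate), or to take $s=3$ as the true base case and verify it directly (Gallai--Roy gives the monotone $3$-path, and the degeneracy argument forces a vertex of out-degree $\geq 2$ and one of in-degree $\geq 2$, covering the other two orientations).
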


Note that the above theorem does not require any restrictions on the graph $G$. But what if we want to show that every oriented tree on $s$ vertices appears as an \emph{induced} subgraph of $G$ when $\chi(G)$ is large enough? Clearly, we will need some restrictions on $G$ in this case, as orientations of complete graphs cannot contain induced subgraphs isomorphic to any oriented tree on more than two vertices. It turns out that if we subject $G$ to restrictions similar to those in Theorem~\ref{thm:outtree}, then every oriented tree can be shown to be an induced subgraph of $G$, even without requiring $G$ to be acyclic, provided that $\chi(G)$ is large enough. 
In particular, we generalize Theorem~\ref{thm:outtree}\ref{outtbr} by showing the existence of \emph{every} oriented tree on $s$ vertices as an induced subgraph in \emph{any} $\mathcal{B}_r$-free oriented graph whose chromatic number is just a little over two times the bound in Theorem~\ref{thm:outtree}\ref{outtbr}.

\begin{restatable}{theorem}{orientedgstthm}\label{thm:orientedgyarfas}
	Let $G$ be a $\mathcal{B}_r$-free oriented graph ($r\geq 2$) with $\chi(G) \geq (r-1)(s-1)(s-2)+2s+1$. Then $G$ contains every oriented tree on $s$ vertices as an induced subgraph.
\end{restatable}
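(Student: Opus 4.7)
I would construct an induced copy of the target oriented tree $T$ inside $G$ by processing the vertices of $T$ one at a time along a BFS-traversal of $T$, maintaining at each stage a ``reservoir'' subgraph of $G$ whose chromatic number is carefully controlled, and appealing to Theorem~\ref{thm:outtree}(i) as a black box. First, fix an optimal proper colouring $\alpha$ of $G$ and split $E(G)$ into the forward arcs $G^{+}$ (arcs $(u,v)$ with $\alpha(u) < \alpha(v)$) and the backward arcs $G^{-}$, yielding two $\mathcal{B}_r$-free spanning DAGs to which Theorem~\ref{thm:outtree}(i) may be applied to produce out-trees and in-trees on up to $s$ vertices.

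Root $T$ at an arbitrary vertex and enumerate its vertices $v_1, \ldots, v_s$ in BFS order; for each $i \ge 2$, record $v_i$'s parent $p(v_i)$ and the orientation $\sigma(v_i) \in \{+,-\}$ of the parent edge in $T$. I would then construct an injection $\phi : V(T) \to V(G)$ inductively: at step $i$, given an induced partial copy on $\phi(\{v_1,\ldots,v_{i-1}\})$ together with a reservoir $H_{i-1} \subseteq G$ of large chromatic number disjoint from the embedded vertices, look for $\phi(v_i) \in V(H_{i-1})$ which is adjacent to $\phi(p(v_i))$ in $G$ with the correct orientation $\sigma(v_i)$ and non-adjacent in $G$ to every other $\phi(v_k)$.

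The crucial quantitative bound uses $\mathcal{B}_r$-freeness: for each previously embedded vertex $\phi(v_k)$ with $k \ne p(v_i)$, at most $r-1$ out-neighbours (resp.\ in-neighbours) of $\phi(p(v_i))$ can simultaneously be adjacent to $\phi(v_k)$, so excising these forbidden candidates costs at most $(r-1)(i-1) + O(1)$ from the chromatic budget of $H_{i-1}$. Summing these per-step losses across $i = 2, \ldots, s$ and accounting for a factor of $2$ incurred by having to work inside whichever of the two sub-DAGs $G^{+}$ or $G^{-}$ matches $\sigma(v_i)$ (each of which carries at worst roughly half of $\chi(G)$), the initial chromatic number of $G$ must be at least $(r-1)(s-1)(s-2) + 2s + 1$ in order to carry the embedding all the way through. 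Theorem~\ref{thm:outtree}(i) enters whenever the partial embedding needs to be extended by a whole sub-out-tree or sub-in-tree in one direction: it furnishes the ``scaffold'' inside the appropriate sub-DAG.

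The principal difficulty will be maintaining a clean chromatic lower bound on $H_{i}$ after iterated excisions, since without $\mathcal{B}_r$-freeness the forbidden-neighbour sets could consume the chromatic budget multiplicatively rather than additively. A secondary obstacle is that $G$ is not assumed to be acyclic, so one must exploit the split into $G^{+}$ and $G^{-}$ only through induced sub-DAGs large enough to still support Theorem~\ref{thm:outtree}(i); packaging the bookkeeping so that the total loss is exactly the $(r-1)\binom{s-1}{2}$-type term plus the linear overhead $2s + 1$ is the most technically delicate part of the argument.
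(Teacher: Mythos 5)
There is a genuine gap at the heart of your plan: the ``reservoir'' mechanism does not supply the adjacencies you need. When you come to place $\phi(v_i)$, you need a vertex of $H_{i-1}$ that is an out- or in-neighbour of the \emph{specific, already-embedded} vertex $\phi(p(v_i))$; a large chromatic number of $H_{i-1}$ gives no such guarantee, since $\phi(p(v_i))$ may have no neighbour in $H_{i-1}$ at all. Your $(r-1)$-per-embedded-vertex counting (which is indeed the right use of $\mathcal{B}_r$-freeness, and appears in the paper too) only bounds the number of \emph{forbidden} candidates; it does nothing to create candidates. The paper resolves exactly this issue not with a chromatic reservoir but with an out-degree threshold: in Lemma~\ref{lem:oriented-gyarfas} it removes the set $S$ of vertices of out-degree at most $(r-1)(s-2)+t$, shows $\chi(D[S])\le 2((r-1)(s-2)+t)$ via an average-degree/Brooks argument (Lemma~\ref{lem:avgdegree}, using that a tournament on $2r+1$ vertices contains a member of $\mathcal{B}_r$), embeds the tree $T'$ obtained from $T$ by deleting \emph{all} out-leaves (or all in-leaves) inside $D-S$ by induction, and then attaches the deleted leaves using the guaranteed high out-degree of their parents together with the $(r-1)$-counting. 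This forces a level-by-level recursion on the parameter $\st(T)$ (Theorem~\ref{thm:oriented_gyarfas}) rather than a vertex-by-vertex BFS embedding, precisely because every parent that must later sprout an out-leaf has to sit in the high-out-degree part simultaneously.

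Two further steps in your outline would also fail as stated. First, an induced copy produced by Theorem~\ref{thm:outtree}(i) inside $G^{+}$ (or $G^{-}$) is induced only in that sub-DAG; arcs of the other class may join its vertices in $G$, so it need not be induced in $G$, and in any case the theorem gives you a copy somewhere in the sub-DAG, not one rooted at a prescribed already-embedded vertex and avoiding the other embedded vertices. Second, the claim that each of $G^{+}$, $G^{-}$ ``carries at worst roughly half of $\chi(G)$'' is not justified: for an edge partition one only gets $\chi(G)\le\chi(\hat{G^{+}})\,\chi(\hat{G^{-}})$, i.e.\ a $\sqrt{\chi(G)}$-type guarantee, which would not yield the stated quadratic bound. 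Note also that the paper's proof of Theorem~\ref{thm:orientedgyarfas} makes no use of Theorem~\ref{thm:outtree} or of a colouring-induced acyclic split at all; it works directly in the (possibly non-acyclic) oriented graph, and the final bound $(r-1)(s-1)(s-2)+2s+1$ comes from maximizing $(r-1)(2s-\st(T)-3)\st(T)+2s+1$ over $\st(T)$, not from summing per-vertex excision losses.
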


We then generalize Theorem~\ref{thm:outtree}\ref{outtgirth} by showing the existence of \emph{every} oriented tree on $s$ vertices as an induced subgraph in \emph{any} bikernel-perfect oriented graph (directed acyclic graphs form a subclass of bikernel-perfect oriented graphs) having girth at least 5 whose chromatic number is at least two times the bound in Theorem~\ref{thm:outtree}\ref{outtgirth}.

\begin{restatable}{theorem}{bikernelthm}\label{thm:bikernel}
	Let $G$ be a bikernel-perfect oriented graph having girth $g\geq 5$. Then $G$ contains every oriented tree on $s$ vertices as an induced subgraph if $\chi(G)\geq 2s^{1+\frac{4}{g-4}}$.
\end{restatable}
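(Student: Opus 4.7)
The plan is to prove the theorem by strong induction on $s$, following an extension-based strategy analogous to the proof of Theorem~\ref{thm:outtree}(ii). The base case $s=1$ is immediate. For the inductive step, let $T$ be an oriented tree on $s\geq 2$ vertices, choose a leaf $v$ of $T$ with unique neighbor $u$, and set $T' = T - v$. By reversing the orientations of both $T$ and $G$ simultaneously if necessary (which interchanges kernels with antikernels while preserving bikernel-perfectness and girth), we may assume the edge of $T$ incident to $v$ is $(u,v)$. The objective is then to find an induced copy of $T'$ in $G$ together with an out-neighbor of the image $u^*$ of $u$ that is non-adjacent to every other vertex of the copy.

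Using bikernel-perfectness, we iteratively extract kernels $K_1, K_2, \dots, K_m$, where each $K_i$ is a kernel of the induced subgraph $G_i = G[V(G) \setminus (K_1 \cup \dots \cup K_{i-1})]$ (which is itself bikernel-perfect as an induced subgraph of $G$). Let $H = G_{m+1}$; then $H$ is bikernel-perfect of girth at least $g$, and since each $K_i$ is independent we have $\chi(H) \geq \chi(G) - m$. By the defining property of a kernel applied at each peeling stage, every vertex $w \in V(H)$ has at least one out-neighbor in each $K_i$, and since the $K_i$ are pairwise disjoint, $w$ has at least $m$ distinct out-neighbors in $K_1 \cup \dots \cup K_m$. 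We choose $m$ so as to simultaneously satisfy $\chi(H) \geq 2(s-1)^{1+4/(g-4)}$ (enabling the inductive hypothesis applied to $H$ for the tree $T'$ on $s-1$ vertices) and $m \geq s - 1$ (reserving enough candidate out-neighbors to carry out the leaf extension).

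By the inductive hypothesis, $H$ contains an induced copy of $T'$; let $u^* \in V(H)$ be the vertex corresponding to $u$ in this copy. Then $u^*$ has at least $s-1$ candidate out-neighbors in $K_1 \cup \dots \cup K_m$. The girth condition $g \geq 5$ forbids $4$-cycles, so any two distinct vertices of $G$ share at most one common neighbor; hence each of the other $s-2$ vertices of the $T'$-copy rules out at most one of these candidates from serving as $v^*$. A valid $v^* \in K_1 \cup \dots \cup K_m$ therefore exists, and adjoining $v^*$ to the $T'$-copy produces the desired induced copy of $T$ in $G$.

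The main obstacle is satisfying both conditions on $m$ within the chromatic budget $\chi(G) \geq 2s^{1+4/(g-4)}$; that is, one must have $2s^{1+4/(g-4)} - m \geq 2(s-1)^{1+4/(g-4)}$ together with $m \geq s-1$. When $5 \leq g \leq 8$ the exponent $1 + 4/(g-4) \geq 2$, the difference $s^{1+4/(g-4)} - (s-1)^{1+4/(g-4)}$ comfortably dominates $s-1$, and the factor $2$ in the chromatic bound provides ample slack for both conditions. For larger $g$, where the exponent is close to $1$, the naive balancing breaks down and the argument requires more care — most likely by strengthening the inductive statement so that $u^*$ is produced with large out-degree into a reserved subset of $V(G)$ (bypassing the need to peel $s-1$ full kernels), or by invoking a finer structural property of bikernel-perfect graphs of high chromatic number. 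The symmetric case (leaf edge $(v,u)$) is handled identically by peeling antikernels in place of kernels.
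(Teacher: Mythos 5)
Your extension step is sound as far as it goes: peeling kernels (or antikernels, after reversing) one at a time from a bikernel-perfect graph costs at most one unit of chromatic number per peel, every surviving vertex gains one out-neighbour per peeled kernel, and the girth-$\geq 5$ condition does guarantee that each of the other $s-2$ tree vertices kills at most one of the $m$ candidates for $v^*$. But the proof has a genuine gap, which you yourself flag and do not close: the budget balancing $2s^{1+4/(g-4)}-m\geq 2(s-1)^{1+4/(g-4)}$ with $m\geq s-1$ fails for all large $s$ as soon as $g\geq 9$, since then $2\bigl(s^{1+4/(g-4)}-(s-1)^{1+4/(g-4)}\bigr)=\mathcal{O}\bigl(s^{4/(g-4)}\bigr)=o(s)$. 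Worse, the failure is not an artifact of ``naive balancing'' that more care can repair within your framework: your scheme reserves $\Omega(s)$ fresh independent layers for every leaf it attaches, so unrolling the induction consumes $\sum_{j=2}^{s}(j-1)=\Omega(s^2)$ chromatic number in total, while the theorem only supplies $2s^{1+4/(g-4)}=o(s^2)$ when $g>8$. Any argument that pays a per-leaf price of $\Theta(s)$ reserved candidates cannot reach the nearly linear bound, which is exactly the regime (large girth) where the statement is interesting; for $5\leq g\leq 8$ your argument does work, but there the exponent is at least $2$ and the claim is comparatively weak.

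The missing idea is the one the paper uses. It peels antikernels and kernels \emph{alternately} from $G$ once and for all, producing a ``parity-coloring'' with $k\geq\chi(G)$ colors in which every vertex has an in-neighbour in every smaller odd color class and an out-neighbour in every smaller even color class (Theorem~\ref{thm:bikernel} is then immediate from Lemma~\ref{lem:orientedtree}). The tree is grown greedily along a DFS ordering, always attaching the next vertex with the largest available color of the correct parity; the crucial accounting is that each color that gets ``blocked'' during the growth contributes an edge to an auxiliary simple graph $H$ on the $i$ vertices of the current partial tree, and girth $g\geq 5$ forces $g(H)\geq g/2$, so by the extremal bounds of Lemma~\ref{lem:extremal} only about $i^{1+4/(g-4)}$ colors can ever be blocked. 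Thus the cost per attached vertex is charged globally to edges of a sparse high-girth graph rather than locally as $s-1$ reserved neighbours of a single vertex $u^*$, and this is what makes the bound $2s^{1+4/(g-4)}$ attainable for every $g\geq 5$. To complete your proof you would need to import this charging mechanism (or an equivalent strengthening of the inductive statement), not merely tune the choice of $m$.
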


A relevant question at this point would be whether the natural generalization of the Gy\'arf\'as-Sumner conjecture to oriented graphs could also be true: i.e. could it be true that for every integer $s$, every oriented graph of high enough chromatic number that does not contain a fixed oriented complete graph $K$ as a subgraph contains every oriented tree on $s$ vertices as an induced subgraph? As the following discussion shows, this is not the case.
Given a finite collection of directed or undirected graphs ${\cal F}$ and an oriented graph $A$, let $\overline{f}(A,{\cal F})$ be the smallest integer such that every ${\cal F}$-free oriented graph $G$ having $\chi(G)\geq \overline{f}(A,{\cal F})$ contains $A$ as an induced subgraph. Since a graph contains $\hat{A}$ as an induced subgraph if some orientation of it contains $A$ as an induced subgraph, we have $f(\hat{A},\hat{\mathcal{F}})\leq\overline{f}(A,\mathcal{F})$. Note that if $\mathcal{F}$ contains an oriented forest, then by Theorem~\ref{thm:burr}, we have that $\overline{f}(A,\mathcal{F})$ exists for every oriented graph $A$. So the families $\mathcal{F}$ that we consider do not contain any oriented forest. Then as noted before, $f(\hat{A},\hat{\mathcal{F}})$ exists only when $\hat{A}$ is a forest, which implies by the previous inequality that $\overline{f}(A,\mathcal{F})$ exists only if $A$ is an oriented forest. Thus, as before, we will focus on the case when $A$ is an oriented tree. As with undirected trees, we say that an oriented tree $T$ is $\chi$-bounding if $\overline{f}(T,K)$ exists for all complete graphs $K$.
Clearly, if $\overline{f}(T,K)$ exists for every oriented tree $T$ and any complete graph $K$, then the Gy\'arf\'as-Sumner conjecture would also hold. However, it is known that even \(\overline{f}(T,C_3)\) does not exist when $T$ corresponds to certain orientations of a 4-path: Kierstead and Trotter showed that \(\rightarrow\rightarrow\rightarrow\) is not \(\chi\)-bounding~\cite{KiersteadAndTrotter} and Gy\'arf\'as showed that \(\rightarrow\leftarrow\rightarrow\) is not \(\chi\)-bounding~\cite{GyarfasProblem115}. Therefore, $\overline{f}(T,K)$ can exist for every complete graph $K$ only for certain restricted kinds of oriented trees. Recently, Chudnovsky, Scott and Seymour showed that the family of oriented stars and the remaining orientations of a 4-path are all \(\chi\)-bounding~\cite{chudnovsky}.

Further questions in this direction could involve finding out whether specific orientations of larger trees are \(\chi\)-bounding. However, proceeding in this direction may be tedious. A more fruitful approach would be to alter  the constraint of triangle-freeness and ask whether $\overline{f}(T,C_4)$ exists for every oriented tree $T$. In other words, is it true that every oriented tree appears as an induced subgraph of any \(C_4\)-free oriented graph with a high enough chromatic number?
We show that this is indeed the case: in particular, Theorem~\ref{thm:orientedgyarfas} shows that $\overline{f}(T,\mathcal{B}_r)$ exists for every oriented tree $T$ and $r\geq 2$. As every $C_4$-free oriented graph is also $\mathcal{B}_2$-free, this result is an oriented variant of Gy\'arf\'as, Szemeredi and Tuza's result that $f(T,\{C_3,C_4\})$ exists for every undirected tree $T$ (Theorem~\ref{thm:gst}). 
As before, for a positive integer $s$ and a finite collection of directed or undirected graphs ${\cal F}$, we let $\overline{f}(s,{\cal F})=\max\{\overline{f}(T,{\cal F})\colon T$ is an oriented tree on $s$ vertices$\}$ --- i.e., it is the minimum integer such that every ${\cal F}$-free oriented graph having chromatic number at least $\overline{f}(s,{\cal F})$ contains every oriented tree on $s$ vertices as an induced subgraph. Clearly, $f(s,\hat{\cal F})\leq \overline{f}(s,{\cal F})$. Theorem~\ref{thm:orientedgyarfas} implies that $\overline{f}(s,\mathcal{B}_r)\leq (r-1)(s-1)(s-2)+2s+1$. Thus, $\overline{f}(s,C_4)\leq s^2-s+3$.

\subsection{Notation}
We define some common notation that we shall use in our proofs.
Let $G$ be an undirected or directed graph.
An undirected edge between the vertices $u$ and $v$ of $G$ shall be denoted as $uv$, whereas an edge directed from $u$ to $v$ shall be denoted as $(u,v)$. Two vertices are said to be \emph{adjacent} in $G$ if there is a (directed or undirected) edge between them in $G$. The set of all vertices adjacent to a vertex $v$ in $G$ is called the \emph{neighbourhood} of \(v\) in $G$ and is denoted by \(N_G(v)\) (the subscript $G$ is omitted when the graph under consideration is clear). In a directed graph $G$, if $(u,v)\in E(G)$, then $v$ is said to be an \emph{out-neighbour} of $u$, and $u$ is said to be an \emph{in-neighbour} of $v$. For a directed graph $G$ and a vertex $v\in V(G)$, the \emph{out-neighbourhood} (resp. \emph{in-neighbourhood}) of $v$, i.e. the set of out-neighbours (resp. in-neighbours) of $v$ in $G$, is denoted by $N^+_G(v)$ (resp. $N^-_G(v)$). (Again, the subscript is omitted when the graph under consideration is clear from the context.) Note that for every vertex $v\in V(G)$, $N(v)=N^+(v)\cup N^-(v)$.
Also \emph {out-degree}  (resp. \emph {in-degree}) of a vertex $v$ is defined as $|N^+(v)|$ (resp. $|N^-(v)|$) and is denoted as $d^+(v)$ (resp. $d^-(v)$), with sometimes a subscript denoting the graph under consideration.  For a graph or digraph $G$ and a vertex $v\in V(G)$, we denote by $N_G[v]$ the \emph{closed neighbourhood} of $v$, i.e. $N_G[v]=N_G(v)\cup\{v\}$.
Given a coloring $\alpha$ of a (directed or undirected) graph $G$ and $S\subseteq V(G)$, we denote by $\alpha(S)$ the set $\{\alpha(v)\colon v\in S\}$.
For a graph (resp. digraph) and a set $S\subseteq V(G)$, we denote by $G[S]$ the subgraph (resp. subdigraph) of $G$ induced by the vertices $S$. We denote by $G-S$ the graph $G[V(G)\setminus S]$.
\section{Induced rainbow paths}
We first prove Lemma~\ref{lem:generallemma}, which will be our main tool for later results. In order to prove the lemma, we need the following Tur\'an-type results from extremal graph theory.

\medskip
\noindent Given a finite family of graphs \(\mathcal{L}\), let \(\text{\textbf{ex}}(n,\mathcal{L})\) denote the maximum number of edges that an \(\mathcal{L}\)-free graph \(G\) on \(n\) vertices can have.

\begin{lemma}[see Theorem~4.1 and Theorem~4.4 of~\cite{FurediAndSimonovits}] \label{lem:extremal}\leavevmode
	\begin{enumerate}
	\vspace{-0.05in}
	\renewcommand{\labelenumi}{(\roman{enumi})}
	\renewcommand{\theenumi}{(\roman{enumi})}
	\itemsep 0.01in
		\item \label{ext1} \(\textnormal{\textbf{ex}}(n,\{C_3,C_4,\dots,C_{2k}\}) < \frac{1}{2}n^{1+\frac{1}{k}} + \frac{1}{2}n.\)
		\item \label{ext2} \(\textnormal{\textbf{ex}}(n,\{C_3,C_4,\dots,C_{2k+1}\}) < \big(\frac{n}{2}\big)^{1+\frac{1}{k}} + \frac{1}{2}n.\)
	\end{enumerate}
\end{lemma}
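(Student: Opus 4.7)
Both inequalities follow from the classical Moore-bound strategy (in the spirit of Alon--Hoory--Linial and Bondy--Simonovits), combined with a convexity argument; I sketch part~\ref{ext1} in detail, since part~\ref{ext2} is entirely analogous. Let $G$ be an $n$-vertex graph with no cycle of length at most $2k$. Fix any vertex $v$, perform a BFS from $v$, and write $N_i(v)$ for the set of vertices at distance exactly $i$ from $v$. The girth hypothesis yields two structural observations: for $i\le k-1$ the set $N_i(v)$ is independent (an edge inside $N_i(v)$ would close a cycle of length at most $2i+1\le 2k-1$), and for $i\le k$ each vertex of $N_i(v)$ has a \emph{unique} neighbour in $N_{i-1}(v)$ (two such neighbours would close a cycle of length at most $2i\le 2k$). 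These facts force the BFS tree up to depth $k$ to be an actual tree, so $|N_{i+1}(v)|=\sum_{u\in N_i(v)}(d_G(u)-1)$ for $1\le i\le k-1$; writing $\delta$ for the minimum degree of $G$, this yields $|N_k(v)|\ge\delta(\delta-1)^{k-1}$, while trivially $|N_k(v)|\le n-1$.

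The edge bound is then obtained by pushing this inequality from the minimum degree to the average degree $\bar d=2m/n$. Since the function $x\mapsto x(x-1)^{k-1}$ is convex, summing $|N_k(v)|\le n-1$ over all $v\in V(G)$ and applying Jensen's inequality gives essentially $\bar d(\bar d-1)^{k-1}\le n-1$, which rearranges to $\bar d\le n^{1/k}+O(1)$ and hence $m=\tfrac{n\bar d}{2}\le\tfrac{1}{2}n^{1+1/k}+\tfrac{1}{2}n$. For part~\ref{ext2}, the same BFS is carried out simultaneously from the two endpoints $u,v$ of a fixed edge: the additional girth requirement (no cycle of length $2k+1$) forces the depth-$k$ BFS shells around $u$ and around $v$ to be disjoint, so the corresponding Moore estimate has roughly $n/2$ in place of $n$, producing the sharper main term $(n/2)^{1+1/k}$.

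The main obstacle will be the convexity step. A naive peeling argument---iteratively deleting a minimum-degree vertex and summing the edges lost at each step---yields only a bound of order $\tfrac{k}{k+1}n^{1+1/k}$; converting the minimum-degree estimate $\delta\lesssim n^{1/k}$ into a matching \emph{average}-degree estimate, which is what actually produces the sharp prefactor $\tfrac{1}{2}$, requires the Jensen step sketched above together with careful bookkeeping of the lower-order $\tfrac{1}{2}n$ term.
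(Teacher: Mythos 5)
The paper does not prove this lemma at all: it is imported verbatim from F\"uredi and Simonovits's survey (and rests, ultimately, on the Bondy--Simonovits theorem and the Alon--Hoory--Linial ``irregular Moore bound''), so there is no in-paper proof for you to match. That said, your proposal is the standard and correct route, so it is worth assessing on its own terms. The structural observations you list are all correct: for girth~$> 2k$, the BFS levels $N_i(v)$ with $i\le k-1$ are independent, every vertex of $N_i(v)$ with $i\le k$ has a unique parent in $N_{i-1}(v)$, the ball of radius $k$ is a tree, and hence $\delta(\delta-1)^{k-1}\le |N_k(v)|\le n-1$ gives the minimum-degree Moore bound $\delta\le n^{1/k}+1$; the symmetric two-root BFS for part~(ii) is also the right idea.

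The genuine gap is precisely where you flag it, but it is larger than you suggest. The step from the minimum-degree bound to the \emph{average}-degree bound $\bar d\le n^{1/k}+1$ (which is exactly what turns $m=n\bar d/2$ into $\tfrac12 n^{1+1/k}+\tfrac12 n$) is the Alon--Hoory--Linial theorem, and it does not follow from ``summing $|N_k(v)|\le n-1$ over $v$ and applying Jensen.'' The quantity $|N_k(v)|$ is \emph{not} a function of $d(v)$ alone; it is $\sum_{u\in N_{k-1}(v)}(d(u)-1)$, a nested sum over the degrees of the whole ball, so the summed inequality $\sum_v|N_k(v)|\le n(n-1)$ does not reduce to a convexity statement about a single-variable function of $d(v)$. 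Concretely, $\sum_v|N_k(v)|=2P_k$ where $P_k$ is the number of $k$-edge paths in $G$; for $k=2$ one indeed has $P_2=\sum_v\binom{d(v)}{2}$ and a single application of Jensen finishes the job, but already for $k=3$ one needs $\sum_{uw\in E}(d(u)-1)(d(w)-1)\ge m(\bar d-1)^2$, which involves the size-biased degree distribution of a random edge and is not a one-line Jensen step. The full Alon--Hoory--Linial argument uses a considerably more delicate convexity/entropy or non-backtracking-walk analysis, and your sketch neither reproduces it nor reduces to it. Your peeling remark is accurate (the naive deletion argument only gives a $\tfrac{k}{k+1}$ leading constant), but the fix you propose is asserted rather than carried out, so the proof as written does not establish the stated inequalities. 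Since the paper treats the lemma as a black box, the cleanest remedy is simply to cite Alon--Hoory--Linial (or Bondy--Simonovits for a weaker constant) rather than re-derive it.
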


\begin {definition}[Out-tree coloring]
Let $\alpha$ be a proper vertex coloring of an oriented graph $G$ and let $<$ be a total ordering of $\alpha(V(G))$. We say that $(\alpha,<)$ is an \emph{out-tree coloring} of $G$ if for every $v\in V(G)$, $\alpha(N^+(v))\supseteq\{i\in\alpha(V(G))\colon i<\alpha(v)\}$.  When the ordering $<$ on the colors is clear from the context, we shorten ``$(\alpha,<)$ is an out-tree coloring'' to just ``$\alpha$ is an out-tree coloring''.
\end {definition}

Recall that the girth of an oriented graph is simply the girth of its underlying undirected graph (see Section~\ref{sec:introduction}).

\begin{lemma}\label{lem:generallemma}
Let $F$ be an oriented graph and let $G$ be a subgraph of $F$. Suppose that $G$ has an out-tree coloring using at least $k$ colors. In each of the following cases, every out-tree tree $T$ on $s$ vertices is present as an induced rainbow subgraph of $G$, which is also an induced subgraph of $F$:
\begin{enumerate}
	\vspace{-0.05in}
	\renewcommand{\labelenumi}{(\roman{enumi})}
	\renewcommand{\theenumi}{(\roman{enumi})}
	\itemsep 0.01in
	\item \label{lembr} \(F\) is $\mathcal{B}^+_r$-free (\(r \geq 2\)) and \(k\geq (r-1)(s-1)(s-2)/2 + s\).
    \item \label{lemgirth} $F$ has girth $g\geq 5$ and \(k \geq s^{1+\frac{4}{g-4}}\).
\end{enumerate}
\end{lemma}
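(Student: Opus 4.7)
The plan is to construct the embedding $\phi\colon V(T)\to V(G)$ greedily, one vertex at a time. Enumerate $V(T)=\{u_1,u_2,\ldots,u_s\}$ in BFS order from the root $u_1$, so that each $u_i$ with $i\geq 2$ has parent $u_{p(i)}$ with $p(i)<i$, and write $\rho(v)$ for the rank of $\alpha(v)$ in the ordering $<$ (so $\rho$ takes values in $\{1,\ldots,k\}$). Initialize by letting $\phi(u_1)$ be any vertex with $\rho(\phi(u_1))=k$. At step $i\geq 2$, choose $\phi(u_i)\in N^+_G(\phi(u_{p(i)}))$ so that (a) $\alpha(\phi(u_i))$ is distinct from all colours already used and (b) $\phi(u_i)$ is non-adjacent in $F$ to every $\phi(u_\ell)$ with $\ell<i$, $\ell\neq p(i)$; subject to these, pick $\phi(u_i)$ of maximum rank. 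Two things must be verified at every step: that a valid choice exists, and that the rank drops slowly enough for future steps to succeed.

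For case~\ref{lembr}, the out-tree colouring ensures that $\phi(u_{p(i)})$ has, in $G$, at least one out-neighbour of every colour of rank below $\rho(\phi(u_{p(i)}))$, so the candidate pool contains at least $\rho(\phi(u_{p(i)}))-1$ distinct colours; at most $i-2$ of these are already used by non-parent embedded vertices. The $\mathcal{B}_r$-freeness of $F$ caps the blocking: if some previously embedded $\phi(u_\ell)$ with $\ell\neq p(i)$ were $F$-adjacent to $r$ or more out-neighbours of $\phi(u_{p(i)})$, then $\phi(u_{p(i)})$, $\phi(u_\ell)$, and those $r$ common neighbours would realise a copy of $K_{2,r}$ in $\hat{F}$ in which $\phi(u_{p(i)})$ has out-degree $r$, a forbidden member of $\mathcal{B}_r$. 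Hence each of the $i-2$ non-parent blockers disqualifies at most $r-1$ candidate vertices, so at most $(i-2)+(i-2)(r-1)$ candidates are forbidden below $\rho(\phi(u_{p(i)}))$. Standard bookkeeping then gives a rank drop per step whose telescoped total along the BFS traversal is bounded by $(r-1)(s-1)s/2+(s-1)$, strictly below $k$ under the assumed hypothesis; hence every step, including the last, admits a valid choice.

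For case~\ref{lemgirth} the greedy template is unchanged, but the blocking is controlled by the extremal estimate of Lemma~\ref{lem:extremal}. Since $F$ has girth at least $g$, any subgraph of $F$ on the out-neighbours of $\phi(u_{p(i)})$ together with the previously embedded vertices still has girth at least $g$, hence has only $O(n^{1+2/(g-2)})$ edges. Rather than applying this estimate separately per blocker (which would lose a factor of $s$), one applies it once to a carefully chosen auxiliary subgraph containing all previously embedded vertices simultaneously, obtaining an aggregate cap on the total number of blocked candidates. Balancing this aggregate blocking against the $s$ colour levels that must be consumed produces the exponent $1+4/(g-4)=g/(g-4)$ in the threshold $k\geq s^{1+4/(g-4)}$.

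The main obstacle is case~\ref{lemgirth}: unlike the per-vertex cap of $r-1$ available in case~\ref{lembr}, the girth hypothesis supplies only an aggregate edge bound, so a naive per-step application of Lemma~\ref{lem:extremal} is too wasteful. Executing the amortised argument correctly --- choosing the right auxiliary subgraph to which Lemma~\ref{lem:extremal} is applied and managing the interplay between the accumulated blocking budget and the colour ordering so that no step exhausts its pool of candidates --- is the technical heart of the proof.
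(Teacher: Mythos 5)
Your greedy template is close in spirit to the paper's argument (the paper also extends a partial induced copy one vertex at a time, using the designated out-neighbours $v^t$ and the same $\mathcal{B}_r$ blocking bound of $r-1$ per previously embedded vertex), but the accounting in case~\ref{lembr} does not work as you state it. You choose $\phi(u_i)$ of maximum rank below the \emph{parent's} rank, so the ranks need not decrease along the BFS order; the colour gaps created at different steps can then overlap, and there is no global "telescoped total along the BFS traversal" --- the only telescoping available is along each root-to-leaf path, with each ancestor step $j$ costing up to $1+(j-2)+(j-2)(r-1)=1+r(j-2)$ (used colours below the parent's rank do not disappear in your setup). In the worst case (a path, embedded in order) this requires $k>\sum_{j=2}^{s}\bigl(1+r(j-2)\bigr)=(s-1)+r\binom{s-1}{2}$, which exceeds the hypothesis $k\geq (r-1)\binom{s}{2}+s$ whenever $s>2r$ (e.g.\ $r=2$, $s=10$ needs $k\geq 82$ versus the assumed $55$). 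The paper's fix is exactly the ingredient you omit: the new vertex's colour is required to be smaller than the colour of the \emph{previously embedded} vertex (i.e.\ smaller than all colours used so far), so colours strictly decrease along the topological order, the unused colours split into pairwise disjoint gaps each of whose designated vertices $v_{p(j)}^t$ is blocked, and one gets the exact global identity $k=i+|X|$ together with $|X_j|\leq(r-1)(j-2)$ and $|X'|\leq(r-1)(i-1)$, which is what produces the constant $(r-1)s(s-1)/2+s$. Your claimed bookkeeping total of $(r-1)(s-1)s/2+(s-1)$ is asserted, not derived, and the natural derivation from your parent-relative rule gives a strictly worse bound.

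Case~\ref{lemgirth} is not proved at all: you explicitly defer "choosing the right auxiliary subgraph" and "the amortised argument", which is precisely the technical content of the paper's proof. Moreover, the object you gesture at (a subgraph of $F$, to which the girth-$g$ bound of Lemma~\ref{lem:extremal} applies) is not the right one. The paper's auxiliary graph $H$ lives on the $i$ embedded vertices and consists of the tree edges together with one \emph{new} edge per blocked designated vertex, joining the relevant parent $v_{p(j)}$ to a blocker $y\in N_F(v_{p(j)}^t)$; these edges are generally not edges of $F$ (indeed one must check $H$ is simple and edge-disjoint from $\hat F$ using $g\geq 5$), and $H$ is only guaranteed girth at least $g/2$, since each of its edges corresponds to a path of length at most $2$ in $F$. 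It is this weaker girth bound, applied via Lemma~\ref{lem:extremal} to $H$, that yields $|X|=|E'|<\bigl(i^{1+4/(g-4)}-i+2\bigr)/2$ and hence the exponent $1+4/(g-4)$; a bound of the form $O(n^{1+2/(g-2)})$ for girth-$g$ subgraphs of $F$, as in your sketch, does not lead to the stated threshold. The separate preliminary step showing $i\geq g$ (needed both to rule out short cycles through the partial tree and to absorb the additive error terms in the final estimate) is also missing. As written, the proposal identifies the shape of the argument but leaves out the constructions and counts that make both cases go through at the claimed bounds.
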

\begin{proof}
We assume that the given out-tree coloring $(\alpha,<)$ of $G$ uses natural numbers as colors, where the ordering $<$ between the colors is the usual ordering $<$ on natural numbers. For each vertex $v\in V(G)$ and $i\in\{1,2,\ldots,\alpha(v)-1\}$, we denote by $v^i$ an arbitrarily chosen vertex in $N_G^+(v)$ such that $\alpha(v^i)=i$ (such a vertex exists as $\alpha$ is an out-tree coloring of $G$).
Let \(T\) be any out-tree on \(s\) vertices and let \(w_1,w_2,\ldots,w_s\) be a topological ordering of \(T\). For each $i\in\{2,3,\ldots,s\}$, let $p(i)$ be the integer such that $w_{p(i)}$ is the parent of $w_i$ in $T$. Clearly $w_1$ corresponds to the root of $T$, and since the parent of every vertex appears before it in the ordering, we have $p(i)<i$.

Let $i\in\{1,2,\ldots,s\}$. Define $T_i=T[\{w_1,w_2,\ldots,w_i\}]$. Suppose that $T'_i$ is an induced tree in $G$ as well as $F$ that is isomorphic to $T_i$. Let $v_1,v_2,\ldots,v_i$ be the vertices of $T'_i$ corresponding to $w_1,w_2,\ldots,w_i$. Then clearly, for each $j\in\{2,3,\ldots,i\}$, $v_{p(j)}$ is the parent of $v_j$ in $T'_i$. We say that $T'_i$ is a ``good'' tree if $\alpha(v_1)=k$ and for $2\leq j\leq i$, 
\begin{enumerate}
\item $\alpha(v_j)<\alpha(v_{j-1})$, and
\item for $\alpha(v_j)<t<\alpha(v_{j-1})$, the subgraph induced by $\{v_1,v_2,\ldots,v_{j-1},v_{p(j)}^t\}$ in $F$ is not isomorphic to $T_j$. This means that in $F$, the vertex $v_{p(j)}^t$ has a neighbour other than $v_{p(j)}$ in $\{v_1,v_2,\ldots,v_{j-1}\}$, i.e. $N_F(v_{p(j)}^t)\cap (\{v_1,v_2,\ldots,v_{j-1}\}\setminus\{v_{p(j)}\})\neq\emptyset$.  (Refer Figure~\ref{fig:scheme} for an example that shows how a good tree might look like.)
\end{enumerate}

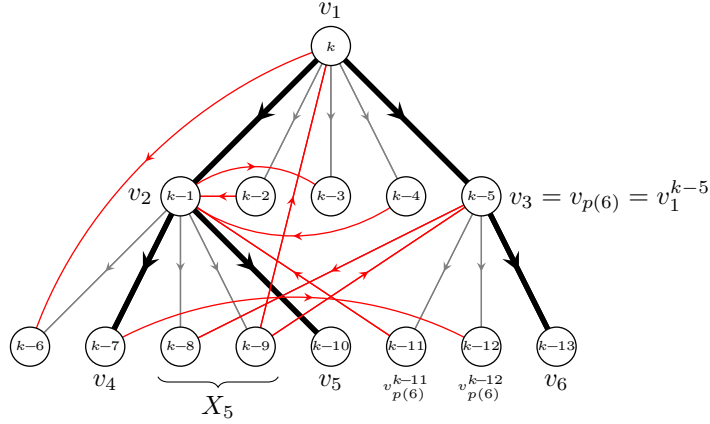
\begin{figure}
\begin{center}
\begin{tikzpicture}[scale=2]
\newcommand{\myptr}{{\arrow{stealth}}}
\renewcommand{\defradius}{0.13}
\renewcommand{\isdirected}{myptr}
\renewcommand{\vertexset}{(v1,5,5),(v2,4,4),(x1,4.5,4),(x2,5,4),(x3,5.5,4),(v3,6,4),(v4,3.5,3),(v5,5,3),(v6,6.5,3),(x4,3,3),(x5,4,3),(x6,4.5,3),(x7,5.5,3),(x8,6,3)}
\renewcommand{\edgeset}{(v1,v2,,2),(v1,v3,,2),(v1,x1,gray),(v1,x2,gray),(v1,x3,gray),(x1,v2,red),(v2,x2,red,,15),(x3,v2,red,,20),(v2,v4,,2),(v2,v5,,2),(v3,v6,,2),(v2,x4,gray),(v1,x4,red,,-25),(v2,x5,gray),(v2,x6,gray),(v3,x5,red),(x6,v1,red),(x6,v3,red),(v3,x7,gray),(x7,v2,red),(v3,x8,gray),(v4,x8,red,,25,,,0.55)}
\drawgraph
\node[above=7] at (5,5) {$v_1$};
\node at (5,5) {\tiny $k$};
\node[left=7] at (4,4) {$v_2$};
\node at (4,4) {\tiny $k\!\!-\!\!1$};
\node at (4.5,4) {\tiny $k\!\!-\!\!2$};
\node at (5,4) {\tiny $k\!\!-\!\!3$};
\node at (5.5,4) {\tiny $k\!\!-\!\!4$};
\node[right=7] at (6,4) {$v_3=v_{p(6)}=v_1^{k-5}$};
\node at (6,4) {\tiny $k\!\!-\!\!5$};
\node at (3,3) {\tiny $k\!\!-\!\!6$};
\node[below=7] at (3.5,3) {$v_4$};
\node at (3.5,3) {\tiny $k\!\!-\!\!7$};
\node at (4,3) {\tiny $k\!\!-\!\!8$};
\node at (4.5,3) {\tiny $k\!\!-\!\!9$};
\node[below=7] at (5,3) {$v_5$};
\node at (5,3) {\tiny $k\!\!-\!\!10$};
\node at (5.5,3) {\tiny $k\!\!-\!\!11$};
\node[below=7] at (5.5,3) {\tiny $v_{p(6)}^{k\!-\!11}$};
\node at (6,3) {\tiny $k\!\!-\!\!12$};
\node[below=7] at (6,3) {\tiny $v_{p(6)}^{k\!-\!12}$};
\node[below=7] at (6.5,3) {$v_6$};
\node at (6.5,3) {\tiny $k\!\!-\!\!13$};
\draw [decorate,decoration={brace,amplitude=4pt}] (4.65,2.8) -- (3.85,2.8);
\node at (4.25,2.6) {$X_5$};
\end{tikzpicture}
\end{center}
\caption{A possible way the good tree $T'_6$ might look like. The edges of $T'_6$ are drawn in bold. For each vertex $u$, its color $\alpha(u)$ is written inside it. For each $j\in\{2,3,\ldots,6\}$ and $\alpha(v_j)<t<\alpha(v_{j-1})$, a gray arc shows the edge from $v_{p(j)}$ to $v_{p(j)}^t$. A red edge shows an adjacency in $F$ between $v_{p(j)}^t$ and some vertex in $\{v_1,v_2,\ldots,v_{j-1}\}\setminus\{v_{p(j)}\}$.}\label{fig:scheme}
\end{figure}

Notice that every good tree is an induced rainbow subgraph of $G$.
In order to prove the statement of the lemma, we shall prove the stronger statement that there is a good tree isomorphic to $T_s=T$.
Suppose for the sake of contradiction that there is no good tree isomorphic to $T_s$. Let $i$ be the largest integer in $\{1,2,\ldots,s-1\}$ such that there is a good tree $T'_i$ isomorphic to $T_i$ (recall that there is a vertex $y\in V(G)$ such that $\alpha(y)=k$; this means that the single vertex $y$ is a good tree $T'_1$ isomorphic to $T_1$, and so $i$ exists).
As before, let $v_1,v_2,\ldots,v_i$ be the vertices of $T'_i$ corresponding to the vertices $w_1,w_2,\ldots,w_i$ of $T_i$. For $2\leq j\leq i$, define $X_j=\left\{v_{p(j)}^{\alpha(v_j)+1},v_{p(j)}^{\alpha(v_j)+2},\ldots,v_{p(j)}^{\alpha(v_{j-1})-1}\right\}$ and $X'=\left\{v_{p(i+1)}^1,v_{p(i+1)}^2,\ldots,v_{p(i+1)}^{\alpha(v_i)-1}\right\}$. It can be seen that these sets are well defined as follows. Let $j\in\{2,3,\ldots,i\}$. Recall that $p(j)<j$, which means that $p(j)\leq j-1$. Since $T'_i$ is a good tree, we then get that $\alpha(v_{p(j)})\geq\alpha(v_{j-1})>\alpha(v_j)$. Then since $\alpha$ is an out-tree coloring, we can conclude that $v_{p(j)}$ has at least one out-neighbour having each color less than $\alpha(v_{j-1})$. Thus, each of the vertices $v_{p(j)}^{\alpha(v_{j-1})-1}, v_{p(j)}^{\alpha(v_{j-1})-2},\ldots,v_{p(j)}^{\alpha(v_j)+2},v_{p(j)}^{\alpha(v_j)+1}$ exist. Therefore, the set $X_j$ is well defined for each $j\in\{2,3,\ldots,i\}$ (one such set is marked in Figure~\ref{fig:scheme} as an illustration). It can be seen in a similar way that the set $X'$ is also well defined.

We now define $X=X'\cup\bigcup_{j=2}^i X_j$. (It is not difficult to see that we always have $X_2=\emptyset$, although we do not use this fact.) Note that for each color in $\{1,2,\ldots,k\}\setminus\{\alpha(v_1),\alpha(v_2),\ldots,\alpha(v_i)\}$, the set $X$ contains exactly one vertex having that color. Therefore, we have 

\begin{equation}\label{eq1}
    k = i + |X|
\end{equation}

As $T'_i$ is a good tree, we know that for every vertex $x\in X_j$, where $2\leq j\leq i$, $N_F(x)\cap(\{v_1,v_2,\ldots,v_{j-1}\}\setminus\{v_{p(j)}\})\neq\emptyset$.
If for some integer $t\in\{1,2,\ldots,\alpha(v_i)-1\}$, we have $N_F(v_{p(i+1)}^t)\cap(\{v_1,v_2,\ldots,v_i\}\setminus\{v_{p(i+1)}\})=\emptyset$, then taking $t$ to be the largest such integer, we have that $G[\{v_1,v_2,\ldots,v_i\}\cup\{v_{p(i+1)}^t\}]$ is a good tree isomorphic to $T_{i+1}$, which contradicts our choice of $i$. We can therefore conclude that for every $x\in X'$, $N_F(x)\cap(\{v_1,v_2,\ldots,v_i\}\setminus\{v_{p(i+1)}\})\neq\emptyset$.
\medskip

\noindent \textbf{Proof of~\ref{lembr}}
\medskip

\noindent
Let $j\in\{2,3,\ldots,i\}$. Since \(F\) is \(\mathcal{B}^+_r\)-free (\(r \geq 2\)), any vertex $z\in\{v_1,v_2,\ldots,v_{j-1}\}\setminus\{v_{p(j)}\}$ can have at most $r-1$ vertices in $X_j$ as neighbours in $F$, as otherwise there is a subgraph in $F$ that is isomorphic to a graph in $\mathcal{B}^+_r$,  whose vertex set is $\{v_{p(j)},z\}$ together with $r$ vertices in $N_F(z)\cap X_j$.
As every vertex in $X_j$ has some neighbour in $\{v_1,v_2,\ldots,v_{j-1}\}\setminus\{v_{p(j)}\}$, we can conclude that $|X_j|\leq (r-1)(j-2)$. Similarly, no vertex in $\{v_1,v_2,\ldots,v_i\}\setminus\{v_{p(i+1)}\}$ can have $r$ neighbours in $X'$ (in $F$), implying that $|X'|\leq (r-1)(i-1)$. We thus get:

\[|X| \leq (r-1)(i-1)+\sum_{j=2}^i (r-1)(j-2) = (r-1)(i)(i-1)/2\] 

\noindent By plugging this into Equation~(\ref{eq1}),
we get:
\[k \: \leq \: i + \frac{(r-1)(i)(i-1)}{2} \]

\noindent
As $i\leq s-1$, this gives:
$$k\leq \frac {(r-1)(s-1)(s-2)} {2}  + s -1$$
We now have a contradiction to the assumption that $k\geq\frac {(r-1)(s-1)(s-2)} {2}  + s$ and this completes the proof.
\medskip

\noindent \textbf{Proof of~\ref{lemgirth}}
\medskip

\noindent
Recall that we had defined a ``path'' or ``cycle'' in an oriented graph to be a path or cycle, respectively, in its underlying undirected graph (see Section~\ref{sec:introduction}).
Let us first show that $i\geq g$. Suppose that $i<g$.
If there exists $x\in X_j$ for some $j\in\{2,3,\ldots,i\}$, then we know that there exists $z\in N_F(x)\cap\{v_1,v_2,\ldots,v_{j-1}\}\setminus\{v_{p(j)}\}$. But then the path in $T'_i$ between $z$ and $v_{p(j)}$ (this path contains only vertices in $\{v_1,v_2,\ldots,v_{j-1}\}$) together with $x$ forms a cycle of length at most $j\leq i<g$ in $F$. Since this contradicts the fact that $g$ is the girth of $F$, we can conclude that $X_j=\emptyset$ for $2\leq j\leq i$. Further, if there exists $x\in X'$, then since it has as neighbours the vertex $v_{p(i+1)}$ and also a vertex $z\in N_F(x)\cap\{v_1,v_2,\ldots,v_i\}\setminus\{v_{p(i+1)}\}$, we can conclude that the path in $T'_i$ between $v_{p(i+1)}$ and $z$ contains at least $g-2\geq i-1$ edges. Since $T'_i$ contains only $i$ vertices, this means that $\hat{T'_i}$ is the path $v_1,v_2,\ldots,v_i$ and that $N_F(x)\cap\{v_1,v_2,\ldots,v_i\}=\{v_1,v_i\}$. Further, if there are two vertices $x,y\in X'$, this means that $x,v_1,y,v_i,x$ is a cycle of length 4 in $F$, which is a contradiction to the fact that $g\geq 5$. This implies that $|X'|\leq 1$. Then $|X|\leq 1$, which by Equation~(\ref{eq1}) means that $i\geq k-1$. From the statement of the lemma, we have $k\geq s+1$ as $k$ and $s$ are integers, implying that \(i\geq s\), contradicting the assumption that $i<s$. We can therefore assume that $i\geq g$.

We shall construct an auxiliary undirected graph \(H\) with \(V(H) = V(T'_i)\) and \(E(H) = E(\hat{T'_i}) \cup E'\), where \(E'\) is a set of edges defined as follows. For $j\in\{2,3,\ldots,i\}$ and vertex \(x \in X_j\), the edge \(v_{p(j)}y\) is added to \(E'\), where $y$ is an arbitrarily chosen vertex in $N_F(x)\cap (\{v_1,v_2,\ldots,v_{j-1}\}\setminus\{v_{p(j)}\})$. Note that if $v_{p(j)}$ and $y$ are adjacent in $F$, then $v_{p(j)},x,y$ form a triangle in $F$, contradicting the fact that $g(F)=g\geq 5$. Also, no other neighbour of $v_{p(j)}$ can be adjacent to $y$ in $F$, since otherwise there will be a cycle of length four in $F$, again contradicting the fact that $g(F)=g\geq 5$. These two observations together imply that we never add an edge of $E(\hat{F})$ to $E'$ and also that we never add the same edge twice to $E'$. Similarly, for each $x\in X'$, the edge $v_{p(i+1)}y$ is added to $E'$, where $y$ is an arbitrarily chosen vertex in $N_F(x)\cap (\{v_1,v_2,\ldots,v_i\}\setminus\{v_{p(i+1)}\})$. Again, arguing as before, we can see that we never add the same edge twice to $E'$ and that $E'$ and $E(\hat{F})$ are disjoint. We can therefore conclude that $H$ is a simple graph and that \(|E'| = |X'|+\sum_{j=2}^i |X_j|= |X|\).

Now, we claim that \(g(H) \geq g/2\). Given any cycle \(C\) of \(H\), each edge of \(C\) can be replaced by at most 2 edges of \(F\) (which in effect is like subdividing that edge) to obtain a closed walk (in fact, a cycle) in $F$ containing at most $2|E(C)|$ edges. This can be seen as follows. Let $C=u_0,u_1,\ldots,u_{l-1},u_0$ be a cycle in $H$ and let the edges of $E'$ that belong to $C$ be $u_{j_1}u_{j_1+1},u_{j_2}u_{j_2+2},\ldots,u_{j_q}u_{j_q+1}$ where $j_1<j_2<\cdots<j_q$ and subscripts are modulo $l$. Recalling the definition of $E'$, we know that for each $t\in\{1,2,\ldots,q\}$, there exists a vertex $x_t\in X_2\cup X_3\cup\cdots\cup X_i\cup X'$ such that $u_{j_t}x_t,u_{j_t+1}x_t\in E(F)$. Then $C'=u_0,u_1,\ldots,u_{j_1},x_t,u_{j_1+1},\ldots,u_{j_2},x_2,u_{j_2+1},\ldots,u_{j_q},x_q,u_{j_q+1},\ldots,u_{l-1},u_0$ is a closed walk in $F$ that contains at most $2l$ edges. Thus, if \(g(H) < g/2\), then there exists a cycle in \(F\) of length less than \(g\), which is a contradiction.
\medskip

\noindent From Lemma~\hyperref[ext1]{\ref{lem:extremal}\ref{ext1}}, if \(g(H)\) is odd, we get (note that if $g(H)=3$, then the inequality below holds trivially): 
\[|E(H)| \  < \  \frac{i^{1+2/(g(H)-1)} + i}{2} \  \leq \  \frac{i^{1+4/(g-2)} + i}{2} \ < \ \frac{i^{1+4/(g-4)} + i}{2}\]
\medskip

\noindent From Lemma~\hyperref[ext2]{\ref{lem:extremal}\ref{ext2}}, if \(g(H)\) is even, we get: 
\[|E(H)| \  < \  \bigg(\frac{i}{2}\bigg)^{1+2/(g(H)-2)} + \frac{i}{2} \  \leq \  \bigg(\frac{i}{2}\bigg)^{1+4/(g-4)} + \frac{i}{2}  \ < \ \frac{i^{1+4/(g-4)} + i}{2}\]
\medskip

\noindent Combining both the cases, we get 
\[|E(H)| \ < \ \frac{i^{1+4/(g-4)} + i}{2}\]

\noindent Since \(|E(H)| = |E'|+(i-1)\), we have:
\[|E'| < \frac{i^{1+4/(g-4)} - i + 2}{2}\]

\noindent Since \(|E'| = |X|\), by plugging the above inequality into Equation~(\ref{eq1}),
we get:
\begin{eqnarray*}
k \: &<& \: i + \frac{i^{1+4/(g-4)} - i + 2}{2}=\frac{i(1 + 2/i+i^{4/(g-4)})}{2}\\
&\leq&\frac{i(e^{2/i}+i^{4/(g-4)})}{2}\qquad\mbox{(since }1+2/i\leq e^{2/i}\mbox{)}\\
&\leq&\frac{i(e^{2/(g-4)}+i^{4/(g-4)})}{2}\qquad\mbox{(since $i\geq g\geq g-4$)}\\
&=&\frac{i(\sqrt{e}^{4/(g-4)}+i^{4/(g-4)})}{2}
\end{eqnarray*}
By our assumption that $i\geq g$, we get that \(i \geq 2 > \sqrt{e}\). Using this in the above inequality, we get:
\[k < i^{1+4/(g-4)}\]
Now by using the bound on $k$ given in the statement of the lemma, we get:
$$ s^{1+4/(g-4)} \leq k < i^{1+4/(g-4)}$$
This implies that $i\geq s$, which is a contradiction.
\end{proof}

The following corollary simply states the case when $F=G$ in Lemma~\ref{lem:generallemma}.
\begin{corollary}\label{cor:general}
Let $G$ be an oriented graph that has an out-tree coloring using at least $k$ colors. Then in each of the following cases, every out-tree on $s$ vertices is present as an induced rainbow subgraph of $G$:
\begin{enumerate}
\vspace{-0.05in}
\renewcommand{\labelenumi}{(\roman{enumi})}
\renewcommand{\theenumi}{(\roman{enumi})}
\itemsep 0.01in
\item \label{corbr} \(G\) is $\mathcal{B}^+_r$-free (\(r \geq 2\)) and \(k\geq (r-1)(s-1)(s-2)/2 + s\).
\item \label{corgirth} $G$ has girth $g\geq 5$ and \(k \geq s^{1+\frac{4}{g-4}}\).
\end{enumerate}
\end{corollary}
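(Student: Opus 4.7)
The proof proposal is essentially immediate: this corollary is exactly the specialization of Lemma~\ref{lem:generallemma} obtained by taking the ambient graph $F$ to equal the subgraph $G$. The plan is simply to invoke Lemma~\ref{lem:generallemma} with this choice and verify that the hypotheses transfer.

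\textbf{Step 1 (setup).} Let $G$ be as in the corollary, with an out-tree coloring using at least $k$ colors. Set $F := G$. Then trivially $G$ is a subgraph (in fact, all of) $F$, and the out-tree coloring of $G$ is an out-tree coloring of the subgraph $G \subseteq F$ using at least $k$ colors, which is the hypothesis required by Lemma~\ref{lem:generallemma}.

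\textbf{Step 2 (transferring the case hypotheses).} In case~\ref{corbr}, we are told that $G$ is $\mathcal{B}_r$-free and $k \geq (r-1)(s-1)s/2 + s$; since $F = G$, this matches hypothesis~\ref{lembr} of the lemma verbatim. Similarly, in case~\ref{corgirth}, $G$ has girth $g \geq 5$ and $k \geq s^{1 + 4/(g-4)}$, matching hypothesis~\ref{lemgirth} of the lemma verbatim with $F = G$.

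\textbf{Step 3 (conclusion).} Lemma~\ref{lem:generallemma} guarantees, in either case, that every out-tree $T$ on $s$ vertices appears as an induced rainbow subgraph of $G$ that is also induced in $F$. Since $F = G$, the second conjunct is automatic, and we obtain exactly the conclusion of the corollary: every out-tree on $s$ vertices is an induced rainbow subgraph of $G$. There is no genuine obstacle here, since the corollary is just repackaging the lemma without needing to exploit the distinction between the ambient graph and its subgraph; the work has already been carried out in the proof of Lemma~\ref{lem:generallemma}.
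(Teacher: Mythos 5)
Your proposal is correct and is exactly what the paper does: the corollary is stated there as simply the case $F=G$ of Lemma~\ref{lem:generallemma}, and your verification that the hypotheses and conclusion transfer verbatim under this specialization is all that is needed.
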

\bigskip

Recall that our intention is to prove Theorem~\ref{thm:rainbowpaths}. Given a graph $G$ with a proper vertex coloring $\beta$ and a total ordering $<$ on the set of
colors, a convenient orientation of $G$ that we can consider is the
natural orientation $D$ of $G$ with respect to $(\beta, <)$, as we had discussed in Section~\ref{sec:ourresults}. If $(\beta,<)$ were an \emph{out-tree coloring} of $D$, we could have immediately
inferred that there is an induced rainbow path on $s$ vertices in $G$ by applying Corollary~\ref{cor:general} on $D$, setting $T$ to be a directed path on $s$ vertices. 
But note that it is possible that $(\beta,<)$ is not an out-tree coloring of $D$ for any choice of the total ordering $<$ of the colors of $\beta$.

To overcome this difficulty, we recolor $G$ using the following Greedy Refinement Algorithm developed and used in~\cite{Manu} (this algorithm is adapted from a folklore recoloring procedure). The algorithm takes as input a graph with a proper coloring $\beta$ where some total order on  the set of colors is also given. The algorithm gives a new proper coloring $\alpha$ as output. 

\medskip
\begin{algorithm}[H]
	\label{GreedyRefinement}
	\SetAlgoLined
	\SetKwInOut{Input}{Input}
	\SetKwInOut{Output}{Output}
	\Input{A graph \(G=(V,E)\) with a proper coloring \(\beta\) and an ordering $<$ of the colors used by $\beta$.}
	\Output{The graph \(G=(V,E)\) with a refined coloring \(\alpha\)}
	\BlankLine
	Let $\beta_1,\beta_2,\ldots,\beta_h$ be the colors used by $\beta$, where $\beta_1<\beta_2< \cdots<\beta_h$.\medskip\\
	\ForEach{\(v \in V(G)\)}
	{\(\alpha(v) \gets 0\)} 
	\For{\(i \gets 1\) \KwTo \(h\)}
	{
		\ForEach{\(v\in V(G)\) \textnormal{such that} \(\beta(v)=\beta_i\) \textnormal{and} \(\alpha(v)=0\)}
		{
			\(\alpha(v)\leftarrow\min (\mathbb{N}^{>0} \setminus \{\alpha(u)\colon u\in N(v)\})\) 
			
			(i.e., \(\alpha(v)\) is set equal to the least positive integer that has not already been assigned to a neighbour of \(v\)).
		}
	}
	\caption{Greedy Refinement Algorithm}
\end{algorithm}
\bigskip


\begin{proposition}[\cite{Manu}]\label{prop1}
For every vertex \(v\), there exists a subset \(S=\{u_1,u_2, \dots, u_{j-1}\}\) of \(N(v)\), where $j=\alpha(v)$, such that for \(1 \leq i \leq j-1\), \(\alpha(u_i)=i\) and \(\beta(u_i) < \beta(v)\).
\end{proposition}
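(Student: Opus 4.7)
The plan is to unpack exactly what the Greedy Refinement Algorithm does at the moment it assigns $\alpha(v) = j$. Suppose $v$ is processed during iteration $i$ of the outer loop, where $\beta(v) = \beta_i$. At that instant, the algorithm sets $\alpha(v)$ equal to the smallest positive integer not present in $\{\alpha(u) \colon u \in N(v)\}$. If this value equals $j$, then each of $1, 2, \ldots, j-1$ must already appear in this set of neighbour-colors. Consequently, for every $i \in \{1, 2, \ldots, j-1\}$ we may pick a neighbour $u_i \in N(v)$ with $\alpha(u_i) = i$, and these $u_i$ are clearly distinct since $\alpha$ assigns them distinct values.

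The remaining task is to certify that $\beta(u_i) < \beta(v)$ for each such $u_i$. For this I would argue as follows. A neighbour $u$ of $v$ receives a nonzero $\alpha$-value before $v$ is processed only if $u$ was handled in an iteration $k$ with $k \leq i$. However, $u$ cannot have $\beta(u) = \beta_i = \beta(v)$: because $\beta$ is a proper coloring, adjacent vertices receive distinct $\beta$-colors, so the set of vertices handled within any single iteration forms an independent set that cannot contain two adjacent vertices like $v$ and $u$. Hence every neighbour of $v$ with $\alpha$-value already set at the moment $v$ is processed must have been handled in an iteration $k < i$, i.e., must satisfy $\beta(u) = \beta_k < \beta_i = \beta(v)$. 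Applying this to each $u_i$ yields $\beta(u_i) < \beta(v)$, completing the argument.

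There is no genuine obstacle here; the whole proof is a direct reading of the algorithm's invariants together with the fact that $\beta$ is proper. The only subtle point worth flagging explicitly is the distinction between \emph{iteration $i$ in which $v$ is processed} and \emph{all of the previous iterations}: one has to rule out a same-iteration neighbour contributing to $\{\alpha(u) \colon u \in N(v)\}$ at the moment of processing, and this is precisely where the properness of $\beta$ is used.
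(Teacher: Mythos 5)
Your proof is correct and follows essentially the same route as the paper's: at the moment $v$ receives $\alpha(v)=j$, every color $1,\dots,j-1$ already appears on a neighbour, and properness of $\beta$ rules out an equal-$\beta$ (same-iteration) neighbour, giving $\beta(u_i)<\beta(v)$. The only detail the paper states explicitly that you leave implicit is that the algorithm colors each vertex exactly once, so the set $S$ observed when $v$ is processed is still present in the final coloring $\alpha$.
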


\begin{proof}
Let \(v\) be any vertex with \(\alpha(v)=j\). From Algorithm \ref{GreedyRefinement}, it is clear that at the point in time when \(v\) was colored with \(j\), there existed a subset \(S=\{u_1,u_2,\ldots, u_{j-1}\}\) of \(N(v)\) such that \(\alpha(u_i)=i\) for each $i\in\{1,2,\ldots,j-1\}$. Because each such \(u_i \in S\) should have been colored before \(v\), we have \(\beta(u_i) \leq\beta(v)\), and since $u_i\in N(v)$, we further have $\beta(u_i)<\beta(v)$. As Algorithm \ref{GreedyRefinement} colors each vertex only once, the subset \(S\) of \(N(v)\) exists even when the algorithm terminates.
\end{proof}

The following proposition is easy to see.

\newcommand{\refinement}{\ensuremath{\mathrm{refinement}}}
\begin{proposition}\label{prop:optimal}
The coloring $\alpha$ generated by the Greedy Refinement Algorithm (Algorithm~\ref {GreedyRefinement}) always uses at most as many colors as that used by the input coloring $\beta$.
\end{proposition}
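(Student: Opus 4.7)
The plan is to prove by induction on $i \in \{1, 2, \ldots, h\}$, where $\beta_1 < \beta_2 < \cdots < \beta_h$ are the colors used by $\beta$, the sharper claim that every vertex $v$ with $\beta(v) = \beta_i$ satisfies $\alpha(v) \le i$. Once this is established, it follows immediately that $\alpha(V(G)) \subseteq \{1, 2, \ldots, h\}$, so $\alpha$ uses at most $h$ colors, which is exactly $|\beta(V(G))|$.

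For the base case $i = 1$: when the first iteration of the outer loop runs, every vertex still has $\alpha$-value $0$, and for any $v$ with $\beta(v) = \beta_1$, properness of $\beta$ rules out a neighbor $u$ with $\beta(u) = \beta_1$, while no neighbor satisfies $\beta(u) < \beta_1$. Hence no neighbor of $v$ has yet been assigned a positive $\alpha$-value, so the algorithm assigns $\alpha(v) = 1$.

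For the inductive step: consider $v$ with $\beta(v) = \beta_i$ at the moment the algorithm is about to color it. Any neighbor $u$ of $v$ with $\alpha(u) > 0$ must have been processed in some earlier iteration, so $\beta(u) = \beta_j$ for some $j < i$; properness of $\beta$ forbids $\beta(u) = \beta_i$, which also shows that distinct vertices processed during iteration $i$ itself cannot be adjacent and so do not interfere with one another. By the inductive hypothesis, $\alpha(u) \le j \le i - 1$, so $\{\alpha(u) \colon u \in N(v)\} \cap \mathbb{N}^{>0} \subseteq \{1, 2, \ldots, i - 1\}$, and the minimum positive integer avoiding this set is at most $i$.

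The proof is a short induction, and there is essentially no obstacle: the only delicate point is to verify that inside a single outer-loop iteration the order in which the vertices of $\beta$-class $\beta_i$ are processed does not matter, which is immediate from the properness of $\beta$. Once this is noted, the inductive step collapses to the observation that neighbors already colored contribute only $\alpha$-values in $\{1, \ldots, i-1\}$.
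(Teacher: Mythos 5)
Your proof is correct: the paper states this proposition without proof (``easy to see''), and your induction on the color index $i$ --- showing the sharper fact that $\alpha(v)\le i$ whenever $\beta(v)=\beta_i$, using properness of $\beta$ to guarantee that only vertices from earlier $\beta$-classes can contribute positive $\alpha$-values when $v$ is colored --- is exactly the standard greedy argument the authors have in mind. The only delicate point, that vertices within the same outer-loop iteration cannot interfere because they are pairwise non-adjacent, is handled correctly.
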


The coloring $\alpha$ generated by Algorithm~\ref{GreedyRefinement} will be referred to as $\refinement(\beta,<)$, the ``refinement'' of the proper coloring $\beta$ with respect to the total
order $<$ on the set of colors. Note that the colors used by $\alpha$ are from $\mathbb{N}^{>0}$ and unless otherwise specified, the color set of $\alpha$ is ordered by the usual order on natural numbers. 
Note that $\alpha =\refinement(\beta,<)$ is an out-tree coloring of $D$, the natural orientation of $G$ with respect to $\alpha$. This follows from Proposition~\ref{prop1} and the definition of
natural orientation (note that the vertices $u_1,u_2, \ldots, u_{\alpha(v)-1}$ that are guaranteed to exist for every vertex $v$ in Proposition~\ref{prop1} are all out-neighbours of $v$ in $D$).

\begin{definition}[Decreasing tree]\label{def:dectree}
Given an undirected graph $G$ with a proper vertex coloring $\beta$ and an ordering of the colors of $\beta$, a tree  $T$ in $G$ with a chosen root is said to be a \emph{decreasing tree} in $G$ if the colors on every path from root to leaf in $T$ are in decreasing order. If in addition, $T$ is a path and one of the endpoints of this path is the root of $T$, then $T$ is said to be a \emph{decreasing path} in $G$. Clearly, a decreasing path in $G$ is also a rainbow path in $G$.
\end{definition}

\begin{lemma}\label{lem:decreasing_tree_lemma}
Let $G$ be an undirected graph with a proper coloring $\beta$ on whose colors a total order $<$ is defined. Then in each of the following cases, every rooted tree on $s$ vertices is present in $G$ as an induced decreasing tree:
\begin{enumerate}
\vspace{-0.05in}
\renewcommand{\labelenumi}{(\roman{enumi})}
\renewcommand{\theenumi}{(\roman{enumi})}
\itemsep 0.01in
\item \(G\) is \(K_{2,r}\)-free (\(r \geq 2\)) and \(\chi(G)\geq (r-1)(s-1)(s-2)/2 + s\).
\item $G$ has girth \(g \geq 5\) and \(\chi(G) \geq s^{1+4/(g-4)}\).
\end{enumerate}
\end {lemma}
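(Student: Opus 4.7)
The plan is to reduce this statement to Lemma~\ref{lem:generallemma} via the Greedy Refinement Algorithm, with Proposition~\ref{prop1} supplying the extra information needed to make the resulting induced tree decreasing with respect to the original coloring $\beta$.

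First I would run the Greedy Refinement Algorithm on $(G,\beta,<)$ to obtain $\alpha=\refinement(\beta,<)$, and then form the natural orientation $D$ of $G$ with respect to $(\alpha,<_{\mathbb{N}})$. As observed in the text following Proposition~\ref{prop:optimal}, $(\alpha,<_{\mathbb{N}})$ is then an out-tree coloring of $D$. Since $\alpha$ is a proper coloring of $G$, it uses at least $\chi(G)$ colors. Moreover, because $\hat{D}=G$, the orientation $D$ is $\mathcal{B}_r$-free whenever $G$ is $K_{2,r}$-free (any $\mathcal{B}_r$ subgraph of $D$ would underlie a $K_{2,r}$ in $G$), and $D$ has the same girth $g$ as $G$.

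Given any rooted tree $T$ on $s$ vertices, I would orient it as an out-tree $\vec{T}$ by directing every edge away from the prescribed root. I would then apply Lemma~\ref{lem:generallemma} to $\vec{T}$, taking both $F$ and the out-tree-colored subgraph in the lemma to equal $D$ (this is the corollary-type case $F=G=D$). In case (i) the hypothesis $\chi(G)\ge (r-1)(s-1)s/2+s$ supplies the required $k$; in case (ii) the hypothesis $\chi(G)\ge s^{1+4/(g-4)}$ does the same. The lemma therefore produces a subset $\{v_1,\ldots,v_s\}\subseteq V(D)$ inducing a copy of $\vec{T}$ in $D$, which is simultaneously an induced copy of the underlying rooted tree $T$ in $G$, rooted at the vertex $v_1$ with $\alpha(v_1)=k$.

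The one step requiring care — which I expect to be the only real content beyond invoking Lemma~\ref{lem:generallemma} — is arranging for this induced copy to be a \emph{decreasing} tree in the $\beta$-coloring. Inspecting the proof of Lemma~\ref{lem:generallemma}, each non-root vertex $v_j$ of the good tree is chosen as some $v_{p(j)}^{t}$, i.e.\ an arbitrary out-neighbour of $v_{p(j)}$ in $D$ having a specified $\alpha$-color. I would replace this arbitrary choice by the canonical witness supplied by Proposition~\ref{prop1}: for each vertex $v$ and each $i\in\{1,\dots,\alpha(v)-1\}$, Proposition~\ref{prop1} guarantees a neighbour $u_i$ of $v$ with $\alpha(u_i)=i$ and $\beta(u_i)<\beta(v)$, and such a $u_i$ is automatically an out-neighbour of $v$ in $D$. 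Defining $v^i:=u_i$ and re-running the argument of Lemma~\ref{lem:generallemma} verbatim, every parent–child pair $(v_{p(j)},v_j)$ in the resulting induced tree now satisfies $\beta(v_j)<\beta(v_{p(j)})$, so $\beta$-colors strictly decrease along every root-to-leaf path. Hence the induced copy is a decreasing tree in $G$, which completes the proof.
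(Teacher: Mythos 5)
Your high-level plan is right, and you correctly identified that Proposition~\ref{prop1}'s $\beta$-decreasing witnesses are what can make the resulting tree decreasing. But the step ``define $v^i := u_i$ and re-run the argument of Lemma~\ref{lem:generallemma} verbatim'' does not quite close the loop. The lemma's proof is by contradiction: it picks the largest $i$ for which \emph{some} good tree $T'_i$ exists and derives a contradiction, concluding that a good tree of size $s$ exists without constructing one. Moreover, the definition of ``good tree'' never asserts that each $v_j$ equals the chosen witness $v_{p(j)}^{\alpha(v_j)}$ --- it only constrains the witnesses $v_{p(j)}^t$ for colors $t$ \emph{strictly between} $\alpha(v_j)$ and $\alpha(v_{j-1})$. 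So the good tree of size $s$ whose existence is established need not be built from your canonical witnesses, and you cannot conclude that its edges are $\beta$-decreasing. To salvage your route you would have to reopen the lemma's proof and either strengthen the good-tree definition to force $v_j = v_{p(j)}^{\alpha(v_j)}$ for $j \geq 2$, or recast the argument as a greedy construction starting from the color-$k$ vertex; both are doable, but neither is ``verbatim.''

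The paper avoids this entirely by using the full generality of Lemma~\ref{lem:generallemma}, which is stated for a graph $G$ that is a \emph{subgraph} of $F$ precisely for this purpose, rather than the corollary-style case $F = G$. It sets $F = D$ (the natural orientation of the input graph w.r.t.\ $\alpha$) and $G = D'$, where $D'$ is the spanning subgraph of $D$ containing exactly the edges $(u,v)$ with $\beta(u) > \beta(v)$. Proposition~\ref{prop1} shows that $\alpha$ is still an out-tree coloring of $D'$, while the $K_{2,r}$-freeness or girth hypothesis is checked against $F = D$. The lemma then produces an out-tree induced in both $D'$ and $D$, and since every edge of $D'$ is $\beta$-decreasing by construction of $D'$, the out-tree is automatically a decreasing tree in $G$. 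This treats Lemma~\ref{lem:generallemma} as a black box instead of modifying its internals, which is exactly what the $F$/$G$ distinction in that lemma is for.
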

\begin{proof}
Let $T$ be a rooted tree on $s$ vertices. Assign directions to the edges of $T$ so that we get an out-tree $T'$ with the same root as $T$. Let $\alpha=\refinement(\beta,<)$ be a coloring that uses the colors $\{1,2,\ldots,k\}$. Clearly, $k\geq \chi(G)$. Let $D$ be the natural orientation of $G$ with respect to $\alpha$. Let $D'$ be the subgraph of $D$ that has vertex set $V(D')=V(D)$ and edge set $E(D')=\{(u,v)\in E(D)\colon\beta(u)>\beta(v)\}$. By Proposition~\ref{prop1}, we know that $\alpha$ is an out-tree coloring of $D'$ as well. Since $V(D')=V(D)$, we know that there is at least one vertex $x$ in $D'$ having $\alpha(x)=k$. 
It can be seen that if one of the conditions in the statement of the lemma is satisfied, then Lemma~\ref{lem:generallemma} can be applied to $D$, $D'$ and $T'$ in order to conclude there is an induced subgraph of $D'$, which is also induced in $D$, that is isomorphic to $T'$. Since the colors in $\beta$ decrease along any directed path in $D'$, we have that the induced subgraph of $D$ that is isomorphic to $T'$ is a decreasing tree with respect to $(\beta,<)$.
\end{proof}
\medskip

Theorem~\ref{thm:rainbowpaths} follows from Lemma~\ref{lem:decreasing_tree_lemma} in a straightforward way. 

\rainbowpaththm*

\begin{proof}
Let $k$ be the number of colors used in the proper coloring of $G$. For a given ordering of the colors in $G$, we can apply Lemma~\ref{lem:decreasing_tree_lemma} on $G$ and a path on $s$ vertices (choosing any endpoint of the path as its root), and thereby conclude that there is a decreasing path on $s$ vertices that is induced in $G$. As noted in Definition~\ref{def:dectree}, this path is clearly also a rainbow path in $G$, and therefore it is an induced rainbow path in $G$.
Note that a particular decreasing path $P$ can be produced by the application of Lemma~\ref{lem:decreasing_tree_lemma} for at most $2k!/s!$ different orderings of the $k$ colors of the proper coloring of $G$. Thus, if we apply Lemma~\ref{lem:decreasing_tree_lemma} for all the $k!$ different orderings possible for the colors in $G$, we get at least $s!/2$ different induced rainbow paths on $s$ vertices in $G$.
\end{proof}

Theorem~\hyperref[rainbowpathk2r]{\ref{thm:rainbowpaths}\ref{rainbowpathk2r}} shows that every properly colored $C_4$-free graph $G$ having $\chi(G)\geq\frac{s^2-s+2}{2}$ contains $s!/2$ induced rainbow paths on $s$ vertices. This improves the result of Gy\'arf\'as and Sark\"ozy, who showed that there exists a function $h(s)=\Omega((2s)^{2s})$ such that every graph having girth at least 5 and chromatic number at least $h(s)$ contains an induced rainbow path on $s$ vertices, not only in the bound, but also in the fact that it guarantees the existence of several rainbow paths. Notice that Theorem~\hyperref [rainbowpathk2r]{\ref {thm:rainbowpaths}\ref{rainbowpathk2r}} applies to a wider class of graphs than the graphs with girth at least 5 and that Theorem~\hyperref[rainbowpathgirth]{\ref{thm:rainbowpaths}\ref{rainbowpathgirth}} gives better and better bounds as the girth increases.

If \(c\) is a proper coloring of a graph \(G\), let \(\mu(G,c)\) be the number of vertices in the largest induced rainbow path in \(G\) with respect to \(c\). Let \(\mu(G) := \min_c\mu(G,c)\) over all proper colorings \(c\) of \(G\).
In the context of induced rainbow paths, Theorem~\hyperref[rainbowpathgirth]{\ref{thm:rainbowpaths}\ref{rainbowpathgirth}} shows that linear bounds on $\ell(s,\mathcal{C}_g)$ can be achieved when \(g = \Omega(\log s)\). More precisely, if \(G\) is a graph with girth \(g \geq 4 + 4\log s\) and \(\chi(G) \geq 2s\), then \(\mu(G) \geq s\). Babu et al.~\cite{Manu} proved that \(\ell(s,\mathcal{C}_s) =s\), i.e. $\mu(G) = \chi(G)$ if $\chi(G) \ge g(G)$. They asked whether there exists some constant $c<1$ for which it can be shown that $\mu(G)=\chi(G)$ for every graph $G$ having girth at least $c\cdot\chi(G)$. Note that our result above implies that
there exists a function $h$ such that for every $\epsilon<1$, $\mu(G)\geq\epsilon\cdot\chi(G)$ for every graph $G$ having girth at least $h(\epsilon)\log\chi(G)$.

\subsection{Induced rainbow trees in optimal colorings}\label{sec:rainbowtrees}

As noted in Section~\ref{sec:ourresults}, a construction of Erd\H{o}s and Hajnal~\cite{ErdosHajnal} shows that Conjecture~\ref{conj:aravind} cannot be extended from rainbow paths to rainbow trees.
Indeed, Erd\H{o}s et al.~\cite{Erdosetal} (Theorem~2.8) show that there exist graphs of arbitrarily large chromatic number and arbitrarily large girth that can be properly colored in such a way that the neighbourhood of each vertex contains at most 2 colors. Thus, even though every tree $T$ is an induced subgraph of a graph $G$ having girth at least 5 and sufficiently high chromatic number (Theorem~\ref{thm:gst}), the existence of $T$ as a rainbow subgraph cannot be guaranteed in a properly colored graph $G$ no matter how high the chromatic number or girth of $G$ are. Thus, the ``rainbow version'' of the Gy\'arf\'as-Sumner conjecture is not true. But note that even though the graphs constructed in~\cite{ErdosHajnal} and~\cite{Erdosetal} have proper colorings in which the neighbourhood of each vertex contains at most two colors (and hence one cannot find even a rainbow claw as a subgraph, given this coloring), these colorings are not optimal. Is there a way to color these graphs optimally while still ensuring that no rainbow claw appears as a subgraph? The answer to this question is negative. In fact,
Gy\'arf\'as, Szemeredi and Tuza~\cite{GyarfasSzemerediTuza} prove the following result, while stating that it is a part of graph theoretic folklore.

\begin{theorem}[Gy\'arf\'as-Szemeredi-Tuza~\cite{GyarfasSzemerediTuza}]\label{thm:rainbowsubgraph}
Let $G$ be an optimally colored graph. Then every tree on $\chi(G)$ vertices occurs as a rainbow subgraph of $G$.
\end{theorem}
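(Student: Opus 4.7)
The plan is to first pre-process the given optimal coloring with the Greedy Refinement Algorithm (Algorithm~\ref{GreedyRefinement}). Let $\beta$ be the given optimal coloring of $G$, fix any total order $<$ on its $k=\chi(G)$ colors, and let $\alpha=\refinement(\beta,<)$. By Proposition~\ref{prop:optimal}, $\alpha$ uses at most $k$ colors; since it remains a proper coloring and $\chi(G)=k$, it must use exactly $k$ colors, which by the algorithm are $\{1,2,\ldots,k\}$. In particular there exists a vertex $v_0$ with $\alpha(v_0)=k$. The crucial structural property is Proposition~\ref{prop1}: every vertex $v$ with $\alpha(v)=j$ has, among its neighbours, one of each color $1,2,\ldots,j-1$.

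The heart of the proof will be the following strengthened claim, proved by induction on $s$: \emph{for every $s\in\{1,\ldots,k\}$, every rooted tree $T'$ on $s$ vertices, every set $S\subseteq\{1,\ldots,k\}$ with $|S|=s$, and every vertex $v\in V(G)$ with $\alpha(v)=\max(S)$, there is an embedding of $T'$ into $G$ that is rainbow, uses exactly the colors in $S$, and maps the root of $T'$ to $v$.} The theorem follows by applying the claim with $s=k$, $S=\{1,2,\ldots,k\}$, and $v=v_0$, since a rainbow embedding under $\alpha$ is automatically rainbow under the original optimal coloring $\beta$ after an obvious relabelling (both assign distinct colors to distinct vertices of the image, and the theorem merely asks for a rainbow subgraph).

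For the inductive step, let $r'$ be the root of $T'$ with children $c_1,\ldots,c_d$ and let $T'_i$ be the subtree rooted at $c_i$, of size $s_i$; then $\sum_i s_i = s-1$. Partition $S\setminus\{\max(S)\}$ arbitrarily into disjoint sets $S_1,\ldots,S_d$ with $|S_i|=s_i$. Since each $\max(S_i)<\max(S)=\alpha(v)$, Proposition~\ref{prop1} provides a neighbour $u_i$ of $v$ with $\alpha(u_i)=\max(S_i)$, and the $u_i$'s are automatically distinct because the values $\max(S_i)$ are. Applying the inductive hypothesis to each triple $(T'_i, S_i, u_i)$ produces rainbow embeddings on pairwise disjoint color sets; gluing them together with $r'\mapsto v$ gives the desired rainbow embedding of $T'$. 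Vertex-disjointness across subtrees is free (disjoint color sets force disjoint images), and the edges $r'c_i$ of $T'$ are realised because $u_i\in N(v)$.

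The main obstacle is finding the right strengthening of the hypothesis: a naive induction that peels off a leaf of $T$ and then tries to append it to an already embedded $T\setminus\{\ell\}$ fails because only one color remains and it may not appear in the neighbourhood of the parent's image. The fix is to fix the root of the subtree being embedded at a vertex whose color is the largest one available; this matches exactly the ``downward completeness'' of neighbourhoods guaranteed by Proposition~\ref{prop1} and makes the partition-the-colors argument go through cleanly.
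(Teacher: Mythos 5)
Your inner induction is fine as far as it goes: with respect to the refined coloring $\alpha$ produced by Algorithm~\ref{GreedyRefinement}, the strengthened claim (embed any rooted tree on $|S|$ vertices rainbow with color set exactly $S$, root mapped to any vertex of color $\max(S)$) does follow from Proposition~\ref{prop1}, and Proposition~\ref{prop:optimal} guarantees that $\alpha$ is again an optimal coloring. The gap is the last step of your reduction: a subgraph that is rainbow under $\alpha$ is \emph{not} automatically rainbow under the given coloring $\beta$, and no relabelling of colors can repair this, because $\beta$ may genuinely repeat a color on the image. The refinement can split a single $\beta$-color class over several $\alpha$-colors: already in the second class, a vertex of color $\beta_2$ with no neighbour of color $\beta_1$ receives $\alpha$-color $1$, while another vertex of color $\beta_2$ receives $\alpha$-color $2$; if your embedded tree uses both, it is $\alpha$-rainbow but contains two vertices of the same $\beta$-color. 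Even the extra information in Proposition~\ref{prop1} (that the chosen neighbours have smaller $\beta$-color) only yields that $\beta$ strictly decreases along every root-to-leaf path of the embedded tree --- i.e.\ a decreasing tree in the paper's sense --- which does not exclude equal $\beta$-colors on vertices lying in different branches.

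Consequently, what your argument actually establishes is that \emph{some} optimal coloring of $G$ (namely $\alpha$) makes every tree on $\chi(G)$ vertices a rainbow subgraph; this is essentially the non-induced content of Lemma~\ref{lem:property_of_refined_coloring} and of the paper's subsequent ``there exists an optimal coloring'' theorem. Theorem~\ref{thm:rainbowsubgraph}, however, asserts the conclusion for an \emph{arbitrary given} optimal coloring, and it is this stronger form that is invoked in the proof of Theorem~\ref{thm:optimal}; the paper itself explicitly treats the ``exists an optimal coloring'' statement as a weaker result, and it does not reprove Theorem~\ref{thm:rainbowsubgraph} but quotes it from Gy\'arf\'as--Szemer\'edi--Tuza. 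A correct proof has to work with the given coloring $\beta$ directly, exploiting consequences of its optimality (for instance, that every color class contains a vertex having neighbours in all the other classes), rather than passing to a refined coloring of one's own choosing.
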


Does every tree on $s$ vertices occur as an \emph {induced} rainbow subgraph of any \emph{optimally} colored $K$-free graph with high enough chromatic number (where $K$ is some complete graph)? 
Clearly, an affirmative answer to this question would imply the Gy\'arf\'as-Sumner conjecture.
We now show that it can be inferred  from Theorem~\ref{thm:rainbowsubgraph} that an optimally colored $C_4$-free graph $G$ having $\chi(G)\geq (2s)^s$ contains every tree on $s$ vertices as an induced rainbow subgraph. In fact, the following more general statement is true.

\begin{theorem}\label{thm:optimal}
Let $G$ be a $K_{2,r}$-free graph ($r\geq 2$) such that $\chi(G)\geq (rs)^s$. Then for every optimal coloring of $G$, every tree on $s$ vertices is present as an induced rainbow subgraph of $G$.
\end{theorem}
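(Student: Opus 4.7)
The plan is to reduce the problem to Theorem~\ref{thm:rainbowsubgraph} by constructing a ``blow-up'' tree $T^*$ of $T$ whose size fits under $\chi(G)$, locating $T^*$ as a rainbow subgraph of the given optimally coloured $G$, and then pruning this rainbow copy down to an induced rainbow copy of $T$ by exploiting $K_{2,r}$-freeness.

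Root $T$ at an arbitrary vertex and fix a BFS order $v_1, v_2, \ldots, v_s$, with $p(i)$ denoting the parent index of $v_i$ for $i \geq 2$. I would define $T^*$ recursively so that its root is a single copy of $v_1$, and whenever a copy $\tilde u$ of some $u \in V(T)$ has been placed, for each child $c$ of $u$ in $T$ we attach $rs$ fresh children to $\tilde u$ in $T^*$, each labelled a ``copy of $c$''. Every vertex $v \in V(T)$ then has exactly $(rs)^{d(v)}$ copies in $T^*$, where $d(v)$ is its depth, so
\[
|V(T^*)| \;=\; \sum_{v \in V(T)} (rs)^{d(v)} \;\leq\; s \cdot (rs)^{s-1} \;\leq\; (rs)^s \;\leq\; \chi(G).
\]
Extending $T^*$ arbitrarily by dummy leaves to a tree on exactly $\chi(G)$ vertices and invoking Theorem~\ref{thm:rainbowsubgraph} then yields an embedding $\phi\colon V(T^*) \to V(G)$ realising $T^*$ as a rainbow subgraph of $G$; in particular distinct vertices of $T^*$ receive distinct colours in $G$.

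Next I would greedily carve out an induced rainbow copy of $T$ inside $\phi(V(T^*))$. Set $\tilde v_1$ to be the root of $T^*$; having chosen $\tilde v_1, \ldots, \tilde v_{i-1}$ so that their images induce a copy of $T[\{v_1,\ldots,v_{i-1}\}]$ in $G$, I look among the $rs$ copies of $v_i$ attached to $\tilde v_{p(i)}$ in $T^*$ for a candidate $\tilde v_i$ whose image $\phi(\tilde v_i)$ is not adjacent in $G$ to any $\phi(\tilde v_l)$ with $l < i$ and $l \neq p(i)$. The decisive point is $K_{2,r}$-freeness: all $rs$ candidate images are already neighbours of $\phi(\tilde v_{p(i)})$ in $G$, so if any single $\phi(\tilde v_l)$ were adjacent to $r$ of them, the pair $\{\phi(\tilde v_{p(i)}), \phi(\tilde v_l)\}$ together with those $r$ images would form a $K_{2,r}$ subgraph of $G$, a contradiction. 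Hence each of the at most $i-2 \leq s-2$ previously chosen non-parent vertices eliminates at most $r-1$ candidates, leaving at least $rs - (r-1)(s-2) = 2r + s - 2 > 0$ valid choices for $\tilde v_i$, and the inductive step goes through.

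The one thing to verify carefully is that the blow-up fits under $\chi(G)$, but once the branching factor $rs$ is forced by the $K_{2,r}$ counting, the crude estimate $s(rs)^{s-1} \leq (rs)^s$ already suffices; a more frugal blow-up with only $(r-1)(s-2)+1$ copies per parent would tighten the threshold, at the cost of uglier arithmetic. The substance of the argument is really the interplay between Theorem~\ref{thm:rainbowsubgraph}, which hands us a rainbow blow-up for free in any optimal colouring, and $K_{2,r}$-freeness, which bounds by $r-1$ the spurious edges any previously chosen vertex can contribute to the current layer of candidates.
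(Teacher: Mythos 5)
Your proof is correct and follows essentially the same route as the paper: both invoke Theorem~\ref{thm:rainbowsubgraph} on an auxiliary rainbow tree of size at most $(rs)^s$ (yours a blow-up of $T$ with branching factor $rs$, the paper's a complete $(rs)$-ary tree with $s$ levels) and then extract an induced copy of $T$ via the identical $K_{2,r}$-freeness count, namely that each previously embedded non-parent vertex can be adjacent to at most $r-1$ of the candidate children, so $(r-1)(s-2)<rs$ leaves a valid choice. The only difference is cosmetic: you embed $T$ greedily forward along a BFS order, while the paper runs a leaf-removal induction inside its universal scaffold.
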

\begin{proof}
Consider an optimal coloring of $G$.
Let $T$ be a complete $(rs)$-ary tree having $s$ levels (a rooted tree in which all internal vertices have exactly $rs$ children and every leaf is at a distance of $s-1$ from the root). It follows from Theorem~\ref{thm:rainbowsubgraph} that $T$ occurs as a rainbow subgraph of $G$. It can be seen in the following way that any tree $H$ having at most $s$ vertices occurs as a subgraph of $T$ that is also an induced subgraph of $G$. We prove this by induction on $|V(H)|$. If $|V(H)|\leq 1$, then the statement is trivially true. Suppose that $|V(H)|\geq 2$. Let $u$ be a leaf of $H$ and $v$ its unique neighbour in $H$. Further, let $H'=H-u$. Let $T'$ be the subtree induced in $T$ by the internal vertices of $T$. Note that $T'$ contains as a subgraph a complete $(r(s-1))$-ary tree having $s-1$ levels. Therefore, since $|V(H')|\leq s-1$, by the inductive hypothesis, there is a subgraph $H''$ of $T'$ isomorphic to $H'$ that is also an induced subgraph of $G$. Let $v'$ be the vertex corresponding to $v$ in $V(H'')\subseteq V(T)$. Note that since $|N_{H''}(v')|\leq s-2$, we have $|N_T(v')\setminus N_{H''}(v')|\geq rs-(s-2)$. Therefore, we can choose distinct vertices $u_1,u_2,\ldots,u_{rs-s}\in N_T(v')\setminus N_{H''}(v')$. Note that since $H''$ is an induced subgraph of $G$, $\{u_1,u_2,\ldots,u_{rs-s}\}\cap V(H'')=\emptyset$. For every vertex $x\in V(H'')\setminus\{v'\}$, we have $|N_G(x)\cap\{u_1,u_2,\ldots,u_{rs-s}\}|\leq r-1$, as otherwise, $r$ neighbours of $x$ in $\{u_1,u_2,\ldots,u_{rs-s}\}$ together with $x$ and $v'$ will form a $K_{2,r}$ in $G$. Thus $\sum_{x\in V(H'')\setminus\{v'\}} |N_G(x)\cap\{u_1,u_2,\ldots,u_{rs-s}\}|\leq (r-1)(s-2)<rs-s$. Therefore, there exists $i\in\{1,2,\ldots,rs-s\}$ such that $N_G(u_i)\cap (V(H'')\setminus\{v'\})=\emptyset$. Now $G[V(H'')\cup\{u_i\}]$ is an induced subgraph of $G$ that is isomorphic to $H$ and is also a subgraph of $T$. Thus every tree on $s$ vertices occurs as a subgraph of $T$ that is also an induced subgraph of $G$. Since $T$ is rainbow, it follows that every tree on $s$ vertices occurs as an induced rainbow subgraph in $G$.
\end{proof}
\medskip

Next, we show that if we are ready to weaken the statement of Theorem~\ref{thm:optimal} so that we no longer want every optimal coloring of $G$ to have the property stated in the theorem, but only want to guarantee the existence of \emph{some} optimal coloring of $G$ such that every tree on $s$ vertices occurs as an induced rainbow subgraph of $G$, then we can get much better bounds for $\chi(G)$. Notice that instead of restricting the graph to be $K_{2,r}$-free, we could also ask whether any $K$-free graph (where $K$ is some complete graph) with a high enough chromatic number has an optimal coloring such that the graph contains every tree on $s$ vertices as an induced rainbow subgraph. Again, an affirmative answer to this question would imply the Gy\'arf\'as-Sumner conjecture.

\begin{lemma}\label{lem:property_of_refined_coloring}
Let $G$ be an undirected graph with a proper coloring $\beta$. Let $<$ be a total order on the colors of $\beta$ and $\alpha=\refinement(\beta,<)$. Then in each of the following cases, every tree on $s$ vertices is present as an induced rainbow subgraph of $G$ with respect to the coloring $\alpha$:
\begin{enumerate}
\vspace{-0.05in}
\renewcommand{\labelenumi}{(\roman{enumi})}
\renewcommand{\theenumi}{(\roman{enumi})}
\itemsep 0.01in
\item \(G\) is \(K_{2,r}\)-free (\(r \geq 2\)) and \(\chi(G)\geq (r-1)(s-1)(s-2)/2 +s\).
\item $G$ has girth $g\geq 5$ and \(\chi(G) \geq s^{1+4/(g-4)}\).
\end{enumerate}
\end{lemma}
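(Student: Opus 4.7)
The plan is to reduce the lemma to Corollary~\ref{cor:general} applied to the natural orientation $D$ of $G$ with respect to $\alpha=\refinement(\beta,<)$. As already noted in the text immediately following Proposition~\ref{prop:optimal}, Proposition~\ref{prop1} ensures that $\alpha$ is an out-tree coloring of $D$: any vertex $v$ with $\alpha(v)=j$ satisfies $\{1,2,\ldots,j-1\}\subseteq \alpha(N_D^+(v))$, because the neighbours $u_1,\ldots,u_{j-1}$ supplied by Proposition~\ref{prop1} all have strictly smaller $\alpha$-value than $v$ and hence become out-neighbours of $v$ in $D$.

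Next I would check that the structural and chromatic hypotheses of Corollary~\ref{cor:general} hold for $D$. Since $\hat{D}=G$, any $\mathcal{B}_r$ subgraph of $D$ would force a $K_{2,r}$ subgraph in $G$, so in case~(i) the $K_{2,r}$-freeness of $G$ makes $D$ a $\mathcal{B}_r$-free oriented graph; in case~(ii) we have $g(D)=g(G)\geq g$ directly. Moreover $\alpha$ is a proper coloring of $G$, so the number of colors it uses is at least $\chi(G)$, which by hypothesis is at least $(r-1)(s-1)s/2+s$ in case~(i) and at least $s^{1+4/(g-4)}$ in case~(ii). Corollary~\ref{cor:general} therefore yields that every out-tree on $s$ vertices appears as an induced rainbow subgraph of $D$ with respect to $\alpha$.

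To finish, given an arbitrary tree $T$ on $s$ vertices, I would pick any vertex as root and orient every edge of $T$ away from the root to obtain an out-tree $T'$ on $s$ vertices with $\hat{T'}=T$. The previous paragraph gives a vertex set $W\subseteq V(G)$ such that $D[W]\cong T'$ as oriented graphs and $\alpha$ is injective on $W$; passing to underlying undirected graphs, $G[W]=\hat{D}[W]$ is an induced copy of $T$ in $G$ that is rainbow with respect to $\alpha$. I foresee no real obstacle: the non-trivial combinatorial work has already been carried out in Lemma~\ref{lem:generallemma}, and the present statement is essentially Corollary~\ref{cor:general} translated back to the undirected, refined-coloring setting, together with the trivial observation that any tree is the underlying graph of an out-tree.
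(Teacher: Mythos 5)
Your proposal is correct and follows essentially the same route as the paper's own proof: orient an arbitrary tree into an out-tree, observe via Proposition~\ref{prop1} that $\alpha=\refinement(\beta,<)$ is an out-tree coloring of the natural orientation $D$, verify that the $\mathcal{B}_r$-freeness (resp. girth) and color-count hypotheses transfer from $G$ to $D$, and apply Corollary~\ref{cor:general}. No discrepancies worth noting.
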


\begin{proof}
Let $T$ be any tree on $s$ vertices. Assign directions to the edges of $T$ so that we get an out-tree $T'$.
Consider the natural orientation $D$ of $G$ with respect to $\alpha$.
By Proposition~\ref{prop1}, it follows that $\alpha$ is an out-tree coloring of $D$.
Let the colors used by $\alpha$ be $\{1,2,\ldots,k\}$. Clearly, $k\geq\chi(G)=\chi(D)$. Notice that if $G,s$ satisfy one of the conditions in the statement of the lemma, then $D,s$ satisfy the corresponding condition in Corollary~\ref{cor:general}.
Thus we can apply Corollary~\ref{cor:general} to conclude that there is an induced rainbow subgraph of $D$ isomorphic to $T'$ with respect to the coloring $\alpha$, which implies that $T$ is an induced rainbow subgraph of $G$ with respect to $\alpha$. 
\end{proof}

\begin{theorem}
Let $G$ be a graph. In each of the following cases, there exists an optimal coloring of $G$ such that every tree $T$ on $s$ vertices occurs as an induced rainbow subgraph of $G$:
\begin{enumerate}
\vspace{-0.05in}
\renewcommand{\labelenumi}{(\roman{enumi})}
\renewcommand{\theenumi}{(\roman{enumi})}
\itemsep 0.01in
\item \(G\) is \(K_{2,r}\)-free (\(r \geq 2\)) and \(\chi(G)\geq (r-1)(s-1)(s-2)/2 + s\).
\item $G$ has girth \(g\geq 5\) and \(\chi(G) \geq s^{1+4/(g-4)}\).
\end{enumerate}
\end{theorem}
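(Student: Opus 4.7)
The plan is to apply Lemma~\ref{lem:property_of_refined_coloring} as a black box after verifying that the Greedy Refinement Algorithm preserves optimality of the input coloring. First I would fix an arbitrary optimal coloring $\beta$ of $G$, which by definition uses exactly $\chi(G)$ colors, and pick an arbitrary total order $<$ on those colors. Then I would run Algorithm~\ref{GreedyRefinement} to produce the refined coloring $\alpha = \refinement(\beta,<)$.

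The next step is to argue that $\alpha$ is itself an optimal coloring of $G$. By Proposition~\ref{prop:optimal}, the number of colors used by $\alpha$ is at most the number used by $\beta$, namely $\chi(G)$. Since $\alpha$ is a proper coloring of $G$, it cannot use fewer than $\chi(G)$ colors. Consequently, $\alpha$ uses exactly $\chi(G)$ colors and hence is optimal.

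Finally, I would invoke Lemma~\ref{lem:property_of_refined_coloring}: the hypotheses on $G$ in cases (i) and (ii) of the present theorem are identical to those in the corresponding cases of that lemma, so its conclusion yields that every tree on $s$ vertices occurs as an induced rainbow subgraph of $G$ with respect to $\alpha$. Combined with the optimality of $\alpha$ established in the previous step, this is precisely the optimal coloring the theorem asks for.

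There is really no hard step here; both ingredients are essentially packaged for this application. The only subtlety is that Proposition~\ref{prop:optimal} is phrased as ``at most as many colors as $\beta$'', so I would be careful to spell out explicitly that this upper bound, together with the properness of $\alpha$ and the optimality of $\beta$, forces $\alpha$ to use exactly $\chi(G)$ colors rather than strictly fewer.
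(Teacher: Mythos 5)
Your proposal is correct and follows essentially the same route as the paper: fix an arbitrary optimal coloring $\beta$, refine it via the Greedy Refinement Algorithm to obtain $\alpha$, use Proposition~\ref{prop:optimal} (together with properness) to conclude $\alpha$ is optimal, and then apply Lemma~\ref{lem:property_of_refined_coloring}. The only difference is that you spell out explicitly why ``at most as many colors'' forces exactly $\chi(G)$ colors, which the paper leaves implicit.
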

\begin{proof}
Let $\beta$ be an arbitrary optimal coloring of $G$, and $<$ be any total order on the set of colors. Let $\alpha=\refinement(\beta, <)$. By Proposition~\ref{prop:optimal}, we have that $\alpha$ is an optimal coloring of $G$. 
Then by Lemma~\ref{lem:property_of_refined_coloring}, every tree on $s$ vertices is present as an induced rainbow subgraph with respect to the optimal coloring $\alpha$ in $G$.
\end{proof}

\section{Induced oriented trees}
\subsection{Out-trees and in-trees in directed acyclic graphs}

\outtreethm*
\begin{proof}
We shall first prove the theorem for the case when $T$ is an out-tree.
Let $\beta: V(G) \rightarrow \mathbb{N}^{>0}$ be a proper vertex  coloring of $\hat G$,  such that $G$ is the natural orientation of $\hat G$ with respect to  $\beta$. 
Such a coloring $\beta$ can be obtained as follows.
Let $G_0=G$. For each $i\geq 1$, define $C_i=\{u\in V(G_{i-1})\colon d^+_{G_{i-1}}(u)=0\}$ and $G_i=G_{i-1}-C_i$. Let $k$ be the number of vertices in the longest directed path in $G$. Clearly, $V(G_i)=\emptyset$ for every $i\geq k$. For each $u\in V(G)$, we define $\beta(u)$ to be the unique integer $i\in\{1,2,\ldots,k\}$ such that $u\in C_i$. 
It well-known (and easy to verify)  that $\beta$ is a proper vertex coloring of $\hat G$ using $k$ colors, and  whenever there is
an edge between $u$ and $v$, with $\beta(u) > \beta(v)$, it is oriented in $G$ from $u$ to $v$, i.e. $G$ is indeed the natural orientation of $\hat G$ with respect to $\beta$.

Let $\alpha=\refinement(\beta,<)$ (here, $<$ is the usual ordering on $\mathbb{N}^{>0}$). Let $D$ be the oriented graph having $V(D)=V(G)$ and $E(D)=\{(u,v)\in E(G)\colon\alpha(u)>\alpha(v)\}$. By Proposition~\ref{prop1}, it follows that $\alpha$ is an out-tree coloring of $D$. Since $\alpha$ is a proper vertex coloring of $G$, we know that $\alpha$ uses at least $\chi(G)$ colors. Now applying Lemma~\ref{lem:generallemma} on $G$, $D$, and $T$, we get that if one of the conditions in the statement of the lemma is satisfied, then the out-tree $T$ is present as an induced subgraph of $G$.

For the case when $T$ is an in-tree, we simply consider the oriented graph $G'$ that is obtained from $G$ by reversing the direction of each edge. Clearly, $\chi(G')=\chi(G)$, and $G'$ is $\mathcal{B}^+_r$-free if and only if $G$ is $\mathcal{B}^-_r$-free. We then have by the argument above that $G'$ contains all out-trees on $s$ vertices as induced subgraphs. Since every in-tree on $s$ vertices can be obtained by reversing the directions of the edges of some out-tree on $s$ vertices, we now have that every in-tree on $s$ vertices is an induced subgraph of $G$.
\end{proof}

\subsection{Oriented trees in bikernel-perfect oriented graphs of girth at least 5}
\newcommand{\goodcoloring}{parity-coloring}
\begin{definition}
A proper coloring $\alpha$ of an oriented graph using colors from $\mathbb{N}^{>0}$ is said to be a \emph{\goodcoloring} if for each vertex $u\in V(G)$, $\alpha(N^+(u))\supseteq\{i\in\{1,2,\ldots,\alpha(u)-1\}\colon i\equiv 0\mod 2\}$ and $\alpha(N^-(u))\supseteq\{i\in \{1,2,\ldots,\alpha(u)-1\}\colon i\equiv 1\mod 2\}$.
\end{definition}
We now prove a lemma using techniques similar to the ones used in Lemma~\ref{lem:generallemma}. Recall that the girth of an oriented graph is simply the girth of its underlying undirected graph (see Section~\ref{sec:introduction}).

\begin{lemma}\label{lem:orientedtree}
Let $G$ be an oriented graph having girth $g\geq 5$. Suppose that $G$ has a \goodcoloring\ using colors $\{1,2,\ldots,k\}$ in which the color $k$ has been given to some vertex. If $k\geq 2s^{1+4/(g-4)}$, then $G$ contains every oriented tree on $s$ vertices as an induced subgraph.
\end{lemma}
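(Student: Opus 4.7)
The plan is to adapt the template of Lemma~\ref{lem:generallemma}\ref{lemgirth} to the bidirectional setting, where the orientation of each edge of $T$ dictates whether we must land on an out-neighbour or an in-neighbour of a previously placed vertex. The parity-coloring hypothesis tells us that every vertex $v$ has, among its out-neighbours, one of each even color less than $\alpha(v)$, and among its in-neighbours, one of each odd color less than $\alpha(v)$. So, just as in Lemma~\ref{lem:generallemma}, we will make greedy choices from the largest available color downward, except that only half of the colors below a given threshold are available at each step.

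Fix an oriented tree $T$ on $s$ vertices, pick a vertex as root $w_1$, and order $V(T)=\{w_1,\ldots,w_s\}$ so that every $w_j$ ($j\geq 2$) has a unique parent $w_{p(j)}$ earlier in the order. For each $j\geq 2$, write $\epsilon_j=+$ if the edge of $T$ between $w_{p(j)}$ and $w_j$ is directed from $w_{p(j)}$ to $w_j$, and $\epsilon_j=-$ otherwise; the parity-coloring condition forces $\alpha(v_j)$ to be even precisely when $\epsilon_j=+$. Define a \emph{good tree} $T'_i$ isomorphic to $T_i=T[\{w_1,\ldots,w_i\}]$ exactly as in the proof of Lemma~\ref{lem:generallemma}, with the natural modifications: $\alpha(v_1)=k$; and for $2\leq j\leq i$, the color $\alpha(v_j)$ is less than $\alpha(v_{j-1})$, has the parity dictated by $\epsilon_j$, is realized by an appropriately directed neighbour $v_j$ of $v_{p(j)}$, and is the largest such color that can be chosen without destroying the induced condition (so that every color $t$ of parity $\epsilon_j$ with $\alpha(v_j)<t<\alpha(v_{j-1})$ is ``blocked'', i.e. the vertex $v_{p(j)}^{t,\epsilon_j}$ has a neighbour in $\{v_1,\ldots,v_{j-1}\}\setminus\{v_{p(j)}\}$). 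Pick $i$ to be largest for which a good tree exists and assume for contradiction that $i<s$; then form the sets $X_j$ and $X'$ of blocked vertices (restricted to the matching parity at each step) and set $X = X'\cup\bigcup_{j=2}^i X_j$.

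At this point the auxiliary-graph argument from Lemma~\ref{lem:generallemma}\ref{lemgirth} carries over essentially verbatim: for each vertex of $X$ select a tree-side neighbour guaranteed by the blocked condition, build $H$ on $V(T'_i)$ by adding these edges to $E(\hat{T'_i})$, and invoke girth $g\geq 5$ to check that the new edges are distinct from one another and from edges of $\hat{G}$, and that $g(H)\geq g/2$. Lemma~\ref{lem:extremal} then yields $|X|<(i^{1+4/(g-4)}-i+2)/2$. The single genuinely new step is the matching lower bound on $|X|$: within each interval $(\alpha(v_j),\alpha(v_{j-1}))$ the colors of parity $\epsilon_j$ contribute to $X_j$ (and only those), and within $(0,\alpha(v_i))$ the colors of parity $\epsilon_{i+1}$ contribute to $X'$. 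A short per-interval count gives $|X_j|\geq (\alpha(v_{j-1})-\alpha(v_j))/2-1$ and $|X'|\geq \alpha(v_i)/2-1$, which telescope to $|X|\geq k/2-i$. Combining the two bounds yields $k<i^{1+4/(g-4)}+i+2$, and then exactly the same chain of estimates used in the original proof (using $i\geq g$ to push $1+2/i\leq e^{2/i}\leq (\sqrt{e})^{4/(g-4)}\leq i^{4/(g-4)}$) promotes this to $k<2i^{1+4/(g-4)}$, which contradicts $k\geq 2s^{1+4/(g-4)}$ together with $i<s$.

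The subsidiary case $i<g$ is handled in parallel with Lemma~\ref{lem:generallemma}\ref{lemgirth}: the girth hypothesis forces $X_j=\emptyset$ for $j\leq i$, so successive good-tree colors differ by at most $2$, and a $4$-cycle obstruction forces $|X'|\leq 1$, pinning $\alpha(v_i)$ to a small constant and hence $k\leq 2i+2\leq 2s$, which contradicts $k\geq 2s^{1+4/(g-4)}>2s$ once $s\geq 2$ (the case $s=1$ is trivial). I expect the parity-sensitive counting that produces the bound $|X|\geq k/2-i$ to be the main obstacle; once that is in place, the rest of the argument is a transcription of Lemma~\ref{lem:generallemma}\ref{lemgirth} with the minor bookkeeping needed to keep track of out- versus in-neighbours at every step.
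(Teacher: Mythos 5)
Your proposal is correct and matches the paper's proof in all essential respects: it runs the good-tree extremal argument of Lemma~\ref{lem:generallemma}\ref{lemgirth} with parity-restricted greedy choices, derives the matching lower bound $|X|\geq k/2-i$ by a per-interval parity count, and closes via the same girth-of-$H$ estimate and the same algebraic chain; the only cosmetic difference is that you count blocked vertices $X_j,X'$ directly whereas the paper counts the color intervals $U_j,U'$ and then bounds $|E'|\geq\sum\lfloor|U_j|/2\rfloor+\lfloor|U'|/2\rfloor\geq k/2-i$, which is the same arithmetic. (The paper's choice of a depth-first ordering of $\hat T$ is also not essential; any ordering with $p(j)<j$, as you use, works.)
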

\begin{proof}
For every vertex $u\in V(G)$ and every even integer $i\in\{1,2,\ldots,\alpha(u)-1\}$, let $u^i$ denote an arbitrarily chosen vertex in $N^+(u)$ having color $i$. Similarly, for every vertex $u\in V(G)$ and every odd integer $i\in \{1,2,\ldots,\alpha(u)-1\}$, let $u^i$ denote an arbitrarily chosen vertex in $N^-(u)$ having color $i$.
Let $T$ be any oriented tree on $s$ vertices and let $w_1,w_2,\ldots,w_s$ be an ordering of $V(T)$ obtained by a depth-first traversal of $\hat{T}$ (the underlying undirected tree of $T$) starting from a vertex $w_1$ that we fix as the root. For each $i\in\{2,3,\ldots,s\}$, let $p(i)$ be the integer such that $w_{p(i)}$ is the parent of $w_i$ in the rooted tree $\hat{T}$. As $w_1$ corresponds to the root of $\hat{T}$, the parent of every vertex appears before it in the ordering $w_1,w_2,\ldots,w_s$, and so we have $p(i)<i$.

For any $i\in\{1,2,\ldots,s\}$, we denote by $T_i$ the oriented tree $T[\{w_1,w_2,\ldots,w_i\}]$. Let $T'_i$ be an induced subgraph of $G$ that is isomorphic to $T_i$ and having vertices $v_1,v_2,\ldots,v_i$ corresponding to $w_1,w_2,\ldots,w_i$. Clearly, for $j\in\{2,3,\ldots,i\}$, $v_{p(j)}$ is the parent of $v_j$ in the rooted tree $\hat{T'_i}$ having $v_1$ as its root. We say that $T'_i$ is a ``good'' tree if $\alpha(v_1)=k$, and for $2\leq j\leq i$:
\begin{enumerate}
\item $\alpha(v_{j-1})>\alpha(v_j)$, and
\item for $\alpha(v_{j-1})>t>\alpha(v_j)$, $G[\{v_1,v_2,\ldots,v_{j-1},v_{p(j)}^t\}]$ is not isomorphic to $T_j$.
\end{enumerate}
Let $i$ be the maximum integer such that there is a good tree $T'_i$ isomorphic to $T_i$ in $G$ (since there is a vertex having color $k$, we have that $i$ exists). If $i\geq s$, then we are done. So let us assume for the sake of contradiction that $i\leq s-1$. For $2\leq j\leq i$, let $U_j=\{t\colon\alpha(v_j)<t<\alpha(v_{j-1})\}$.
As $T'_i$ is a good tree in $G$, if $v_j\in N^+(v_{p(j)})$ (resp. $v_j\in N^-(v_{p(j)})$) then for every even (resp. odd) $t\in U_j$,
we have $N(v_{p(j)}^t)\cap(\{v_1,v_2,\ldots,v_{j-1}\}\setminus\{v_{p(j)}\})\neq\emptyset$.
Let $U'=\{t\colon 1\leq t<\alpha(v_i)\}$.
If $w_{i+1}\in N^+_T(w_{p(i+1)})$ (resp. $w_{i+1}\in N^-_T(w_{p(i+1)})$) and for some even (resp. odd) $t\in U'$, we have $N(v_{p(i+1)}^t)\cap(\{v_1,v_2,\ldots,v_i\}\setminus\{v_{p(i+1)}\})=\emptyset$, then taking $t$ to be the largest such integer in $U'$, we have that $G[\{v_1,v_2,\ldots,v_i\}\cup\{v_{p(i+1)}^t\}]$ is a good tree isomorphic to $T_{i+1}$, which contradicts our choice of $i$. We can therefore conclude that if $w_{i+1}\in N^+_T(w_{p(i+1)})$ (resp. $w_{i+1}\in N^-_T(w_{p(i+1)})$), then for every
even (resp. odd) $t\in U'$, we have $N(v_{p(i+1)}^t)\cap(\{v_1,v_2,\ldots,v_i\}\setminus\{v_{p(i+1)}\})\neq\emptyset$.
Let $U=U'\cup\bigcup_{j=2}^i U_i$. Clearly, $|U|+i=k$.

It may be helpful at this point to recall that a ``path'' or ``cycle'' in an oriented graph is simply a path or cycle, respectively, in its underlying undirected graph (see Section~\ref{sec:introduction}).

First, let us show that $i\geq g$. Assume for the sake of contradiction that $i<g$.  We claim that $|U_j|\leq 1$ for $2\leq j\leq i$. Suppose that there exists $j\in\{2,3,\ldots,i\}$ such that $|U_j|\geq 2$. Then if $v_j\in N^+(v_{p(j)})$ (resp. $v_j\in N^-(v_{p(j)})$), there is an even (resp. odd) $t\in U_j$ such that there exists $z\in N(v_{p(j)}^t)\cap (\{v_1,v_2,\ldots,v_{j-1}\}\setminus\{v_{p(j)}\})$. But then the path in $T'_i$ between $z$ and $v_{p(j)}$ together with $v_{p(j)}^t$ forms a cycle of length at most $j\leq i<g$ in $G$, which is a contradiction. So we can assume that $|U_j|\leq 1$ for $2\leq j\leq i$. Now suppose that $w_{i+1}\in N^+_T(w_{p(i+1)})$ (resp. $w_{i+1}\in N^-_T(w_{p(i+1)})$ and there is an even (resp. odd) $t\in U'$. Then there exists $z\in N(v_{p(i+1)}^t)\cap (\{v_1,v_2,\ldots,v_i\}\setminus\{v_{p(i+1)}\})$. Note that the path in $T'_i$ between $z$ and $v_{p(i+1)}$ together with $z$ forms a cycle of length at most $i+1$. Since we assumed that $i<g$, we also have that this cycle has length at least $i+1$, which implies that this cycle has length exactly $i+1$. This means that the path between $z$ and $v_{p(i+1)}$ contains all the vertices of $T'_i$, or to be more precise, $z=v_1,v_2,\ldots,v_i=v_{p(i+1)}$ is a path. Thus, if $w_{i+1}\in N^+_T(w_{p(i+1)})$ (resp. $w_{i+1}\in N^-_T(w_{p(i+1)})$ and there exist distinct even (resp. odd) $t,t'\in U'$, we have $v_1\in N(v_{p(i+1)}^t)\cap N(v_{p(i+1)}^{t'})$, which implies that there is a cycle $v_{p(i+1)},v_{p(i+1)}^t,v_1,v_{p(i+1)}^{t'},v_{p(i+1)}$ of length 4 in $G$, which is a contradiction. It follows that $|U'|\leq 3$. Then $|U|\leq 2+i$, which gives $i\geq k-2-i$, or in other words, $i\geq (k-2)/2$. From the statement of the lemma, we have $k\geq 2s+1$, as $k$ and $s$ are integers, implying that \(i\geq (2s-1)/2>s-1\), contradicting the assumption that $i\leq s-1$. We can therefore assume that $i\geq g$.

We shall construct an auxiliary undirected graph \(H\) with \(V(H) = V(T'_i)\) and \(E(H) = E(\hat{T'_i}) \cup E'\), where \(E'\) is a set of edges defined as follows. For $j\in\{2,3,\ldots,i\}$ such that $v_j\in N^+(v_{p(j)})$ (resp. $v_j\in N^-(v_{p(j)})$) and even (resp. odd) \(t \in U_j\), the edge \(v_{p(j)}x\) is added to \(E'\), where $x$ is an arbitrarily chosen vertex in $N(v_{p(j)}^t)\cap (\{v_1,v_2,\ldots,v_{j-1}\}\setminus\{v_{p(j)}\})$. Note that since $v_{p(j)}$ is a neighbour of $v_{p(j)}^t$ in $G$ and $g(G)=g\geq 5$, $v_{p(j)}x$ is not an edge of $\hat{G}$. Also, if any other neighbour of $v_{p(j)}$ was adjacent to $x$, there would be a cycle of length four in $G$, again contradicting the fact that $G$ has girth at least 5. Thus, no edge of $E(\hat{G})$ is present in $E'$ and no edge is added twice to $E'$. Similarly, if $w_{i+1}\in N^+(w_{p(i+1)})$ (resp. $w_{i+1}\in N^-(w_{p(i+1)})$), then for each even (resp. odd) $t\in U'$, the edge $v_{p(i+1)}x$ is added to $E'$, where $x$ is an arbitrarily chosen vertex in $N(v_{p(i+1)}^t)\cap (\{v_1,v_2,\ldots,v_i\}\setminus\{v_{p(i+1)}\})$. Thus, $H$ is a simple graph and \(|E'| \geq \left\lfloor\frac{|U'|}{2}\right\rfloor+\sum_{j=2}^i \left\lfloor\frac{|U_j|}{2}\right\rfloor\geq\frac{|U|-i}{2}=\frac{k}{2}-i\). This gives $k\leq 2|E'|+2i$.

Now, we claim that \(g(H) \geq g/2\). Given any cycle \(C\) of \(H\), we can replace each edge of \(C\) by at most 2 edges of \(G\) to obtain a closed walk in $G$ containing at most $2g(H)$ edges (the closed walk will actually be a cycle). Thus, if \(g(H) < g/2\), then there is a cycle in \(G\) of length less than \(g\), which is a contradiction.

\medskip
\noindent From Lemma~\hyperref[ext1]{\ref{lem:extremal}\ref{ext1}}, if \(g(H)\) is odd, we get: 
\[|E(H)| \  < \  \frac{i^{1+2/(g(H)-1)} + i}{2} \  \leq \  \frac{i^{1+4/(g-2)} + i}{2} \ < \ \frac{i^{1+4/(g-4)} + i}{2}\]

\medskip
\noindent From Lemma~\hyperref[ext2]{\ref{lem:extremal}\ref{ext2}}, if \(g(H)\) is even, we get: 
\[|E(H)| \  < \  \bigg(\frac{i}{2}\bigg)^{1+2/(g(H)-2)} + \frac{i}{2} \  \leq \  \bigg(\frac{i}{2}\bigg)^{1+4/(g-4)} + \frac{i}{2}  \ < \ \frac{i^{1+4/(g-4)} + i}{2}\]

\medskip
\noindent Combining both the cases, we get 
\[|E(H)| \ < \ \frac{i^{1+4/(g-4)} + i}{2}\]

\noindent Since \(|E(H)| = |E'|+(i-1)\)
\[|E'| < \frac{i^{1+4/(g-4)} - i + 2}{2}\]
\noindent Since \(k\leq 2|E'|+2i\), we get:
\begin{eqnarray*}
k \: &<& \: i + i^{1+4/(g-4)} + 2= i(1 + 2/i+i^{4/(g-4)})\\
&\leq& i(e^{2/i}+i^{4/(g-4)})\qquad\mbox{(since }1+2/i\leq e^{2/i}\mbox{)}\\
&\leq& i(e^{2/(g-4)}+i^{4/(g-4)})\qquad\mbox{(since $i\geq g\geq g-4$)}\\
&=&i(\sqrt{e}^{4/(g-4)}+i^{4/(g-4)})
\end{eqnarray*}

By our assumption that $i\geq g$, we get that \(i \geq 2 > \sqrt{e}\). Using this in the above inequality, we get:
\[k < 2i^{1+4/(g-4)}\]

Now by using the bound on $k$ given in the statement of the lemma, we get:
$$ 2s^{1+4/(g-4)} \leq k < 2i^{1+4/(g-4)}$$
This implies that $i\geq s$, which is a contradiction.
\end{proof}

\bikernelthm*
\begin{proof}
Let $G_0=G$. For each odd (resp. even) $i\geq 1$, let $C_i$ be an antikernel (resp. kernel) of $G_{i-1}$ and let $G_i=G_{i-1}-C_i$. Since $G$ is bikernel-perfect, there exists a smallest integer $k$ for which $V(G_k)=\emptyset$. Note that $\{C_1,C_2,\ldots,C_k\}$ is a partition of $V(G)$. For each vertex $u\in V(G)$, define $\beta(u)$ to be the unique integer $i$ in $\{1,2,\ldots,k\}$ such that $u\in C_i$. Since for $1\leq i<j\leq k$, every vertex in $C_j\subseteq V(G_i)$ has an in-neighbour (resp. out-neighbour) in $C_i$ when $i$ is odd (resp. even), we can conclude that $\beta$ is a \goodcoloring\ of $G$ using the colors in $\{1,2,\ldots,k\}$. Since $k\geq\chi(G)\geq 2s^{1+\frac{4}{g-4}}$, it follows from Lemma~\ref{lem:orientedtree} that $G$ contains every oriented tree on $s$ vertices as an induced subgraph.
\end{proof}
\subsection{Oriented trees in $\mathcal{B}_r$-free oriented graphs}

In this section, we will show a much stronger result than what is required to prove Theorem~\ref{thm:outtree}\ref{outtbr}, albeit for a slightly higher bound on the chromatic number. We show that if the chromatic number of an oriented $\mathcal{B}_r$-free graph is at least $(r-1)(s-1)(s-2)+2s+1$, then it contains every oriented tree on at most $s$ vertices as induced subgraph. This generalizes Theorem~\ref{thm:outtree}\ref{outtbr} from out-trees in directed acyclic graphs to arbitrarily oriented trees in general oriented graphs. Recall that if $A$ is an oriented graph and $\mathcal{F}$ a family of oriented graphs, then $\overline{f}(A,\mathcal{F})$ is the smallest positive integer
such that every $\mathcal{F}$-free oriented graph having chromatic number at least $\overline{f}(A,\mathcal{F})$ contains $A$ as an induced subgraph. Also recall that $\overline{f}(s,\mathcal{F})$ is the smallest positive integer such that every $\mathcal{F}$-free oriented graph having chromatic number at least $\overline{f}(s,\mathcal{F})$ contains every oriented tree on $s$ vertices as an induced subgraph. Thus, in this section, we prove that $\overline{f}(s,\mathcal{B}_r)\leq (r-1)(s-1)(s-2)+2s+1$.

For $s \ge 1$, let $q(s)$ be the smallest positive integer such that every $q(s)$-chromatic directed graph contains every oriented tree on $s$ vertices as a (not necessarily induced) subgraph. Burr~\cite{Burr} (Theorem~\ref{thm:burr}) proved that $q(s) \le (s-1)^2$, and conjectured that $q(s) = 2s-2$. Addario-Berry et al.~\cite{Addario} obtained the following improved bound for $q(s)$.

\begin{theorem} [Addario-Berry et al.~\cite{Addario}] \label{thm:addario}
$q(s) \le s^2/2 - s/2 + 1$.
\end{theorem}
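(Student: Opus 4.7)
The plan is to embed the given oriented tree $T$ on $s$ vertices into $G$ one vertex at a time, following a depth-first search ordering $w_1,w_2,\ldots,w_s$ of $V(T)$, as in the proofs of Lemma~\ref{lem:generallemma} and Lemma~\ref{lem:orientedtree}. Such an ordering has the useful property that for each $j$, the set $\{w_1,\ldots,w_j\}$ induces a subtree $T_j$ of $T$, and $w_{j+1}$ is attached at a unique vertex $w_{p(j+1)} \in V(T_j)$ with a prescribed orientation dictated by $T$. The task therefore reduces to a sequential embedding: having already found vertices $v_1,\ldots,v_j$ of $G$ corresponding to $w_1,\ldots,w_j$, we must find a neighbor $v_{j+1}$ of $v_{p(j+1)}$ in $V(G)\setminus\{v_1,\ldots,v_j\}$ with the correct direction.

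The main tool should be a proper coloring of $G$ that captures its directed structure. Two natural candidates are the rank colorings $c^+(v) = $ length of the longest directed path starting at $v$ and $c^-(v) = $ length of the longest directed path ending at $v$; each is a proper coloring of $G$ using $\chi(G) \geq \binom{s}{2}+1$ colors by the Gallai--Roy--Vitaver theorem. The former has the property that every vertex of color $k$ has an out-neighbor of each smaller color, while the latter has the analogous property for in-neighbors. To handle arbitrary oriented trees, one likely needs a coloring combining both---in the spirit of the parity-coloring used in Lemma~\ref{lem:orientedtree}---so that one can maintain the invariant that $c(v_1)>c(v_2)>\cdots>c(v_j)$ throughout the embedding. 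This invariant automatically ensures that the $v_i$ are distinct.

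The quantitative heart of the argument is a careful accounting of how many colors can be ``blocked'' at each extension step. A naive Burr-style argument reserves up to $s-1$ colors per already-embedded vertex and recovers the bound $(s-1)^2+1$. The improvement to $\binom{s}{2}+1$ should instead charge the cost to the edges rather than the vertices of $T_j$: when extending at step $j+1$, only $j-1$ colors may be forbidden, one per edge already placed. Summing $\sum_{j=1}^{s-1}(j-1)=\binom{s-1}{2}$ along with the $s$ colors used for the $v_i$'s themselves gives $\binom{s-1}{2}+s=\binom{s}{2}+1$. I expect the main difficulty to lie precisely in this edge-based charging: one must show that every ``bad'' extension---one where no suitable neighbor of $v_{p(j+1)}$ of the desired color exists---can be charged to a distinct previously placed edge of $T_j$, perhaps via a median-order argument on $G$ or an exchange/rerouting lemma that re-embeds an initial segment of the $v_i$'s to free the needed color. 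This amortization is what separates the tight bound from the quadratic one; without it the sequential embedding only yields Burr's bound.
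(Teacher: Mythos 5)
There is a genuine gap, and it sits exactly where you placed your hopes. Your proposal defers the entire content of the theorem to the ``edge-based charging'' step: the claim that at the $(j{+}1)$-st extension only $j-1$ colors can be blocked, ``one per edge already placed,'' is not argued at all --- the arithmetic $\binom{s-1}{2}+s=\binom{s}{2}+1$ is reverse-engineered from the bound you are trying to prove, and you yourself concede that without this amortization the sequential embedding only recovers Burr's $(s-1)^2$ (Theorem~\ref{thm:burr}). Saying the charging might be done ``perhaps via a median-order argument or an exchange/rerouting lemma'' names the difficulty rather than resolving it. There is also a second, independent obstruction: the coloring you want to run the invariant $c(v_1)>c(v_2)>\cdots>c(v_j)$ --- a single proper coloring in which every vertex sees prescribed smaller colors among its out-neighbours \emph{and} among its in-neighbours --- does not exist in an arbitrary digraph. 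The Gallai--Roy rank colorings $c^+$ and $c^-$ each give the property in one direction only, and the parity-coloring that combines both directions is constructed in this paper (Lemma~\ref{lem:orientedtree}) only for bikernel-perfect oriented graphs, precisely because kernels and antikernels need not exist in general. Since $q(s)$ quantifies over all digraphs, your framework cannot even be set up for an arbitrary oriented tree whose edges point both ways.

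For comparison, the argument of Addario-Berry et al.\ (whose ``most basic approach'' is the one adapted in Lemma~\ref{lem:oriented-gyarfas} and Theorem~\ref{thm:oriented_gyarfas}) uses no vertex coloring of this kind and no vertex-by-vertex DFS embedding. One inducts on $\st(T)$: delete \emph{all} out-leaves (or all in-leaves) of $T$ at once to get $T'$. Given a digraph $D$ of large chromatic number, the set $S$ of vertices of small out-degree spans a graph in which every subgraph has small average degree, hence small chromatic number (the role played by Lemma~\ref{lem:avgdegree} in the induced setting), so $D-S$ still has chromatic number large enough to contain $T'$ by induction, while every vertex of $D-S$ has out-degree large enough that the deleted out-leaves can be attached greedily, avoiding the at most $s-1$ vertices already used. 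Telescoping this recursion over the at most $s-1$ stages, with the worst case of a single leaf removed per stage, is what produces $s^2/2-s/2+1$. That leaf-peeling/degeneracy route is structurally different from your proposal, and it is where the quadratic-over-two bound actually comes from.
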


We need an analogous version of this result, but for induced subgraphs. But as noted in Section~\ref{sec:ourresults}, we cannot hope to have such a result even for triangle-free graphs, as $\overline{f}(T,C_3)$ does not exist even for the case when $T$ corresponds to certain orientations of a 4-path. We thus restrict ourselves to $\mathcal{B}_r$-free graphs. 
In their paper, Addario-Berry et al. describe  several approaches to prove Theorem~\ref{thm:addario}, with slight changes in the upper bounds for $q(s)$. We show that their most basic approach can be
adapted to get an induced version of the theorem for $\mathcal{B}_r$-free graphs; the other methods in~\cite{Addario} seem difficult to adapt for the induced version. 
\medskip

An undirected graph $G$ is said to be $k$-degenerate if every subgraph of $G$ has minimum degree at most $k$. It is a well-known and easy to see fact that the chromatic number of a $k$-degenerate graph is at most $k+1$. The following observation will be useful for us.
\begin{lemma}\label{lem:avgdegree}
Let $k>2$. If $G$ is a graph whose every subgraph has average degree at most $k$, then either $G$ contains a clique on $k+1$ vertices or $\chi(G)\leq k$.
\end{lemma}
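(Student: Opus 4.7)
The plan is to proceed by induction on $|V(G)|$, using the bounded average degree to find a vertex we can delete, and handling the degenerate ``regular'' case via Brooks' theorem. The base case $|V(G)| \le k$ is immediate since the chromatic number is bounded by the number of vertices.

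For the inductive step, observe that since every subgraph of $G$ has average degree at most $k$, the hypothesis is hereditary under vertex deletion: every subgraph of $G - v$ is also a subgraph of $G$, and clearly $G - v$ contains no $K_{k+1}$. Moreover, $G$ itself has average degree at most $k$, so the minimum degree of $G$ is at most $k$. Let $v$ be a vertex of minimum degree.

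If $\deg_G(v) < k$, then by the induction hypothesis $\chi(G - v) \le k$, and we can extend any such proper coloring to $v$ using one of the at least one color not appearing among its fewer than $k$ neighbors. Otherwise $\deg_G(v) = k$, which by minimality forces every vertex of $G$ to have degree exactly $k$, so $G$ is $k$-regular. In this case we apply Brooks' theorem to each connected component $G_i$: each is connected with maximum degree $k$, is not isomorphic to $K_{k+1}$ (as $G$ contains no such clique), and is not an odd cycle (since $k > 2$ rules out maximum degree $2$). Thus $\chi(G_i) \le k$ for each $i$, and hence $\chi(G) \le k$.

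The main subtlety, and the only step that requires genuine input beyond bookkeeping, is the treatment of the $k$-regular case, where the naive degeneracy argument would only give $\chi(G) \le k + 1$. This is exactly where Brooks' theorem (applied componentwise) is needed to shave off the extra color, and where the hypothesis $k > 2$ becomes essential to eliminate the odd cycle exception. Everything else reduces to the standard ``delete a low-degree vertex and color it last'' argument.
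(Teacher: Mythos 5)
Your proof is correct and follows essentially the same route as the paper: strip minimum-degree vertices (you phrase this as induction, the paper as reducing to a core of minimum degree $k$), use the average-degree hypothesis to force $k$-regularity, and finish with Brooks' theorem, with $k>2$ excluding the odd-cycle exception. Your componentwise application of Brooks' theorem is in fact slightly more careful than the paper's statement, which invokes it for the possibly disconnected core directly.
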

\begin{proof}
Suppose that $\chi(G)>k$. Then $G$ is not $(k-1)$-degenerate. From $G$, we repeatedly remove vertices of degree at most $k-1$ until we get a graph $G'$ whose minimum degree is $k$.
Notice that we will be done if we prove that $\chi(G')\leq k$ or $G'$ contains a clique on $k+1$ vertices.
From the fact that every subgraph of $G$ has average degree at most $k$, we know that the average degree of $G'$ is at most $k$. Since the minimum degree of $G'$ is $k$, it follows that the maximum degree of $G'$ is also $k$, or in other words, $G'$ is a $k$-regular graph.
This means that if $G'$ is a complete graph, then it is a complete graph on $k+1$ vertices, in which case we are done. As $k>2$, $G'$ is not an odd cycle, and therefore by Brooks' theorem, we have $\chi(G')\leq k$.
\end{proof}

A vertex $v$ is said to be an \emph{out-leaf} of an oriented tree $T$ if $d^-_T(v)=1$ and $d^+_T(v)=0$. Similarly, a vertex $v$ is said to be an \emph{in-leaf} of an oriented tree $T$ if $d^+_T(v)=1$ and $d^-_T(v)=0$. A vertex is said to be a \emph{leaf} of an oriented tree $T$ if it is either an out-leaf or an in-leaf of $T$.
It is easy to see that for $r\geq 2$, $\overline{f}(T,\mathcal{B}_r)=1$ when $|V(T)|=1$. The following lemma will help us in bounding $\overline{f}(T,\mathcal{B}_r)$ for arbitrary oriented trees $T$.

\newcommand{\val}{(r-1)(s-2)+t}

\begin {lemma} \label{lem:oriented-gyarfas}
Let $T$ be an oriented tree on $s$ vertices, where $s\geq 2$, and let $L$ be either the set of all out-leaves of $T$ or the set of all in-leaves of $T$. Let $t=|L|$ and $T'=T-L$.
Then for $r\geq 2$, $\overline{f}(T,\mathcal{B}_r)\leq \overline{f}(T',\mathcal{B}_r) + 2(\val)$.
\end {lemma}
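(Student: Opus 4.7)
The plan is to argue by strong induction on $s=|V(T)|$, with the base case $s=1$ being immediate. For the inductive step, I would treat the case where $L$ is the set of out-leaves of $T$; the in-leaves case should be handled analogously, with out-neighbourhoods and the relevant $\mathcal{B}_r$-freeness bounds replaced by their in-neighbourhood counterparts.

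Let $G$ be a $\mathcal{B}_r$-free oriented graph with $\chi(G)\geq \overline{f}(T',\mathcal{B}_r)+2k$, where $k=(r-1)(s-2)+t$. First, I would peel off the low-out-degree vertices: set $B=\{v\in V(G):d^+_G(v)<k\}$. Every induced subgraph of $G[B]$ has at most $(k-1)|V|$ edges, hence minimum undirected degree at most $2k-2$, so $G[B]$ is $(2k-2)$-degenerate and $\chi(G[B])\le 2k-1$. Subadditivity gives $\chi(G[V\setminus B])\ge \overline{f}(T',\mathcal{B}_r)+1$, so by the inductive hypothesis on $T'$ there is an induced copy $T^*=\{x_1,\ldots,x_{s-t}\}$ of $T'$ inside $G[V\setminus B]$ with $d^+_G(x_i)\ge k$ for every $i$. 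The second step is to extend $T^*$ to an induced copy of $T$ by attaching the $t$ out-leaves parent-by-parent: for each parent $v_i\in P$ with copy $x_i$, pick $d(v_i)$ out-leaves from the ``private'' set $M_i\subseteq N^+_G(x_i)$ consisting of out-neighbours avoiding $V(T^*)$, non-adjacent to every other $x_j$, and non-adjacent to every leaf chosen for a previously processed parent. The required pairwise non-adjacency of the $d(v_i)$ leaves attached to $v_i$ comes from $\mathcal{B}_r$-freeness: because $d^+_G(x_i)\ge k\ge r$, we have $|N(u)\cap N^+_G(x_i)|\le r-1$ for every $u\ne x_i$, so the induced subgraph on $N^+_G(x_i)$ has maximum degree at most $r-1$, and any $m$-subset of it contains an independent set of size at least $\lceil m/r\rceil$.

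The main obstacle is showing that $|M_i|$ is large enough for this independence-number bound to supply $d(v_i)$ candidates. A naive union bound on the exclusions is too weak---it would demand $t\ge d^+_T(v_i)$, which can fail. The resolution is to carefully account for the overlap between $V(T^*)\cap N^+_G(x_i)$ and $\bigcup_{j\ne i}N(x_j)\cap N^+_G(x_i)$: every $x_k\in V(T^*)\cap N^+_G(x_i)$ is adjacent in $T^*$ to each of its other $T'$-neighbours, and since $T'$ is a tree any two of its vertices share at most one common neighbour, so an inclusion--exclusion yields the refined estimate $|M_i|\ge k-(r-1)(s-1-d(v_i))-m_i$, where $m_i$ counts out-children of $v_i$ in $T'$ that are leaves of $\hat{T'}$. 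Unwinding the independence-number requirement then reduces the whole argument to the inequality $t\ge d(v_i)+m_i$. This last inequality holds because each leaf-out-child $v_k$ of $v_i$ in $\hat{T'}$ has $v_i$ as its unique $T'$-neighbour, so the remaining $T$-neighbours of $v_k$ all lie in $L$; since $v_k\notin L$ itself, these neighbours are out-leaves attached to $v_k$, forcing $v_k\in P$ with $d(v_k)\ge 1$. Summing $d(v_i)$ together with $d(v_k)\ge 1$ over the $m_i$ leaf-out-children gives $t\ge d(v_i)+m_i$, which closes the induction.
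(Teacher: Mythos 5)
Your proof is correct, and it shares the paper's overall skeleton (peel off vertices of out-degree less than $(r-1)(s-2)+t$, embed $T'$ in the remaining subgraph using the definition of $\overline f(T',\mathcal B_r)$, then extend by attaching the $t$ missing leaves), but the attachment phase is handled by a genuinely different scheme. The paper attaches leaves one at a time: at each step it bounds the total number of ``bad'' out-neighbours of the current parent's copy $u'$ (vertices of $N^+(u')$ that are in, or adjacent to, the partially built tree $F$) by $t+(r-1)(s-2)$, with a delicate split of $V(F)\setminus\{u'\}$ into leaf out-children, non-leaf out-children with a chosen witness, and the rest. You instead group the leaves by parent, estimate a ``private'' pool $M_i\subseteq N^+_G(x_i)$ via an inclusion--exclusion that charges each non-leaf out-child of $x_i$ against its own child's $(r-1)$-budget, and then extract the required $d(v_i)$ pairwise non-adjacent candidates from $M_i$ using the fact that $G[N^+(x_i)]$ has maximum degree at most $r-1$. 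Your route needs two extra ingredients the paper avoids---the estimate $|M_i|\ge k-(r-1)(s-1-d(v_i))-m_i$ and the tree-theoretic inequality $t\ge d(v_i)+m_i$---but in exchange it dispenses with Brooks' theorem (since you set the out-degree threshold at $<k$ rather than $\le k$, a bare degeneracy bound suffices for $\chi(G[B])\le 2k-1$). One small point worth tightening: the lemma is not really proved by induction on $s$; what you call ``the inductive hypothesis on $T'$'' is simply the definition of $\overline f(T',\mathcal B_r)$, and the genuine induction only enters in the theorem that iterates this lemma. Also, the $|M_i|$ estimate is asserted rather tersely; it is correct, but a written-up version should make explicit that each non-leaf out-child $x_k\in V(T^*)\cap N^+_G(x_i)$ is charged to one of its own $T^*$-children $y_k$, that these $y_k$ are pairwise distinct and disjoint from $A=V(T^*)\cap N^+_G(x_i)$, and that previously chosen leaves cannot lie in $N^+_G(x_i)$ at all (they were chosen non-adjacent to $x_i$), so only their $\le r-1$ neighbours in $N^+_G(x_i)$ need be excluded.
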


\begin{proof}
If $s=2$, then any oriented graph $D$ having $\chi(D)\geq 2$ contains an induced subgraph isomorphic to $T$, implying that $\overline{f}(T,\mathcal{B}_r)\leq 2$, and so the bound given in the statement of the lemma holds (since in this case, we have $t=1$). Therefore, from here onward, we assume that $s\geq 3$ (which implies that $t\geq 1$).
	
We assume that $L$ is a set of out-leaves of $T$; the case when $L$ is a set of in-leaves is symmetric. Let $D$ be any oriented graph with $\chi(D) \ge \overline{f}(T',\mathcal{B}_r)+2(\val)$. Define $S= \{ x \in V(D) : d^+_D(x) \le \val\}$.

We claim that $\chi(D[S]) \le 2(\val)$.
Indeed for any $S'\subseteq S$, each vertex in the graph $D[S']$ has out degree at most $\val$ and therefore the number of edges in $D[S']$ is at most $(\val)|S'|$. It follows that the average degree of $\hat{D}[S']$ is at most $2(\val)$. 
Thus every subgraph of $\hat{D}[S]$ has average degree at most $2(\val)$.
It is easy to see that $2(\val)>2$. Then by Lemma~\ref{lem:avgdegree}, we have that either $\chi(D[S])\leq 2(\val)$ or $D[S]$ contains a tournament on $2(\val)+1\geq 2r+1$ vertices. It follows that there exists a vertex having out-degree at least $r$ in this tournament, which implies that the tournament contains a subgraph isomorphic to a graph in $\mathcal{B}_r$. Since this is a contradiction, we can conclude that $\chi(D[S])\leq 2(\val)$.

From this, we can infer that $\chi(D') \ge  \overline{f}(T', \mathcal{B}_r)$, where $D' = D-S$. Then by the definition of $\overline{f}(T', \mathcal{B}_r)$, we have that $D'$ contains an induced subgraph that is isomorphic to $T'$.
Let $L=\{v_1,v_2,\ldots,v_t\}$.
For each $j\in\{1,2,\ldots,t\}$, we denote by $T_j$ the oriented tree $T[V(T')\cup\{v_1,v_2,\ldots,v_j\}]$. We let $T_0=T'$.
Let $i$ be the maximum integer such that there is an 
induced subgraph of $D$ that is isomorphic to $T_i$ having the property that the vertices in it corresponding to the vertices in $V(T_i-L)=V(T')$ are all from $V(D')$.
Since there is an induced subgraph of $D'$ that is isomorphic to $T'=T_0$, we know that $i$ exists. We shall prove that $i=t$, which will complete the proof. 

Suppose that $i<t$.
By the definition of $i$, we know that there is an induced subgraph $F$ of $D$ that is isomorphic to $T_i$ in which the vertices corresponding to $V(T')$ are all from $V(D')$.
Clearly, $|V(F)|<|V(T)|=s$.
Let $u$ be the only neighbour of $v_{i+1}$ in $T$ and let $u'$ be the vertex corresponding to $u$ in $F$. Clearly, $u\in V(T')$, and therefore $u'\in V(D')$.
This implies that $|N^+_D(u')|>\val$.
For each vertex $x\in V(F)\setminus\{u'\}$, define $W_x=N[x]\cap N^+(u')$.
For each $x\in V(F)\setminus N^+_{F}[u']$, $|W_x|\leq r-1$, as otherwise $r$ vertices from $W_x$ together with $x$ and $u'$ will form a subgraph of $D$ that is isomorphic to a graph in $\mathcal{B}_r$ (note that since $F$ is an induced subgraph of $D$, we have $x\notin W_x$ in this case).
Similarly, for each $x\in N^+_{F}(u')$, $|W_x|\leq r$, as otherwise $r$ vertices from $W_x\setminus\{x\}$ together with $x$ and $u'$ will form a subgraph of $D$ that is isomorphic to a graph in $\mathcal{B}_r$.
Let $Z\subseteq N^+_{F}(u')$ be the leaves of $F$ in $N^+_{F}(u')$. Since the vertices in $Z$ are all out-leaves of $F$, and $F$ has at most as many out-leaves as $T$, it follows that $|Z|\leq t$.
For every vertex $x\in N^+_{F}(u')\setminus Z$, let $y(x)$ be an arbitrarily chosen vertex in $N_{F}(x)\setminus\{u'\}$. Let $Y=\{y(x)\colon x\in N^+_{F}(u')\setminus Z\}$. Clearly, as $F$ is a tree that is induced in $D$, we have that for each $x\in N^+_{F}(u')\setminus Z$, $y(x)\notin N[u']$ and for any $x'\in N^+_{F}(u')\setminus Z$ that is distinct from $x$, we have $y(x)\neq y(x')$. Thus, we have $Y\cap N^+_{F}(u')=\emptyset$ and $|Y|=|N^+_{F}(u')\setminus Z|$. For each $x\in Y$, $|W_x\setminus\{y^{-1}(x)\}|\leq r-2$, since otherwise $r-1$ vertices from $W_x\setminus\{y^{-1}(x)\}$, $x$, $y^{-1}(x)$ and $u'$ will form a $\mathcal{B}_r$ in $D$.

The number of vertices in $N^+_D(u')$ that are either in $V(F)$ or are adjacent to a vertex in $V(F)$ other than $u'$ is at most
\begin{eqnarray*}
\left|\bigcup_{x\in V(F)\setminus\{u'\}} W_x\right|&\leq&\sum_{x\in N^+_{F}(u')} |W_x|+ \sum_{x\in Y} |W_x\setminus\{y^{-1}(x)\}|+\sum_{x\in V(F)\setminus (N^+_{F}[u']\cup Y)} |W_x|
\end{eqnarray*}
The second term comes from the fact that for every $x\in N^+_F(u')$, we have $x\in W_x$ and $x\in W_{y(x)}$. Substituting the upper bounds derived above, we get
\begin{eqnarray*}
\left|\bigcup_{x\in V(F)\setminus\{u'\}} W_x\right|&\leq&\sum_{x\in N^+_{F}(u')} r + \sum_{x\in Y} (r-2) + \sum_{x\in V(F)\setminus (N^+_{F}[u']\cup Y)} (r-1)\\
&=& \sum_{x\in N^+_{F}(u')\setminus Z} r+ \sum_{x\in Z} r + \sum_{x\in Y} (r-1) - |Y|+ \sum_{x\in V(F)\setminus (N^+_{F}[u']\cup Y)} (r-1)\\
\end{eqnarray*}
Since $|Y|=|N^+_F(u')|\setminus Z$, we can take the term $-|Y|$ inside the first term of the right hand side, which gives us
\begin{eqnarray*}
\left|\bigcup_{x\in V(F)\setminus\{u'\}} W_x\right|&\leq& \sum_{x\in N^+_{F}(u')\setminus Z} (r-1)+ \sum_{x\in Z} r + \sum_{x\in Y} (r-1) + \sum_{x\in V(F)\setminus (N^+_{F}[u']\cup Y)} (r-1)\\
&=&\sum_{x\in N^+_{F}(u')\setminus Z} (r-1) + |Z| + \sum_{x\in Z} (r-1) + \sum_{x\in Y} (r-1) + \sum_{x\in V(F)\setminus (N^+_{F}[u']\cup Y)} (r-1)\\
&=&|Z| + \sum_{x\in V(F)\setminus\{u'\}} (r-1)\\
&\leq&t+(r-1)(s-2)
\end{eqnarray*}

Since $|N^+_D(u')|>(r-1)(s-2)+t$, there exists a vertex $v'$ in $N^+_D(u')$ that is neither a vertex of $F$ nor adjacent to any vertex of $F$ other than $u'$. It can be seen that $V(F)\cup\{v'\}$ induces a subgraph of $D$ that is isomorphic to $T_{i+1}$ in which the vertices corresponding to $V(T')$ are all from $D'$. This contradicts our choice of $i$.
\end{proof}

As defined in~\cite{Addario}, for an oriented tree $T$, the parameter $\st(T)$ is defined as follows. Let $L_1$ be the set of out-leaves of $T$ and $L_2$ the set of in-leaves of $T$. Then $\st(T)=0$ if $|V(T)|=1$, and otherwise $\st(T)=1+\min\{\st(T-L_1),\st(T-L_2)\}$.

\begin{theorem}\label{thm:oriented_gyarfas}
For any oriented tree $T$ having $s$ vertices and $r\geq 2$,
$\overline{f}(T,\mathcal{B}_r) \le  (r-1)(2s-\st(T)-3)\st(T)+2s+1$.
\end{theorem}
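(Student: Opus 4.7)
The plan is to prove Theorem~\ref{thm:oriented_gyarfas} by induction on $\st(T)$, using Lemma~\ref{lem:oriented-gyarfas} as the recursive step. The base case is $\st(T)=0$, which by definition means $|V(T)|=1$, so $\overline{f}(T,\mathcal{B}_r)=1$, and this is trivially bounded above by $(r-1)(2-0-3)(0)+2+1=3$, confirming the claimed inequality at $s=1$.

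For the inductive step, assume the theorem holds for every oriented tree with $\st$-value strictly less than $k:=\st(T)\geq 1$. Let $T$ be an oriented tree on $s\geq 2$ vertices. By the definition of $\st$, there is a choice $L\in\{L_1,L_2\}$ (out-leaves or in-leaves) which is nonempty and such that $T':=T-L$ satisfies $\st(T')=k-1$. Set $t=|L|\geq 1$, so $T'$ has $s-t$ vertices. Lemma~\ref{lem:oriented-gyarfas} gives
\[
\overline{f}(T,\mathcal{B}_r) \;\leq\; \overline{f}(T',\mathcal{B}_r) + 2\bigl((r-1)(s-2)+t\bigr),
\]
and the inductive hypothesis applied to $T'$ yields
\[
\overline{f}(T',\mathcal{B}_r) \;\leq\; (r-1)\bigl(2(s-t)-(k-1)-3\bigr)(k-1) + 2(s-t)+1.
\]
Adding these and absorbing $2(s-t)+2t=2s$, the task reduces to verifying the purely algebraic inequality
\[
(2s-2t-k-2)(k-1) + 2(s-2) \;\leq\; (2s-k-3)k,
\]
after which the result follows.

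The main obstacle is making sure this final inequality really holds in all cases where the induction is being applied, i.e., for $k\geq 1$ and $t\geq 1$. Expanding and simplifying the difference between the two sides gives
\[
(2s-k-3)k - \bigl[(2s-2t-k-2)(k-1)+2(s-2)\bigr] \;=\; 2(k-1)(t-1),
\]
which is non-negative precisely because $k\geq 1$ and $t\geq 1$. Note that equality is attained whenever $k=1$ or $t=1$, so the bound is tight with respect to how the recursion is applied; the slack in the original Lemma~\ref{lem:oriented-gyarfas} step is what absorbs the looseness in small base cases like $s=2$. Once this arithmetic is laid out, the induction closes without further complication, establishing that $\overline{f}(T,\mathcal{B}_r)\leq (r-1)(2s-\st(T)-3)\st(T)+2s+1$ as claimed.
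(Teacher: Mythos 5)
Your proposal is correct and follows essentially the same route as the paper: induction on $\st(T)$, removing the set of out-leaves or in-leaves that decreases $\st$ by one, and applying Lemma~\ref{lem:oriented-gyarfas} together with the inductive hypothesis. The only difference is presentational---you package the paper's chain of estimates (which likewise uses $t\geq 1$ and $\st(T)\geq 1$) into the single verified identity that the slack equals $2(k-1)(t-1)\geq 0$, which is a cleaner way to organize the same arithmetic.
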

\begin{proof}  
We prove this by induction on $\st(T)$. If $\st(T)=0$, then $|V(T)|=1$, implying that $\overline{f}(T,\mathcal{B}_r)=1$, and therefore the upper bound given by the theorem holds.
So we shall assume that $\st(T)\geq 1$. Let $T'$ be the tree obtained by removing all the out-leaves or all the in-leaves of $T$ so that $\st(T')=\st(T)-1$.
Let $t$ denote the number of out-leaves or in-leaves that were removed from $T$ to obtain $T'$. Then $|V(T')|=s-t$.
By the induction hypothesis, we have $\overline{f}(T',\mathcal{B}_r)\leq (r-1) (2|V(T')| - \st(T') -3) \st(T') + 2|V(T')| + 1 =(r-1)(2(s-t)-\st(T)-2)(\st(T)-1) + 2(s-t) + 1$. By Lemma~\ref {lem:oriented-gyarfas}, we now have
\begin{eqnarray*}
\overline{f}(T,\mathcal{B}_r)&\leq&\overline{f}(T',\mathcal{B}_r)+2((r-1)(s-2)+t)\\
&\leq&(r-1) (2s-2t-\st(T)-2)(\st(T)-1)+  2(s-t) +1 +2((r-1)(s-2)+t)\\
&=&(r-1)(2s-2t-\st(T)-2)\st(T)-(r-1)(2s-2t-\st(T)-2)+2s+1-2t +2(r-1)(s-2) +2t\\
&=&(r-1)(2s-\st(T)-2)\st(T)-(r-1)(2s+2\st(T)t-2t-\st(T)-2)+2s+1+2(r-1)(s-2)\\
&=&(r-1)(2s-\st(T)-2)\st(T)-(r-1)(2s+2\st(T)t-2t-\st(T)-2-2s+4)+2s+1\\
&=&(r-1)(2s-\st(T)-2)\st(T)-(r-1)(2\st(T)t-2t-\st(T)+2)+2s+1\\
&=&(r-1)(2s-\st(T)-2)\st(T)-(r-1)(\st(T)(2t-1)-2t+2)+2s+1\\
&=&(r-1)(2s-\st(T)-2-2t+1)\st(T)+(r-1)(2t-2)+2s+1\\
&=&(r-1)(2s-\st(T)-2t-1)\st(T)+(r-1)(2t-2)+2s+1\\
&\leq&(r-1)(2s-\st(T)-2t-1)\st(T)+(r-1)(2t-2)\st(T)+2s+1\\
&=&(r-1)(2s-\st(T)-2t-1+2t-2)\st(T)+2s+1\\
&=&(r-1)(2s-\st(T)-3)\st(T)+2s+1
\end{eqnarray*}
\end{proof}
\medskip

Note that the upper bound given in Theorem~\ref{thm:oriented_gyarfas} is maximized for a tree $T$ such that $\st(T)\in\{|V(T)|-1,|V(T)|-2\}$. We thus get the following result.

\orientedgstthm*

\bibliographystyle{plain}
\bibliography{References}

\end{document}